\def\II{{\mathbb I}}
\def\ZZ{{\mathbb Z}}
\def\NN{{\mathbb N}}
\def\RR{{\mathbb R}}
\def\IId{{\mathbb I}^d}
\def\NNd{{\mathbb N}^d}
\def\RRd{{\mathbb R}^d}
\def\NNn{{\mathbb N}_{-1}}
\def\NNdn{{\mathbb N}^d_{-1}}
\def\Ss{{\mathcal S}}
\def\Uas{\mathring{U}^{\alpha,d}_\infty}
\def\Lad{H^{\alpha}_{\infty}(\II^d)}
\def\supp{\operatorname{supp}}
\newtheorem{theorem}{Theorem}[section]
\newtheorem{lemma}[theorem]{Lemma}
\numberwithin{equation}{section}
\theoremstyle{definition}
\newtheorem{definition}[theorem]{Definition}
\theoremstyle{remark}
\newcommand*{\bigs}[1]{\scalebox{1.02}{\ensuremath#1}}
\newcommand{\Chi}{\raise .3ex
	\hbox{\large $\chi$}}
\newcommand{\R}{\mathbb{R}}
\newcommand{\N}{\mathbb{N}}
\algrenewcommand\algorithmicrequire{\makebox[46pt][l]{\textrm{required:}}}
\algrenewcommand\algorithmicensure{\makebox[46pt][l]{\textrm{output:}}}
\algrenewcommand\algorithmicfunction{\textrm{function}}
\algrenewcommand\algorithmicwhile{\textrm{while}}
\algrenewcommand\algorithmicdo{}
\algrenewcommand\algorithmicend{\textrm{end}}
\algrenewcommand\algorithmicforall{\textrm{for all}}
\algrenewcommand\algorithmicfor{\textrm{for}}
\algrenewcommand\algorithmicrepeat{\textrm{repeat}}
\algrenewcommand\algorithmicuntil{\textrm{until}}
\algrenewcommand\algorithmicif{\textrm{if}}
\algrenewcommand\algorithmicthen{\textrm{then}}
\algrenewcommand\algorithmicelse{\textrm{else}}
\newcommand{\bb}{{\boldsymbol{b}}}
\newcommand{\bee}{{\boldsymbol{e}}}
\newcommand{\bk}{{\boldsymbol{k}}}
\newcommand{\bh}{{\boldsymbol{h}}}
\newcommand{\bs}{{\boldsymbol{s}}}
\newcommand{\bx}{{\boldsymbol{x}}}
\newcommand{\by}{{\boldsymbol{y}}}
\newcommand{\bz}{{\boldsymbol{z}}}
\newcommand{\bW}{{\boldsymbol{W}}}
\newcommand{\bN}{{\boldsymbol{N}}}
\newcommand{\bell}{{\boldsymbol{\ell}}}
\newcommand{\balpha}{{\boldsymbol{\alpha}}}
\newcommand{\bone}{{\boldsymbol{1}}}
\newcommand{\be}{\begin{equation}}
\newcommand{\ee}{\end{equation}}
\newcommand{\beq}{\begin{eqnarray}}
\newcommand{\beqq}{\begin{eqnarray*}}
\newcommand{\eeq}{\end{eqnarray}}
\newcommand{\eeqq}{\end{eqnarray*}}
\title{Computation complexity of deep ReLU  neural networks in high-dimensional approximation}
\author[a]{Dinh D\~ung}
\affil[a]{Vietnam National University, Hanoi, Information Technology Institute
	\protect\\
	144 Xuan Thuy, Cau Giay, Hanoi, Vietnam
	\protect\\
	Email: dinhzung@gmail.com
}
\author[b]{Van Kien Nguyen
	\footnote{Corresponding author: University of Transport and Communications, No.3 Cau Giay Street, Lang Thuong Ward, Dong Da District,
		Hanoi, Vietnam
		\protect\\
		{\it Email address}: kiennv@utc.edu.vn}
}
\affil[b]{Faculty of Basic Sciences, University of Transport and Communications
	\protect\\	No.3 Cau Giay Street, Lang Thuong Ward, Dong Da District,
	Hanoi, Vietnam
	\protect\\
	Email: kiennv@utc.edu.vn
}
\author[c]{Mai Xuan Thao}
\affil[c]{Department of Natural Sciences, Hong Duc University
	\protect\\
	565 Quang Trung, Thanh Hoa, Vietnam
	\protect\\ 
	Email: maixuanthao@hdu.edu.vn
}
\date{\today}
\begin{document}
	
\maketitle

\begin{abstract}
The purpose of the present paper is to study the  computation complexity of deep ReLU neural networks to approximate functions in H\"older-Nikol'skii spaces of mixed smoothness $\Lad$ on the unit cube $\IId:=[0,1]^d$. In this context, for any function $f\in \Lad$, we explicitly construct  nonadaptive and adaptive deep ReLU neural networks having an output that  approximates $f$  with a prescribed accuracy $\varepsilon$, and prove dimension-dependent bounds for the computation complexity of this approximation, characterized by  the  size and the depth of  this deep ReLU neural network, explicitly in $d$ and $\varepsilon$.  Our results show the advantage of the adaptive method of approximation by deep ReLU neural networks over nonadaptive one.

\medskip
\noindent
{\bf Keywords and Phrases:} Deep ReLU neural network; computation complexity; high-dimensional approximation;  H\"older-Nikol'skii space of  mixed smoothness.

\medskip
\noindent
%{\bf Mathematics Subject Classifications (2020)}
  
\end{abstract}

\section{Introduction}	

 Neural networks have been studied and used for more than 70 years, dating back to the foundational work of Hebb \cite{Heb49B} and of Rosenblatt \cite{Ros58}. In recent years, deep neural networks have been successfully applied to a striking variety of Machine Learning problems, including computer vision \cite{KSH12}, natural language processing \cite{WSC.16}, speech recognition and image classification \cite{LBH15}.  There has been a number of interesting papers that address the role of depth and architecture of deep neural networks in approximating sets of functions  which have a very special  regularity properties such as analytic functions \cite{EWa18,Mha96}, differentiable functions \cite{PeVo18,Ya17a}, oscillatory functions \cite{GPEB19}, functions in isotropic Sobolev or Besov spaces \cite{AlNo20,DDF.19,GKNV19,GKP20,Ya17b}, functions with dominating mixed smoothness \cite{MoDu19,Suzu18}  or in approximating solutions to partial differential equations \cite{GPR.20,OPS19, ScZe19}, to mention just a few. The main advantage of deep neural networks in approximation functions is that they can output compositions of functions cheaply and consequently improve the convergence rate of approximation error, see \cite{DDF.19, EWa18, Ya17a}. We refer the reader to recent surveys \cite{GPEB19, Pet20} for concept and results in deep neural network approximation theory.

The purpose of the present paper is to study  the  computation complexity of deep ReLU neural networks for approximation of H\"older-Nikol'skii functions having  mixed smoothness on the unit cube $\IId:=[0,1]^d$. 
Let us introduce the space $\Lad$ of our interest. For univariate functions $f$ on $\II:=[0,1]$, the difference operator $\Delta_h$ is defined by 
\begin{equation*}
\Delta_h f(x) := \
f(x + h) -  f(x),
\end{equation*}
for all $x$ and $h \ge 0$ such that $x, x + h \in \II$. 
If $u$ is a subset of $\{1,\ldots, d\}$, for multivariate functions $f$ on $\IId$
the  mixed  difference operator $\Delta_{\bh,u}$ is defined by 
\begin{equation*}
\Delta_{\bh,u} := \
\prod_{i \in u} \Delta_{h_i}, \quad \Delta_{\bh,\varnothing} = {\rm Id},
\end{equation*}
for all $\bx=(x_1,\ldots,x_d)$ and $\bh=(h_1,\ldots,h_d)$ such that $\bx, \bx +\bh \in \IId$. Here the univariate operator
$\Delta_{h_i}$ is applied to the univariate function $f$ by considering $f$ as a 
function of  variable $x_i$ with the other variables held fixed. 
If $0 < \alpha \le 1$, 
we introduce the semi-norm 
$|f|_{H^{\alpha}_{\infty}(u)}$ for functions $f \in C(\IId)$ by
\begin{equation} \nonumber
|f|_{H^{\alpha}_{\infty}(u)}:= \
\sup_{\bh > 0} \ \prod_{i \in u} h_i^{-\alpha}\|\Delta_{\bh,u}(f)\|_{C(\IId(\bh,u))}
\end{equation}
(in particular, $|f|_{H^{\alpha}_{\infty}(\varnothing)}= \|f\|_{C(\IId)}$), where 
$\IId(\bh,u):= \{\bx \in \IId: \, x_i + h_i \in \II, \, i \in u\}$.
The  H\"older-Nikol'skii space 
$\Lad$ of mixed smoothness $\alpha$ then is defined as the set of  functions $f \in C(\IId)$ 
for which the  norm 
\begin{equation*} 
\|f\|_{\Lad}
:= \ 
\max_{u \subset \{1,\ldots,d\}} |f|_{H^{\alpha}_{\infty}(u)}
\end{equation*}
is finite. 
 From the definition we have that $\Lad \subset C(\IId)$. Denote by  $\mathring{C}(\IId)$ the set of all functions $f \in C(\IId)$ vanishing on the boundary $\partial  \IId$ of $\IId$, i.e., the set of all functions $f \in C(\IId)$ such that  $f(x) =0$ if  $x_j=0$ or $x_j=1$  for some index $j \in \{1,\ldots,d\}$.
Denote by  $\Uas$ the set of all functions $f$ in the intersection 
$\mathring{H}^{\alpha}_\infty(\IId):=\Lad \cap \mathring{C}(\IId)$  such that $\|f\|_{\Lad} \le 1$. 

 Let us mention that function spaces having mixed smoothness appear naturally in many models of real world problem in mathematical physics, finance and other fields. For instance, in a recent work on regularity properties of solutions of the electronic Schr\"odinger equation, Yserentant \cite{Yser10}   has shown that the eigenfunctions of the electronic  Schr\"odinger operator have a certain mixed smoothness. 
Triebel \cite[Chapter 6]{Tri15B} has indicated a relation between 
Faber bases and sampling recovery in the context of  spaces with mixed smoothness and 
solutions of Navier-Stokes equations. In particular, when initial data  belongs to spaces with mixed
smoothness,  Navier-Stokes equations admit a unique  solution.
In mathematical finance, many problems are expressed as the expectation of some payoff function depending on quantities, such as stock prices, which are solutions of stochastic equations governed by Brownian motions.  The payoff function normally has kinks and jumps and belongs to a very high dimensional space. To approximate the expected value one can apply preliminary integration method with respect to a single well chosen variable to obtain a function of $d-1$ variables which  belongs to appropriate mixed Sobolev spaces in which Quasi-Monte Carlo can be applied efficiently, see \cite{GKLS18} and references therein. For a survey on various aspects of  high-dimensional approximation of functions having a mixed smoothness we refer the reader to the  book  \cite{DTU18B}.

In approximation theory, modern problems driven by   a lot of applications in  Information Technology, Mathematical Finance, Chemistry, Quantum Mechanics, Meteorology, and, in particular, in Uncertainty Quantification and Deep Machine Learning are being formulated in very high dimensions. Many times, numerical methods for such problems may demand computational cost increasing exponentially in dimension when the accuracy increases and as a consequence the method becomes  intractable when the dimension of input data is large. This phenomenon is called ``curse of dimensionality". 
Hence, the problem of estimating dimension-dependent  error  in high-dimensional approximation problems  arises naturally. Hyperbolic crosses and sparse grids promise to rid the ``curse of dimensionality" in some  problems when  high-dimensional data belongs to certain classes of functions having mixed smoothness. Approximation methods and sampling algorithms for functions having mixed smoothness constructed on  hyperbolic crosses and  sparse grids   give a surprising effect since hyperbolic crosses and sparse grids have the number of elements much less than those of standard domains and grids but give the same approximation error. This essentially reduces the computational cost, and therefore makes the problem tractable.

It is also observed that using deep neural networks to approximate functions from spaces of mixed smoothness can lessen or break the curse of dimensionality, see \cite{DN20arXiv,Suzu18}. There is a closed relation between approximating by deep ReLU networks and B-spline interpolation and quasi-interpolation representation, in particular by piecewise linear functions \cite{DDF.19,MoDu19,ScZe19,Suzu18,Ya17a,Ya17b}. Despite its simplicity, in deep neural networks, activation function Rectified Linear Unit (ReLU) $\max(0,x)$ is the most commonly used for its efficiency.

In the recent paper \cite{DN20arXiv}, we have studied  the approximation by  deep  ReLU neural networks, of functions from the H\"older-Zygmund space of mixed smoothness defined on the unit cube $\IId$ when the dimension $d$ may be very large.	
The approximation error  is measured in the norm of the   isotropic Sobolev space.  For any function $f$ from this space, we explicitly constructed a deep ReLU neural network having an output that  approximates $f$  with a prescribed accuracy $\varepsilon$, and proved tight dimension-dependent  estimates of the computation complexity of this approximation, characterized as  the size and  depth of  this deep ReLU neural network, explicitly in $d$ and $\varepsilon$. 

As a continuation of this paper  the present paper investigates  
nonadaptive and adaptive high-dimensional approximation  by  deep ReLU neural networks for functions from the classes  $\Uas$.
 The approximation error  is measured in the norm of $L_\infty(\IId)$. In this context, we pay attention on  the computation complexity of the deep ReLU networks, characterized by  the  size and depth of  this deep ReLU neural network, explicitly in $d$ and tolerance $\varepsilon$. A key tool for explicit construction of approximation methods by deep ReLU networks for functions in $\Lad$  is truncations of tensorized Faber series.

The main contribution of the present paper is as follows. 

Based on the decomposition of continuous functions by tensorized Faber series, for any $f\in \Uas$ we explicitly construct a deep ReLU neural network $\Phi_\varepsilon(f)$ having  the output  that approximates $f$ in  the $L_\infty(\IId)$-norm with a prescribed accuracy $\varepsilon$ and having  computation complexity expressing the dimension-dependent  size  
	\begin{equation} \label{eq-weight-01}
	W(\Phi_\varepsilon(f))\leq C_\alpha d   \bigg( \frac{K_1^d}{(d-1)!}\bigg)^{\frac{1}{\alpha}+1} \varepsilon^{-\frac{1}{\alpha}} \log(2\varepsilon^{-1})^{(d-1)(\frac{1}{\alpha}+1)+1}, 
\end{equation}
where $K_1=B^{1/(\alpha+1)}4\alpha^{-1} $ with $B=(2^\alpha-1)^{-1}$. The idea in proving the above result is to use  truncation  of Faber series  $R_n(f)$ as a intermediate approximation. Precisely,  we first approximate function $f\in \Uas$ by $R_n(f)$ and then approximate $R_n(f)$ by a deep ReLU network. 

The advantage of this method is that the deep ReLU neural networks are easily constructed and they have the same architecture for all functions in $\Uas$ , i.e., it is nonadaptive. However,
since this method  uses $R_n(f)$ as an intermediate approximation, a disadvantage of  it is that the computation complexity of deep ReLU networks is  not better than that when approximating functions  $f \in \Uas$ by the linear method $R_n(f)$.

To overcome this disadvantage we  develop a technique  used in \cite{Ya17b} and \cite{DDF.19} for  the univariate case. By this, we first represent    the difference $f- {R}_n(f)$ in a special form and then approximate terms in this  representation by deep ReLU networks. We emphasize that extension of technique in \cite{Ya17b} and \cite{DDF.19} to multivariate case and mixed smoothness is non-trivial task since one needs to construct a set of finite cardinality to approximate functions in  $\Uas$. For any $f\in \Uas$ we explicitly construct a deep ReLU neural network $\Phi_\varepsilon(f)$ of adaptive architecture having  the output  that approximates $f$ in  the $L_\infty(\IId)$-norm with a prescribed accuracy $\varepsilon$ and having   a size  estimated by 
\begin{equation} \label{eq-weight-02}
	W(\Phi_\varepsilon(f))\leq C_{\alpha} d^2\bigg(\frac{K_2^d}{(d-1)!}\bigg)^{\frac{2}{\alpha}+2} \varepsilon^{-
		\frac{1}{\alpha}}    \big(\log(2\varepsilon^{-1})\log  \log (2\varepsilon^{-1})\big)^{(1+\frac{1}{\alpha})(d-1)},
\end{equation}
where $K_2=4(2^{\alpha+3}B)^{\frac{1}{2\alpha+2}} (\alpha^{-1}\log(2\alpha^{-1}) )^{1/2}$. Comparing \eqref{eq-weight-01} and \eqref{eq-weight-02} we find the later estimation improves $\log (2\varepsilon^{-1})$.  Notice that the terms in  right-hand side of both \eqref{eq-weight-01} and \eqref{eq-weight-02} which depend on  dimension $d$ only decay as fast as super exponential in $d$.

\noindent
The outline of this  paper is as follows. In Section \ref{sec-dnn} we introduce necessary definitions and elementary facts on deep ReLU neural networks. Section \ref{sec-fabersystem} is devoted to recall a decomposition of continuous functions on the unit cube $\IId$ by Faber system  and approximation of functions  $f\in \Uas$ by truncations of Faber series $R_n(f)$ as well as by sets of finite cardinality. In  Section \ref{sec-linear-method} we explicitly construct  nonadaptive deep ReLU neural networks that approximate functions in $\Uas$ and prove  upper estimates for  size and the depth required. Section \ref{sec-adaptive-method} presents an improvement for approximation by adaptive deep ReLU neural networks of the results obtained in Section \ref{sec-linear-method}.  In Section \ref{sec-app} we give an application of our results in numerical approximation of solutions to elliptic partial differential equations. Conclusions are given in Section \ref{sec:conclusion}.

\noindent
{\bf Notation.} \ As usual, $\NN$ is the natural numbers, $\ZZ$ is the integers, $\RR$ is  the real numbers and $ \NN_0:= \{s \in \ZZ: s \ge 0 \}$; $\NNn= \NN_0\cup \{ -1\} $. 
The letter $d$ is  reserved for
the underlying dimension of $\RR^d$, $\NN^d$, etc. If $x\in \RR$, $\lfloor x\rfloor$ is defined to be the largest integer no larger than $x$. Vectorial quantities are denoted by boldface
letters and  $x_i$ denotes the $i$th coordinate 
of $\bx \in \RR^d$, i.e., $\bx := (x_1,\ldots, x_d)$. For $\bx\in \RR^d$, we denote 
	$|\bx|_p := \big(|x_1|^p+\ldots+|x_d|^p\big)^{1/p}$ if $0<p<\infty$, $|\bx|_\infty := \max_{j=1,\ldots,d}|x_j|$ and $\supp(\bx)=\{j: x_j\not=0\}$. For $\bx, \by \in \RR^d$,  $\bx \by $ denotes
the  Euclidean inner product of $\bx, \by$, and
$2^\bx := (2^{x_1},\ldots,2^{x_d})$. For $\bk, \bs \in \NNd_0$,  we denote $2^{-\bk}\bs := (2^{-k_1}s_1,\ldots,2^{-k_d}s_d)$.  We use the abbreviation: $L_\infty:= L_\infty(\IId)$ and $\|\cdot\|_\infty:= \|\cdot\|_{L_\infty}$.
Universal constants or constants depending on parameters $\alpha, d$ are denoted by $C$ or $C_{\alpha,d}$, respectively.  Values of constants $C$ and  $C_{\alpha,d}$ in general, are not specified except the case when they are precisely given, and may be different in various places.   $|A|$ denotes the cardinality of the finite set $|A|$.
\section{Deep ReLU neural networks}\label{sec-dnn}
In this section we introduce necessary definitions and elementary facts on deep ReLU neural networks.
There is a wide variety of neural network architectures and each of them is adapted to specific tasks. We only consider feed-forward  deep ReLU neural networks  for which  only connections between
neighboring layers are allowed. 

\begin{definition}\label{def:DNN}
	Let $d,L\in \NN$, $L\geq 2$, $N_0=d$,  and $N_1,\ldots,N_{L}\in \NN$. Let $\bW^\ell=(w^\ell_{i,j})$, $\ell=1,\ldots,L$, be $N_\ell\times N_{\ell-1}$ matrix, and $\bb^\ell =(b^\ell_j)\in \RR^{N_\ell}$.  
\begin{itemize}
	\item   A  neural network  $\Phi$ with input dimension $d$ and $L$ layers
		% with input dimension $d$ and $L$ layers  
		is  a sequence of matrix-vector tuples
		$$
		\Phi=\big((\bW^1,\bb^1),\ldots,(\bW^L,\bb^L) \big).
		$$
	We will use the following terminology.
	\begin{itemize}
		\item The number of layers $L(\Phi)=L$  is the depth of $\Phi$;
		\item $N_w(\Phi)=\max_{\ell=0,\ldots,L}\{ N_\ell\}$ is the width of $\Phi$; $\bN(\Phi)=(N_0,N_1,\ldots,N_L)$ the dimension of $\Phi$;
		\item The real numbers $w^\ell_{i,j}$ and $b^\ell_j$ are  edge and node weights of  $\Phi$, respectively;
		\item The number of nonzero weights  $w^\ell_{i,j}$ and $b^\ell_j$  is the  size of $\Phi$ and denoted by $W(\Phi)$;
		\item When $L(\Phi) \ge 3$, $\Phi$ is called a deep neural network, and otherwise, a shallow neural network.
	\end{itemize}
\item A neural network  architecture $\mathbb{A}$ with input dimension $d$ and $L$ layers is a neural network 
$$\mathbb{A}=\big((\bW^1,\bb^1),\ldots,(\bW^L,\bb^L) \big),$$
where elements of $\bW^\ell$ and $\bb^\ell$, $\ell=1,\ldots,L$, are in $\{0,1\}$. 
\end{itemize}
\end{definition}

The above defined networks are sometimes called standard networks to distinguish with networks  allowing for connections of neurons in non-neighboring layers. A deep neural network can be visualized in a graph. 
The graph associated with a  deep neural network $\Phi$ defined in Definition \ref{def:DNN}  consists of  $L+1$ layers which are numbered from $0$ to $L$. The $\ell$th layer has $N_\ell$ nodes which are numbered from 1 to $N_\ell$. If $w^\ell_{i,j}\not =0$, then there is an edge connecting the node $j$ in the layer $\ell-1$ to the node $i$  in the layer $\ell$. In Figure \ref{fig:neuralnetwork} we illustrate a deep neural network with input dimension 3 and 5 layers.

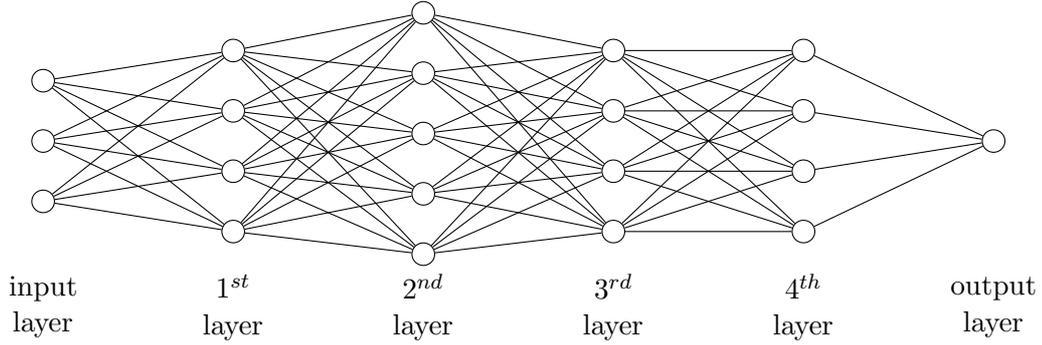
\begin{figure} 
	\begin{center}
		\begin{tikzpicture}  
		\tikzstyle{place}=[circle, draw=black,scale=1, inner sep=3pt,minimum size=1pt, align=center]
		
		% Input
		\foreach \x in {1,...,3}
		\draw node at (0, -\x*0.8+1.6) [place] (input_\x) { $$};
		
		% Hidden 1
		\foreach \x in {1,...,4}
		\node at (2.5, -\x*0.8+ 2.0) [place] (hidden1_\x){$$};

		% Input -> Hidden 1
	\foreach \i in {1,...,3}
\foreach \j in {1,...,4}
\draw [-] (input_\i) to (hidden1_\j);

		% Hidden 2
		\foreach \x in {1,...,5}
		\node at (5, -\x*0.8+2.5) [place] (hidden2_\x){$$};

% Hidden 1-> Hidden 2
\foreach \i in {1,...,4}
\foreach \j in {1,...,5}
\draw [-] (hidden1_\i) to (hidden2_\j);
		
		%%%%%%%%%%%%%%%%%%%%%%%%%%%%%%%%%
		% Hidden l1
		\foreach \x in {1,...,4}
		\node at (7.5, -\x*0.8+2.0) [place] (hiddenl1_\x){$$};

% Hidden 2-> Hidden l1
\foreach \i in {1,...,5}
\foreach \j in {1,...,4}
\draw [-] (hidden2_\i) to (hiddenl1_\j);

		% Hidden l
		\foreach \x in {1,...,4}
		\node at (10, -\x*0.8+2.0) [place] (hiddenl_\x){$$};

		% Hidden l1-> Hidden l
\foreach \i in {1,...,4}
\foreach \j in {1,...,4}
\draw [-] (hiddenl1_\i) to (hiddenl_\j);

		% Output
	\foreach \x in {1}
\node at (12.5, 0) [place] (output_\x){$$};

		% Hidden l-> Output
		\foreach \i in {1,...,4}
		\foreach \j in {1}
		\draw [-] (hiddenl_\i) to (output_\j);

		% Text
		\node at (0, -2.2) [align=center]{input \\ layer};
		\node at (2.5, -2.2) [align=center] {$1^{st}$\\ layer};
		\node at (5, -2.2) [align=center] {$2^{nd}$\\ layer};
		\node at (7.5, -2.2) [align=center] {$3^{rd}$\\ layer};
		\node at (10, -2.2) [align=center] {$4^{th}$\\ layer};
		\node at (12.5, -2.2) [align=center] {output\\ layer};

%		\node at (0, 2.2) [align=center]{$\bx\in \RR^3$};
%		\node at (2.5, 2.2) [align=center] {$\bz^1\in \RR^{4}$};
%		\node at (5, 2.2) [align=center] {$\bz^2\in \RR^{5}$};
%		\node at (7.5, 2.2) [align=center] {$\bz^{3} \in \RR^{4}$};
%		\node at (10, 2.2) [align=center] {$\bz^{4} \in \RR^{4} $};
%		\node at (12.5, 2.2) [align=center] {$\bz^{5}\in \RR$};	
		
		\end{tikzpicture}
		\caption{The graph associated to a deep neural network with input dimension 3 and 5 layers}
		\label{fig:neuralnetwork}
	\end{center}
\end{figure}

\begin{definition} 
	Given $L\in \N
	$, $L\geq 2$, and a deep neural network  architecture  $\mathbb{A}= \big((\overline{\bW}^1,\overline{\bb}^1), \ldots,\allowbreak (\overline{\bW}^L,\overline{\bb}^L) \big)$. We say that a neural network  $\Phi=\big((\bW^1,\bb^1),\ldots,(\bW^L,\bb^L) \big)$  has architecture $\mathbb{A}$ if 
	\begin{itemize}
		\item  $\bN(\Phi)=\bN(\mathbb{A})$
		\item   $\overline{w}^\ell_{i,j}=0$ implies $w^\ell_{i,j}=0$, $\overline{b}^\ell_i=0$ implies $b^\ell_i=0$ for all $i=1,\ldots,N_\ell$, $j=1,\ldots,N_{\ell-1}$, and $\ell=1,\ldots, L$.  Here $\overline{w}^\ell_{i,j}$ are entries of $\overline{\bW}^\ell$ and $\overline{b}^\ell_i$ are elements of $\overline{\bb}^\ell$, $\ell=1,\ldots,L$.
	\end{itemize}

	For a given deep neural network $\Phi=\big((\bW^1,\bb^1),\ldots,(\bW^L,\bb^L) \big)$, there exists  
	a unique deep neural network  architecture $\mathbb{A}= \big((\overline{\bW}^1,\overline{\bb}^1),\ldots,(\overline{\bW}^L,\overline{\bb}^L) \big)$ such that
	\begin{itemize}
		\item  $\bN(\Phi)=\bN(\mathbb{A})$
		\item   $\overline{w}^\ell_{i,j}=0$ $\Longleftrightarrow$ $w^\ell_{i,j}=0$, $\overline{b}^\ell_i=0$ $\Longleftrightarrow$ $b^\ell_i=0$ for all $i=1,\ldots,N_\ell$, $j=1,\ldots,N_{\ell-1}$, and $\ell=1,\ldots, L$.  
	\end{itemize}
	We call this architecture $\mathbb{A}$ the minimal architecture of $\Phi$ (this definition is proper in the sense that any architecture of $\Phi$ is also an architecture of $\mathbb{A}$.)
\end{definition}
{ A deep neural network is   associated with an activation function which calculates output at each node. The choice of activation function depends on the problem under consideration.}  In this paper we focus our attention on ReLU activation function defined by 
$\sigma(t):= \max\{t,0\}, t\in \R$.  We will use the notation 
$\sigma(\bx):= (\sigma(x_1),\ldots, \sigma(x_d))$ for $\bx \in \RRd$.
\begin{definition}\label{def:ReLu-network}
	A deep ReLU neural network   with input dimension $d$ and $L$ layers is a neural network  
	$$\Phi=\big((\bW^1,\bb^1),\ldots,(\bW^L,\bb^L) \big)$$
	in which the following
	computation scheme is implemented
	\begin{align*}
		\bz^0&: = \bx \in \RR^d,
		\\
		\bz^\ell &: = \sigma(\bW^{\ell}\bz^{\ell-1}+\bb^\ell), \ \ \ell=1,\ldots,L-1,
		\\
		\bz^L&:= \bW^L\bz^{L-1} + \bb^L.
	\end{align*}
	We call $\bz^0$ the input  and with an 
	ambiguity denote
	$\Phi(\bx):= \bz^L$ the output of $\Phi$  and in some places we identify a deep ReLU neural network  with its output.
\end{definition}

Several deep ReLU neural networks can be combined  to form a larger  deep ReLU neural network whose output is a linear combination or composition of outputs of  sub-networks. In the following, we introduce parallelization, concatenation and special construction.
\begin{lemma}[Parallelization]\label{lem:parallel}
	Let $N\in \NN$, $\Omega\subset \RR^d$ be a bounded set, $\lambda_j\in \RR$, $j=1,\ldots,N$. Let $\Phi_j$, $j=1,\ldots,N$ be deep ReLU neural networks with input dimension $d$. Then  we can explicitly construct a deep ReLU neural network  denoted by $\Phi$ so that 
	$$
	\Phi(\bx)
	=
	\sum_{j=1}^N\lambda_j \Phi_j(\bx),\quad \bx\in \Omega,
	$$ 
	with  $L(\Phi)=\max_{j=1,\ldots,N}\{L(\Phi_j)\}$ and 
	$$
	 W(\Phi)=\sum_{j=1}^NW(\Phi_j)+\sum_{j: L(\Phi_j)<L(\Phi)}(L(\Phi)-L(\Phi_j)+2)\leq 3N \max_{j=1,\ldots,N}{W(\Phi_j)}.
	$$ The network $\Phi$ is called the Parallelization network of $\Phi_j$, $j=1,\ldots,N$.
\end{lemma}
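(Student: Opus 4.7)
The plan is to realize $\Phi$ by running the $N$ sub-networks in parallel on the common input $\bx$ and combining their outputs linearly at the final layer. Two points need care: synchronizing all sub-networks to the common depth $L := \max_j L(\Phi_j)$, and keeping the nonzero-weight count precisely at the value stated.

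For the parallelization proper, since the input $\bx$ is shared, the first weight matrix of $\Phi$ is obtained by vertically concatenating $\bW^1_1,\ldots,\bW^1_N$ and the corresponding biases. For internal layers $\ell=2,\ldots,L-1$, once depths have been synchronized, the weight matrices are block-diagonal with the (possibly padded) $\ell$-th weight matrices of the sub-networks as diagonal blocks; the final linear layer multiplies the sub-network outputs by $\lambda_1,\ldots,\lambda_N$ and sums them to produce $\sum_j \lambda_j \Phi_j(\bx)$. This automatically gives $L(\Phi)=L$.

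For depth synchronization, each $\Phi_j$ with $L_j := L(\Phi_j) < L$ is extended by $L - L_j$ additional layers while preserving its scalar output $y_j = \Phi_j(\bx)$. The natural device is the ReLU identity $t = \sigma(t) - \sigma(-t)$: at level $L_j$ produce the non-negative pair $(\sigma(y_j),\sigma(-y_j))$ by adding a single sign-inverted companion of the last row of $\Phi_j$; carry this pair through levels $L_j+1,\ldots,L-1$ using identity edges (which act as identity on non-negative inputs and cost one weight per carried scalar per layer); and at level $L$ take the difference to recover $y_j$. A careful bookkeeping of this padding, which reuses the existing last-layer weights of $\Phi_j$ rather than duplicating them, yields exactly $L - L_j + 2$ extra nonzero weights per padded sub-network.

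Summing the contributions of all sub-networks gives the stated equality for $W(\Phi)$. The coarse bound $W(\Phi) \le 3N \max_j W(\Phi_j)$ then follows from $L - L_j + 2 \le L + 2 \le 2 W(\Phi_{j^\ast})$, where $j^\ast$ attains $L_{j^\ast}=L$ and satisfies $L \le W(\Phi_{j^\ast})$ under the natural assumption that each layer carries at least one nonzero weight. The principal technical obstacle is the tight count in step three: obtaining exactly $L - L_j + 2$ rather than a loose $O(L - L_j)$ bound requires the specific reuse of the existing last-layer weights described above, and carefully ensuring that identity propagation of a non-negative scalar through a fresh hidden layer costs only one nonzero weight.
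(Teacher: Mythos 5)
The paper does not prove this lemma itself (it cites \cite{DN20arXiv}), but your padding step contains a genuine gap, and it shows up precisely in the weight count you are trying to make exact. The device $t=\sigma(t)-\sigma(-t)$ forces you to create, at level $L_j$, a \emph{second} output node whose incoming row is the sign-inverted copy of the last row of $\bW^{L_j}$. A row of a weight matrix cannot be ``reused'' by another node: the companion row has exactly as many nonzero entries as the original last row, so this step alone costs $\|\bW^{L_j}_{\mathrm{last}}\|_0$ (plus possibly a bias) additional nonzero weights, not $O(1)$. Moreover you must then carry \emph{two} non-negative scalars through the padded layers, at two weights per layer, and recombine them with two edges at the end. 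The total overhead is roughly $\|\bW^{L_j}_{\mathrm{last}}\|_0+2(L-L_j)+O(1)$, which is neither the claimed exact value $L-L_j+2$ nor, after summing over $j$, guaranteed to stay below $3N\max_j W(\Phi_j)$. Your closing remark that the construction ``reuses the existing last-layer weights rather than duplicating them'' is internally inconsistent with the $\sigma(t)-\sigma(-t)$ mechanism you describe.

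The missing idea is the hypothesis you never invoke: $\Omega$ is \emph{bounded}. Since each $\Phi_j$ is continuous and piecewise linear, $\Phi_j(\Omega)$ is bounded, say $|\Phi_j(\bx)|\le M_j$ on $\Omega$. One then pads by replacing the output layer of $\Phi_j$ with the single hidden node $\sigma\bigl(\bW^{L_j}\bz^{L_j-1}+b^{L_j}+M_j\bigr)=\Phi_j(\bx)+M_j$ (at most one extra nonzero weight, the modified bias), carries this \emph{single} non-negative scalar through layers $L_j+1,\dots,L-1$ with one identity edge each ($L-L_j-1$ weights), and in the final affine layer multiplies by $\lambda_j$ and subtracts $\lambda_j M_j$ (two more weights). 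This yields exactly $L-L_j+2$ extra nonzero weights per padded sub-network, and the final bound follows as you indicate from $L-L_j+2\le L\le \max_j W(\Phi_j)$. With this replacement for your step three, the rest of your block-diagonal construction is the standard one and is correct.
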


A proof of Lemma \ref{lem:parallel} can be found in \cite{DN20arXiv}. The last estimate in Lemma \ref{lem:parallel} is due to $2\leq L \leq \max_{j=1,\ldots,N}{W(\Phi_j)}$.

Another way to construct a ReLU network whose output is a linear combination of outputs of other ReLU networks is to use special networks. A special deep ReLU neural network with  input dimension $d$ can be defined as follows.  In each hidden layer a special role is reserved for $d$ first (top) nodes and the last (bottom) node. Concatenation of top $d$ nodes and the bottom node in each layer to the corresponding nodes in the next layer form $d+1$ parallel channels. The nodes in these $d+1$ channel are free of activation. The top $d$ parallel channels are called the source channels and just carry $\bx=(x_1,\ldots,x_d)$ forward.  The bottom channel is called collation channel. The nodes in the bottom channel are used to collect intermediate outputs by addition. This channel never feeds forward into subsequent calculation, it only accepts previous calculations. It has been shown in \cite{DN20arXiv} that if  $\Phi$ is a special deep ReLU neural network with input dimension $d$ depth $L$ and $\bx \in \IId$,  then there is a deep ReLU neural network  $\Phi'$ such that  
	\begin{equation}\label{eq-special-l}
		L(\Phi') = L(\Phi) 
	\end{equation}
	and $\Phi'(\bx) = \Phi(\bx)$. In view of the proof of \cite[Lemma 4.2]{DN20arXiv} we find only node weights in the collation channel of $\Phi$ and $\Phi'$ are  different. Therefore we deduce 
	\begin{equation}\label{eq-special-w}
		W(\Phi')\leq W(\Phi)+L(\Phi) \leq 2W(\Phi).
	\end{equation}

\begin{lemma}[Special Construction]\label{lem:special}
	Let $N\in \NN$, $\Omega\subset \RR^d$ be a bounded set, $\lambda_j\in \RR$, $j=1,\ldots,N$. Let $\Phi_j$, $j=1,\ldots,N$ be deep ReLU neural networks with input dimension $d$. Then  we can explicitly construct a deep special ReLU neural network  denoted by $\Phi$ so that 
	$$
	\Phi(\bx)
	=
	\sum_{j=1}^N\lambda_j \Phi_j(\bx),\quad \bx\in \Omega,
	$$ 
	with  $L(\Phi)=\sum_{j=1}^NL(\Phi_j)$ and $W(\Phi)\leq \sum_{j=1}^NW(\Phi_j)+(d+1)L(\Phi) $. 
\end{lemma}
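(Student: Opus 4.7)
The plan is to realize $\Phi$ as a single special deep ReLU network in which the $N$ sub-networks $\Phi_1,\ldots,\Phi_N$ are arranged serially along the depth direction, while the $d$ source channels carry the input $\bx$ forward and the single collation channel accumulates the partial sums $\sum_{k\le j} \lambda_k \Phi_k(\bx)$ as $j$ grows from $1$ to $N$. The overall depth is forced to be $L:=\sum_{j=1}^N L(\Phi_j)$ because the sub-networks are stacked rather than executed in parallel, and this is exactly the depth claimed in the statement.

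Concretely, I would partition the layers $1,\ldots,L$ of $\Phi$ into $N$ consecutive blocks $B_1,\ldots,B_N$ with $|B_j|=L(\Phi_j)$. In block $B_j$, I embed the computation of $\Phi_j$ into the ``middle'' nodes of those layers (the nodes between the source channels at the top and the collation channel at the bottom), using the original weight matrices and bias vectors of $\Phi_j$. The input to this embedded copy of $\Phi_j$ is read off from the $d$ source channels at the layer immediately preceding $B_j$, which by construction carry the unchanged input $\bx$. At the last layer of $B_j$, which carries no activation in $\Phi_j$'s output, I add a single edge of weight $\lambda_j$ from the output node of $\Phi_j$ into the collation channel node of the same layer, and keep the collation channel weight $1$ otherwise. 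The source channels are propagated by the identity from one layer to the next through an edge of weight $1$ on each of the $d$ channels; since these channels are declared free of activation in the special-network formalism, the value $\bx$ is preserved exactly. The final output node reads the collation channel, giving $\sum_{j=1}^N \lambda_j \Phi_j(\bx)$ as required.

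For the size bound, the nonzero edge and node weights split into three groups: (i) those internal to the embedded copies of the $\Phi_j$'s, contributing $\sum_{j=1}^N W(\Phi_j)$; (ii) at most one nonzero weight per layer per source channel to propagate $\bx$, contributing at most $d\cdot L$; and (iii) at most one nonzero weight per layer in the collation channel (either the identity carry or the $\lambda_j$ injection at the end of a block), contributing at most $L$. Summing yields $W(\Phi)\le \sum_{j=1}^N W(\Phi_j)+(d+1)L(\Phi)$, as claimed.

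The main obstacle is not the arithmetic but the bookkeeping at the interfaces between consecutive blocks $B_j$ and $B_{j+1}$: I must make sure that the pre-activation output of $\Phi_j$ (which is produced without applying $\sigma$ in the last layer of $B_j$) is simultaneously (a) fed, via the weight $\lambda_j$, into the collation channel, and (b) discarded afterwards so it does not interfere with the embedding of $\Phi_{j+1}$ in the next block, whose input must be the clean $\bx$ coming from the source channels. This is precisely the role of the special-network architecture: by reserving the top $d$ nodes and the bottom node of every layer for the source and collation channels respectively, the computations of different $\Phi_j$ blocks occupy disjoint ``middle'' node slots and cannot cross-contaminate. Verifying this disjointness, together with checking that the source channels and the collation channel are correctly carried through all $L$ layers, completes the proof.
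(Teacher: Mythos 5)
Your construction is exactly the one the paper intends (the paper gives no formal proof of this lemma, only the description of special networks and the illustration in Figure 2): the $\Phi_j$ are stacked serially in depth, the $d$ source channels carry $\bx$ forward, and the collation channel accumulates the partial sums, which forces $L(\Phi)=\sum_j L(\Phi_j)$ and the $(d+1)L(\Phi)$ overhead. The one mechanical point to tighten is the block boundary: rather than keeping $\Phi_j$'s output as a separate middle node of the last layer of $B_j$ and adding ``an edge to the collation node of the same layer'' (a middle node would be ReLU-activated, so it would carry $\sigma(\Phi_j(\bx))$ rather than $\Phi_j(\bx)$, and the extra injection edge on top of the carry edge would cost $N$ additional weights beyond your stated count of one per layer), you should fold $\Phi_j$'s affine output layer directly into the activation-free collation node at layer $\sum_{k\le j}L(\Phi_k)$ with its weights multiplied by $\lambda_j$; those edges are then already accounted for in $W(\Phi_j)$, the collation channel contributes at most one carry edge per layer transition, and the bound $W(\Phi)\le\sum_j W(\Phi_j)+(d+1)L(\Phi)$ follows as you computed.
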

An illustration of a special network $\Phi$ whose output is a linear combination of network  $\Phi_j$, $j=1,\ldots,N$  is given in Figure \ref{fig:concatenation}.
 
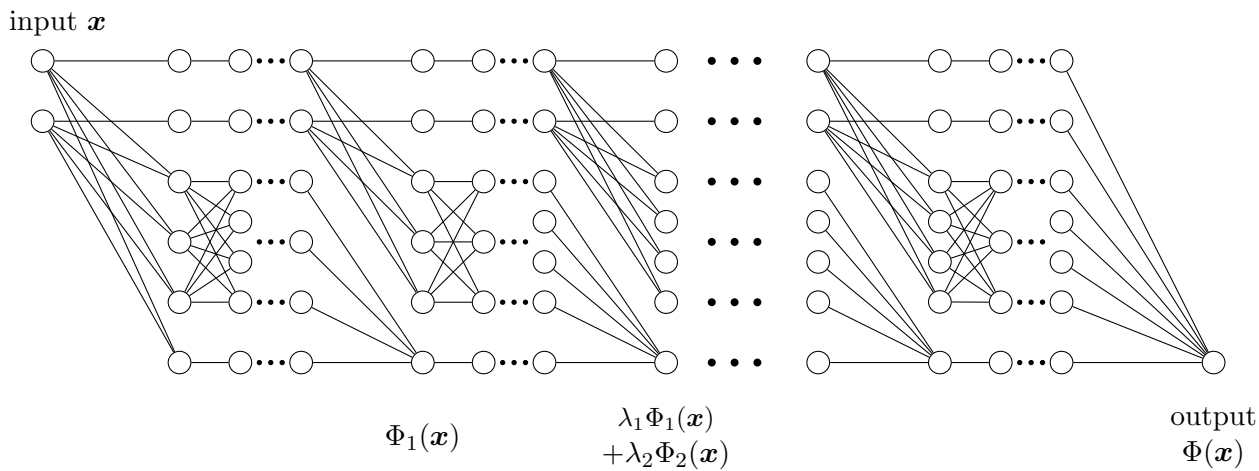
\begin{figure}
\begin{center}
\begin{tikzpicture}
\tikzstyle{place}=[circle, draw=black,scale=1, inner sep=3pt,minimum size=1pt, align=center]

% Input
\foreach \x in {1,...,2}
\draw node at (-0.2, -\x*0.8 +0.8) [place] (input_\x) { $$};
\node at (0, 0.5) {input $\bx$};

% Hidden 1
\foreach \x in {1,...,2}
\node at (1.6, -\x*0.8 +0.8) [place] (hiddenx1_\x){$$};

\foreach \x in {1,...,3}
\node at (1.6, -1.6 -\x*0.8+0.8) [place] (hiddenks1_\x){$$};

\node at (1.6, -4.0) [place] (hiddencol1){$$};

% Hidden 15
\foreach \x in {1,...,2}
\node at (2.4, -\x*0.8 +0.8) [place] (hiddenx15_\x){$$};

\foreach \x in {1,...,4}
\node at (2.4, -1.6 -\x*0.533+0.533) [place] (hiddenks15_\x){$$};

\node at (2.4, -4.0) [place] (hiddencol15){$$};

\foreach \x in {1,...,3}
\fill (2.5+\x*0.15, -0.0) circle (1pt);
\foreach \x in {1,...,3}
\fill (2.5+\x*0.15, -0.8) circle (1pt);
\foreach \x in {1,...,3}
\fill (2.5+\x*0.15, -1.6) circle (1pt);
\foreach \x in {1,...,3}
\fill (2.5+\x*0.15, -2.4) circle (1pt);
\foreach \x in {1,...,3}
\fill (2.5+\x*0.15, -3.2) circle (1pt);
\foreach \x in {1,...,3}
\fill (2.5+\x*0.15, -4.0) circle (1pt);

% Hidden 2
\foreach \x in {1,...,2}
\node at (3.2,  -\x*0.8 +0.8) [place] (hiddenx2_\x){$$};

\foreach \x in {1,...,3}
\node at (3.2, -1.6 -\x*0.8+0.8) [place] (hiddenks2_\x){$$};

\node at (3.2, -4.0) [place] (hiddencol2){$$};

% Hidden 3
\foreach \x in {1,...,2}
\node at (4.8, -\x*0.8 +0.8) [place] (hiddenx3_\x){$$};
\foreach \x in {1,...,3}
\node at (4.8, -1.6 -\x*0.8+0.8) [place] (hiddenks3_\x){$$};
\node at (4.8, -4.0) [place] (hiddencol3){$$};
\node at (4.8, -5.0)  {$\Phi_{1}(\bx)$};

% Hidden 35
\foreach \x in {1,...,2}
\node at (5.6,  -\x*0.8 +0.8) [place] (hiddenx35_\x){$$};
\foreach \x in {1,...,3}
\node at (5.6, -1.6 -\x*0.8+0.8) [place] (hiddenks35_\x){$$};
\node at (5.6, -4.0) [place] (hiddencol35){$$};

\foreach \x in {1,...,3}
\fill (5.7+\x*0.15, -0.0) circle (1pt);
\foreach \x in {1,...,3}
\fill (5.7+\x*0.15, -0.8) circle (1pt);
\foreach \x in {1,...,3}
\fill (5.7+\x*0.15, -1.6) circle (1pt);
\foreach \x in {1,...,3}
\fill (5.7+\x*0.15, -2.4) circle (1pt);
\foreach \x in {1,...,3}
\fill (5.7+\x*0.15, -3.2) circle (1pt);
\foreach \x in {1,...,3}
\fill (5.7+\x*0.15, -4.0) circle (1pt);

% Hidden 4
\foreach \x in {1,...,2}
\node at (6.4,  -\x*0.8 +0.8) [place] (hiddenx4_\x){$$};
\foreach \x in {1,...,4}
\node at (6.4, -1.6 -\x*0.533+0.533) [place] (hiddenks4_\x){$$};
\node at (6.4, -4.0) [place] (hiddencol4){$$};

% Hidden 5
\foreach \x in {1,...,2}
\node at (8.0,  -\x*0.8 +0.8) [place] (hiddenx5_\x){$$};
\foreach \x in {1,...,4}
\node at (8.0, -1.6 -\x*0.533+0.533) [place] (hiddenks5_\x){$$};
\node at (8.0, -4.0) [place] (hiddencol5){$$};
\node at (8.0, -5.0) [align=center] {$\small \lambda_1\Phi_{1}(\bx)$
\\
$+ \lambda_2\Phi_{2}(\bx)$};
%dots

\foreach \x in {1,...,3}
\fill (8.3+\x*0.3, -0.0) circle (1.5pt);
\foreach \x in {1,...,3}
\fill (8.3+\x*0.3, -0.8) circle (1.5pt);
\foreach \x in {1,...,3}
\fill (8.3+\x*0.3, -1.6) circle (1.5pt);
\foreach \x in {1,...,3}
\fill (8.3+\x*0.3, -2.4) circle (1.5pt);
\foreach \x in {1,...,3}
\fill (8.3+\x*0.3, -3.2) circle (1.5pt);
\foreach \x in {1,...,3}
\fill (8.3+\x*0.3, -4.0) circle (1.5pt);

% Hidden 6
\foreach \x in {1,...,2}
\node at (10.0, -\x*0.8 +0.8) [place] (hiddenx6_\x){$$};
\foreach \x in {1,...,4}
\node at (10.0, -1.6 -\x*0.533+0.533) [place] (hiddenks6_\x){$$};
\node at (10.0, -4.0) [place] (hiddencol6){$$};

% Hidden 7
\foreach \x in {1,...,2}
\node at (11.6, -\x*0.8 +0.8) [place] (hiddenx7_\x){$$};
\foreach \x in {1,...,4}
\node at (11.6, -1.6 -\x*0.532+0.532) [place] (hiddenks7_\x){$$};
\node at (11.6, -4.0) [place] (hiddencol7){$$};

% Hidden 75
\foreach \x in {1,...,2}
\node at (12.4, -\x*0.8 +0.8) [place] (hiddenx75_\x){$$};
\foreach \x in {1,...,3}
\node at (12.4, -1.6 -\x*0.8+0.8) [place] (hiddenks75_\x){$$};
\node at (12.4, -4.0) [place] (hiddencol75){$$};
\foreach \x in {1,...,3}
\fill (12.5+\x*0.15, -0.0) circle (1pt);
\foreach \x in {1,...,3}
\fill (12.5+\x*0.15, -0.8) circle (1pt);
\foreach \x in {1,...,3}
\fill (12.5+\x*0.15, -1.6) circle (1pt);
\foreach \x in {1,...,3}
\fill (12.5+\x*0.15, -2.4) circle (1pt);
\foreach \x in {1,...,3}
\fill (12.5+\x*0.15, -3.2) circle (1pt);
\foreach \x in {1,...,3}
\fill (12.5+\x*0.15, -4.0) circle (1pt);

% Hidden 8
\foreach \x in {1,...,2}
\node at (13.2, -\x*0.8 +0.8) [place] (hiddenx8_\x){$$};
\foreach \x in {1,...,4}
\node at (13.2, -1.6 -\x*0.533+0.533) [place] (hiddenks8_\x){$$};
\node at (13.2, -4.0) [place] (hiddencol8){$$};

% Output
\node at (15.2, -4.0)  [place] (output){$$};
\node at (15.2, -5.0) [align=center] {output
\\
$\Phi(\bx)$
};
%%%%%%%%%%%%%%%%%%%%%%%%%%%%%%%%%
% input to hidden1

\foreach \i in {1,...,2}
\foreach \j in {1,...,3}
\draw [-] (input_\i) to (hiddenks1_\j);
\foreach \i in {1,...,2}
\draw [-] (input_\i) to (hiddencol1);
%\draw [-] (hidden1_m) to (hidden2_m);
\foreach \i in {1,...,2}
\draw [-] (input_\i) to (hiddenx1_\i);

%%%%%%%%%%%%%%%%%%%%%%%%%%%%%%%%%%%%%%%%%
% hidden1 to hidden15

\foreach \i in {1,...,2}
\draw [-] (hiddenx1_\i) to (hiddenx15_\i);

\foreach \i in {1,...,3}
\foreach \j in {1,...,4}
\draw [-] (hiddenks1_\i) to (hiddenks15_\j);
\draw [-] (hiddencol1) to (hiddencol15);

%%%%%%%%%%%%%%%%%%%%%%%%%%%5

% hidden 2 to hidden 3

\foreach \i in {1,...,2}
\draw [-] (hiddenx2_\i) to (hiddenx3_\i);
\foreach \i in {1,...,2}
\foreach \j in {1,...,3}
\draw [-] (hiddenx2_\i) to (hiddenks3_\j);

\foreach \j in {1,...,3}
\draw [-] (hiddenks2_\j) to (hiddencol3);
\draw [-] (hiddencol2) to (hiddencol3);

%%%%%%%%%%%%%%%%%%%%%%%%%%%%%%%%%%%%%%
% hidden 3 to hidden 35

\foreach \i in {1,...,2}
\draw [-] (hiddenx3_\i) to (hiddenx35_\i);
\foreach \i in {1,...,3}
\foreach \j in {1,...,3}
\draw [-] (hiddenks3_\i) to (hiddenks35_\j);

\draw [-] (hiddencol3) to (hiddencol35);

%%%%%%%%%%%%%%%%%%%%%%%%%%%%%%%%%%%%%%55

% hidden 4 to hidden 5

\foreach \i in {1,...,2}
\draw [-] (hiddenx4_\i) to (hiddenx5_\i);
\foreach \i in {1,...,2}
\foreach \j in {1,...,4}
\draw [-] (hiddenx4_\i) to (hiddenks5_\j);

\foreach \j in {1,...,4}
\draw [-] (hiddenks4_\j) to (hiddencol5);
\draw [-] (hiddencol4) to (hiddencol5);

%%%%%%%%%%%%%%%%%%%%%%%%%%%5

% hidden 6 to hidden 7

\foreach \i in {1,...,2}
\draw [-] (hiddenx6_\i) to (hiddenx7_\i);
\foreach \i in {1,...,2}
\foreach \j in {1,...,4}
\draw [-] (hiddenx6_\i) to (hiddenks7_\j);

\foreach \j in {1,...,4}
\draw [-] (hiddenks6_\j) to (hiddencol7);
\draw [-] (hiddencol6) to (hiddencol7);

%%%%%%%%%%%%%%%%%%%%%%%%%%%%%%%%%%%%%%
% hidden 7 to hidden 75

\foreach \i in {1,...,2}
\draw [-] (hiddenx7_\i) to (hiddenx75_\i);
\foreach \i in {1,...,4}
\foreach \j in {1,...,3}
\draw [-] (hiddenks7_\i) to (hiddenks75_\j);

\draw [-] (hiddencol7) to (hiddencol75);
%%%%%%%%%%%%%%%%%%%%%%%%%%%%%%%%%%%%%%

% Hidden 8 to output

\foreach \i in {1,...,2}
\draw [-] (hiddenx8_\i) to (output);
\foreach \j in {1,...,4}
\draw [-] (hiddenks8_\j) to (output);

\draw [-] (hiddencol8) to (output);
\end{tikzpicture}
\caption{Illustration of a special deep ReLU neural network ($d=2$)}
\label{fig:concatenation}
\end{center}
\end{figure}

%-------------------------------------------------------------------
%
\begin{lemma}[Concatenation]\label{lem-concatenation} Let $\Phi_1$ and $\Phi_2$ be two ReLU neural networks   such that output layer of $\Phi_1$ has the same dimension as input layer of $\Phi_2$. Then, we can explicitly construct a ReLU neural network $\Phi$ such that $\Phi(\bx)=\Phi_2(\Phi_1(\bx))$ for $\bx\in \RR^d$. Moreover we have
	$L(\Phi) = L(\Phi_1)+L(\Phi_2)$ and $W(\Phi)\leq 2W(\Phi_1)+2W(\Phi_2)$. 
\end{lemma}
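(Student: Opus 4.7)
My plan is to construct $\Phi$ by stacking the layers of $\Phi_1$ on top of the layers of $\Phi_2$, bridging them via the identity $x = \sigma(x) - \sigma(-x)$, which lets the affine output of $\Phi_1$ be ``re-activated'' and then consumed by the first hidden layer of $\Phi_2$. A direct merge of the last layer of $\Phi_1$ with the first layer of $\Phi_2$ would save one layer and therefore only produce depth $L_1+L_2-1$, and it could also blow up the number of nonzero weights through the matrix product $\bW^1_2\bW^{L_1}_1$; the bridging trick is what allows us to hit the exact depth $L_1+L_2$ and the claimed size bound simultaneously.

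Writing $\Phi_i = ((\bW^1_i, \bb^1_i), \ldots, (\bW^{L_i}_i, \bb^{L_i}_i))$ for $i=1,2$, I would take layers $1, \ldots, L_1-1$ of $\Phi$ directly from $\Phi_1$. The $L_1$-th layer of $\Phi$ uses the block matrix obtained by stacking $\bW^{L_1}_1$ on top of $-\bW^{L_1}_1$, with bias obtained by stacking $\bb^{L_1}_1$ on top of $-\bb^{L_1}_1$; after the $\sigma$ nonlinearity this layer outputs the coordinate-wise positive and negative parts of the pre-activation $\bW^{L_1}_1 \bz^{L_1-1}+\bb^{L_1}_1 = \Phi_1(\bx)$. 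The $(L_1+1)$-st layer of $\Phi$ uses the block-row weight $(\bW^1_2,\, -\bW^1_2)$ with bias $\bb^1_2$, so that after $\sigma$ it reproduces exactly $\sigma(\bW^1_2\,\Phi_1(\bx)+\bb^1_2)$, i.e.\ the output of the first hidden layer of $\Phi_2$ evaluated at $\Phi_1(\bx)$. The remaining layers $L_1+2, \ldots, L_1+L_2$ of $\Phi$ are the layers $2, \ldots, L_2$ of $\Phi_2$ copied verbatim. Correctness then follows by unrolling Definition \ref{def:ReLu-network} and using $\sigma(t)-\sigma(-t)=t$ coordinate-wise.

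The depth is $(L_1-1)+1+1+(L_2-1)=L_1+L_2$ by construction. For the size bound, denote by $W^\ell_i$ the number of nonzero weights in the $\ell$-th layer of $\Phi_i$. The new $L_1$-th layer of $\Phi$ has at most $2W^{L_1}_1$ nonzero weights, since both the weight matrix and the bias are doubled by the sign flip; the new $(L_1+1)$-st layer has at most $2W^1_2$ nonzero weights, since the weight matrix is doubled in width while the bias is unchanged. The remaining layers contribute $W(\Phi_1)-W^{L_1}_1$ and $W(\Phi_2)-W^1_2$, so
\[ W(\Phi) \leq \bigl(W(\Phi_1)-W^{L_1}_1\bigr) + 2W^{L_1}_1 + 2W^1_2 + \bigl(W(\Phi_2)-W^1_2\bigr) = W(\Phi_1)+W^{L_1}_1+W^1_2+W(\Phi_2) \leq 2W(\Phi_1)+2W(\Phi_2), \]
as claimed.

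The main obstacle, which the bridging trick is designed to overcome, is the mismatch between the affine-only last layer of $\Phi_1$ and the $\sigma$-activated first layer of $\Phi_2$: one must insert structure that keeps the $\sigma$ pattern of a standard network intact while neither merging the two affine maps into one layer (which would violate the depth count) nor multiplying out $\bW^1_2\bW^{L_1}_1$ (which would violate the size count). Once this is handled, the only delicate verification is the coordinate-wise identity $\sigma(t)-\sigma(-t)=t$; everything else is bookkeeping on sparsity patterns and layer indices.
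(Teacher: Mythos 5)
Your construction is correct and is exactly the standard ``sparse concatenation'' argument that the paper relies on by citing \cite{PeVo18}: bridge the affine output of $\Phi_1$ to the first hidden layer of $\Phi_2$ via $t=\sigma(t)-\sigma(-t)$, which yields depth exactly $L_1+L_2$ and doubles only the two interface layers, giving $W(\Phi)\le W(\Phi_1)+W(\Phi_2)+W^{L_1}_1+W^1_2\le 2W(\Phi_1)+2W(\Phi_2)$. No gaps; the bookkeeping on depth and nonzero weights is right.
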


A proof of the above lemma can be found in \cite{PeVo18}. The network $\Phi$ in this lemma is called the concatenation network of $\Phi_1$ and $\Phi_2$.

\section{{Approximation by sets of finite cardinality}}\label{sec-fabersystem}

In this  section we recall a decomposition of continuous functions on the unit cube $\IId$ by Faber series,  interpolation approximation by truncated Faber series  and by set of finite cardinality.  They are  a key tool for explicit construction of approximation methods by deep ReLU networks for functions in H\"older-Nikol'skii spaces of mixed smoothness. 

Let  $\varphi(x)\ = \ (1 - |x-1|)_+$, $x \in \RR$, be  the hat function (the piece-wise linear B-spline with knots at $0,1,2$), where 
$x_+:= \max(x,0)$ for $x \in \RR$. 
For $k\in \NNn$ we define the functions $\varphi_{k,s}$ by
\begin{equation}\label{eq:faber1}
\varphi_{k,s}(x):=
\varphi(2^{k+1}x - 2s), \quad k \geq 0,  \ s \in Z(k):=\{0,1,\ldots, 2^{k} - 1\},
\end{equation}
and
\begin{equation}\label{eq:faber2}
\varphi_{-1,s}(x) := \varphi (x - s + 1),\ s\in Z(-1):=\{0,1\}.
\end{equation}
For a univariate function $f$ on $\II$, 
$k \in \NNn$, and $s\in Z(k)$ we define
\begin{equation*} 
\lambda_{k,s}(f) \ := 
- \frac {1}{2} \Delta_{2^{-k-1}}^2 f\big(2^{-k}s\big), \ k \ge 0, \quad 
\lambda_{-1,s}(f) \ := f(s),
\end{equation*}
where 
$$
\Delta_h^2 f(x) := \
f(x + 2h) -  2f(x+h)+f(x),
$$
for all $x$ and $h \ge 0$ such that $x, x + h \in \II$. 
If $m\in \NN_0$ we put
\begin{equation}\label{R_m-1}
{R}_m(f) := \sum_{k=0}^m q_k(f),\qquad
q_k(f):=\sum_{s\in Z(k)}\lambda_{k,s}(f)\varphi_{k,s}.
\end{equation}
For $k\in \NN_0$, we define the functions $\varphi^*_{k,s} \in \mathring{C}(\II)$ by
	\begin{equation}\label{phi^*_ks}
	\varphi^*_{k,s}(x):=
	\varphi(2^{k+1}x - s + 1), \quad s \in Z_*(k):=\{1,\ldots, 2^{k+1} - 1\},
	\end{equation}
and for $f \in \mathring{C}(\II)$ one can check  
\begin{equation*} \label{R_m=Q_m}
{R}_{m}(f) 
\ = \
\sum_{s \in Z_*(m)} f(2^{-m-1}s)\varphi^*_{m,s}\,.
\end{equation*}
Hence ${R}_{m}(f) \in \mathring{C}(\II)$ interpolates $f$ at the points 
$2^{-m-1}s$, $s \in Z_*(m)$, that is, 
\begin{equation*} \label{R_m=f}
{R}_{m}(f)(2^{-m-1}s) 
\ = \
f(2^{-m-1}s), \quad 
s \in Z_*(m). 
\end{equation*}

Put $Z(\bk):={\mathlarger{\mathlarger{\mathlarger{\mathlarger{\times}}}}}_{j=1}^d Z(k_j)$.
For $\bk \in \NNdn$, $\bs \in Z(\bk)$, we introduce  the tensorized Faber   basis by
\begin{equation} \label{hat-function}
\varphi_{\bk,\bs}(\bx)
\, := \, \varphi_{k_1,s_1}(x_1)\cdot\ldots\cdot \varphi_{k_d,s_d}(x_d) ,\quad \bx\in \IId.
\end{equation}
We also define the linear functionals $\lambda_{\bk,\bs}$ for multivariate function $f$ on $\IId$, $\bk\in \NNdn$, and $\bs\in Z(\bk)$ by 
\begin{equation*} 
\lambda_{\bk,\bs}(f) \ := 
\prod_{i=1}^d \lambda_{k_i,s_i}(f),
\end{equation*}
where the univariate functional $\lambda_{k_i,s_i}$ is applied to the univariate function $f$ by considering $f$ as a function of variable $x_i$ with the other variables held fixed. We have the following lemma.

%We start by introducing the tensorized Faber   basis. 
%% Denote
%\[
% q_{\bk}(f) := \sum_{\bs\in Z(\bk)} \lambda_{\bk,\bs}(f)\varphi_{\bk,\bs}
%\]

\begin{lemma} \label{lemma[convergence(d)]}
	The  tensorized Faber system
$
	 \big\{\varphi_{\bk,\bs}: \ \bk \in \NNdn, \bs\in Z(\bk)\big\}
$ is a basis in $C(\IId)$. Moreover,  every function 
	$f \in C(\IId)$ can be represented by the Faber series 
	\begin{equation} \label{eq:FaberRepresentation}
	f
		\ = \
	\sum_{\bk \in \NNdn} q_{\bk}(f), \ \qquad q_{\bk}(f) := \sum_{\bs\in Z(\bk)} \lambda_{\bk,\bs}(f)\varphi_{\bk,\bs}
	\end{equation}
	converging in the norm of $C(\IId)$.
\end{lemma}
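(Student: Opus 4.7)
The proof proceeds by tensorizing the univariate Faber expansion. The univariate ingredient to invoke is that for every $g \in C(\II)$ the partial sum $R_m(g) = \sum_{k=-1}^{m} q_k(g)$ is precisely the continuous piecewise linear interpolant of $g$ at the dyadic nodes $\{j 2^{-m-1} : 0 \le j \le 2^{m+1}\}$. One checks this by induction on $m$: the base case $R_{-1}(g) = g(0)\varphi_{-1,0} + g(1)\varphi_{-1,1}$ interpolates at $\{0,1\}$, and the increment $q_k(g)$ uses the second-difference coefficients $-\tfrac12 \Delta^2_{2^{-k-1}} g(2^{-k}s)$, which are exactly what is needed to correct the coarser interpolant at the newly inserted midpoints without disturbing its values at the old nodes. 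As a consequence of the partition-of-unity property $\sum_{s} \varphi^*_{m,s}(x) \equiv 1$ on $\II$, each operator $R_m$ is linear with $\|R_m(g)\|_{C(\II)} \le \|g\|_{C(\II)}$, and uniform continuity of $g$ gives $R_m(g) \to g$ in $C(\II)$.

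The plan is then to define for $\bm \in \NN_0^d$ the multivariate operator
\[
R_{\bm} := R^{(1)}_{m_1} \circ R^{(2)}_{m_2} \circ \cdots \circ R^{(d)}_{m_d},
\]
where $R^{(i)}_{m_i}$ is the univariate operator applied in the variable $x_i$ with the other variables held fixed. These operators commute, extend as bounded linear operators on $C(\IId)$ with $\|R^{(i)}_{m_i}\|_{C(\IId) \to C(\IId)} \le 1$ (again by the partition-of-unity bound), and produce the multilinear interpolant of $f$ on the tensor dyadic grid. Because the univariate functionals $\lambda_{k,s}$ and the expansions $R_m = \sum_{k=-1}^m q_k$ tensorize, expanding the composition gives the identity
\[
R_{\bm}(f) \ = \ \sum_{\bk \in \NNdn,\ k_i \le m_i} q_{\bk}(f), \qquad q_{\bk}(f) = \sum_{\bs \in Z(\bk)} \lambda_{\bk,\bs}(f)\, \varphi_{\bk,\bs},
\]
with $\lambda_{\bk,\bs}(f) = \prod_{i=1}^d \lambda_{k_i,s_i}(f)$ and $\varphi_{\bk,\bs}(\bx) = \prod_i \varphi_{k_i,s_i}(x_i)$ as in \eqref{hat-function}.

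The convergence $R_{\bm}(f) \to f$ in $C(\IId)$ as $\min_i m_i \to \infty$ follows from the telescoping identity
\[
f - R_{\bm}(f) \ = \ \sum_{i=1}^d \Big( R^{(1)}_{m_1} \circ \cdots \circ R^{(i-1)}_{m_{i-1}}\Big) \circ \bigl(I - R^{(i)}_{m_i}\bigr)(f),
\]
combined with the bound $\|R^{(j)}_{m_j}\| \le 1$ and the estimate $\|(I - R^{(i)}_{m_i})(f)\|_{C(\IId)} \to 0$. The latter is nothing but univariate interpolation convergence applied uniformly in the remaining variables, which is legitimate because $f$ is uniformly continuous on the compact cube $\IId$. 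This establishes the representation \eqref{eq:FaberRepresentation} with convergence in $C(\IId)$ in the sense of partial sums over rectangular index sets $\{\bk : k_i \le m_i\}$.

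To conclude that $\{\varphi_{\bk,\bs}\}$ is a basis, one verifies uniqueness: if $\sum_{(\bk,\bs)} c_{\bk,\bs} \varphi_{\bk,\bs} = 0$ in $C(\IId)$ (with convergence in the same rectangular sense), then applying the multilinear functionals $\lambda_{\bk,\bs}$ in order of increasing $\bk$ forces $c_{\bk,\bs} = 0$ inductively, using the local interpolation property $\lambda_{\bk,\bs}(\varphi_{\bk',\bs'}) = 0$ whenever $\bk' \not\le \bk$ and the fact that for $\bk' \le \bk$ the coefficients are already pinned down by the coarser expansion on the dyadic grid at level $\bk$. The main technical obstacle is the commuting-telescope argument: one must confirm the operator-norm bound $\|R^{(i)}_{m_i}\|_{C(\IId)\to C(\IId)} \le 1$ (so that the contracting factors do not blow up when $d$ is large) and carefully justify that the order of univariate expansions is immaterial, so that rearranging the composition into the ordered sum $\sum_{\bk \le \bm} q_{\bk}(f)$ is valid.
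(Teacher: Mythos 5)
The paper does not actually prove this lemma: immediately after the statement it cites Faber for $d=1$, Triebel for $d=2$, and D\~ung for general $d\ge 2$, so there is no in-paper argument to compare against. Your tensorization proof is essentially the standard argument behind those references, and it is correct in outline: the univariate partial sum (with the $k=-1$ level included, which is the right normalization for general $f\in C(\II)$, as opposed to the paper's $R_m=\sum_{k=0}^m q_k$ which is tailored to $\mathring{C}(\II)$) is exactly the piecewise linear interpolant at the nodes $j2^{-m-1}$; the interpolation operators acting in different variables commute and are contractions; the telescoping identity reduces the multivariate error to a sum of $d$ univariate errors controlled by moduli of continuity; and biorthogonality of the functionals $\lambda_{\bk,\bs}$ with the functions $\varphi_{\bk',\bs'}$ gives uniqueness of the coefficients.

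Two small points to tighten. First, $\sum_{s\in Z_*(m)}\varphi^*_{m,s}\equiv 1$ is false on the two boundary cells $[0,2^{-m-1}]$ and $[1-2^{-m-1},1]$; the partition of unity that yields $\|R_m\|_{C(\II)\to C(\II)}\le 1$ must include the two boundary half-hats contributed by the $k=-1$ terms. Your $R_m$ does contain those terms, so only the stated identity needs correcting, not the conclusion. Second, what you actually establish is convergence of rectangular partial sums $\sum_{\bk\le\bm}q_\bk(f)$ together with uniqueness of coefficients; the claim that the system is a Schauder basis strictly requires fixing a linear enumeration of the index set and verifying convergence of all initial segments of that enumeration, a point the paper itself also leaves implicit by deferring to the literature. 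For the way the lemma is used later in the paper --- for $f\in\Uas$, where \eqref{eq:lambda-estimate} gives absolute, hence unconditional, convergence of $\sum_{\bk}q_\bk(f)$ --- your rectangular-sum statement is entirely sufficient.
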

 When $d=1$, the system \eqref{eq:faber1}, \eqref{eq:faber2} and above result goes back to Faber \cite{Fab09}. The decomposition \eqref{eq:FaberRepresentation} when $d=2$ and an extension for function spaces with mixed smoothness  was obtained  in \cite[Theorem 3.10]{Tri10B}. A generalization for the case $d\ge 2$ and also to B-spline interpolation and quasi-interpolation representation was established by the first author of this paper in \cite{Dung11a,Dung16}.

When $f \in \Uas$, $\lambda_{\bk,\bs}(f)=0$ if $k_j=-1$ for some $j\in \{1,\ldots,d\}$, hence we can write 
\begin{equation*}
f \ = \ \sum_{\bk \in \NNd_0} q_\bk(f)   
\end{equation*}
with unconditional convergence in $C(\IId)$, see \cite[Theorem 3.13]{Tri10B}.
In this case it holds the following estimate
	\begin{equation}\label{eq:lambda-estimate}
	\begin{aligned}
|\lambda_{\bk,\bs}(f)| &=  2^{-d}\bigg|\prod_{i = 1}^d
 \Delta_{2^{-k_i-1}}^2 f\big(2^{-\bk}\bs\big) \bigg|
 \\
&= 2^{-d}\bigg|\prod_{i = 1}^d
		\Big[\Delta_{2^{-k_i-1}} f\big(2^{-\bk}\bs+2^{-k_i-1}\bee^i\big)-\Delta_{2^{-k_i-1}} f\big(2^{-\bk}\bs \big)\Big] \bigg|
		\leq 2^{ -\alpha d}2^{- \alpha|\bk|_1},
	\end{aligned}
\end{equation}
for $\bk\in \NNd_0, \ \bs\in Z(\bk)$. Here
$\{\bee^i\}_{i=1,\ldots,d}$ is the standard basis of $\RR^d$.

For $f \in \mathring{C}(\IId)$,  we define the operator ${R}_m$ by 
\begin{equation*} \label{R_m^d}
{R}_m(f) 
:= \ 
\sum_{|\bk|_1 \leq m} q_\bk(f)
\ = \
\sum_{|\bk|_1 \leq m} \ \sum_{s \in Z(\bk)} \lambda_{\bk,\bs}(f)\varphi_{\bk,\bs}.
\end{equation*}
The  truncated Faber series  ${R}_m(f) \in \mathring{C}(\IId)$ completely determined by values  of $f$ at the points 
$2^{-\bk-\bone}\bs$, for $(\bk,\bs) \in G^d(m)$,
where
\[
G^d(m):= \big\{(\bk,\bs):\, \,   |\bk|_1 \le m,\ \bs \in Z_*(\bk) \big\},
\]
 $Z_*(\bk):= \prod_{j = 1}^d Z_*(k_j)$ and
$\bone=(1,\ldots,1)\in \NN^d$.
Moreover, ${R}_m(f)$ interpolates $f$ at the points 
$2^{-\bk-\bone}\bs$, for $(\bk,\bs) \in G^d(m)$, i.e.,
\begin{equation*} \label{R^d_m=f}
{R}_m(f)(2^{-\bk-\bone}\bs) 
\ = \
f(2^{-\bk-\bone}\bs), \quad 
(\bk,\bs) \in G^d(m).
\end{equation*}
The following lemma gives a $d$-dependent estimate of the  approximation error by $R_m(f)$ of $f \in \Uas$, see \cite{DN21}.

\begin{lemma} \label{thm-DT20}
	Let  $d \ge 2$, $m\in \NN$, and $0 < \alpha \le 1$. Then we have
\begin{equation*}  
		\begin{aligned}
			\sup_{f \in \Uas} \|f - {R}_m(f)\|_{\infty}
			&\le  
			2^{-\alpha} B^{d}\, 2^{- \alpha m} \, \binom{m+d}{d-1},\qquad B=(2^\alpha-1)^{-1}.
		\end{aligned}
	\end{equation*}
\end{lemma}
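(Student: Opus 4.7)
My plan is to use the Faber series representation $f = \sum_{\bk\in\NNd_0}q_\bk(f)$, which converges uniformly for $f\in\Uas$ (by Lemma~\ref{lemma[convergence(d)]} and the remark immediately after it). Since $R_m(f)$ is exactly the partial sum over $|\bk|_1 \le m$, the residual is the tail
$$f - R_m(f) = \sum_{|\bk|_1 > m} q_\bk(f),$$
and the whole proof reduces to controlling the sup-norm of this tail by summing $\bk$-block estimates level by level in $n = |\bk|_1$.

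First I would prove the block bound $\|q_\bk(f)\|_\infty \le 2^{-\alpha d}2^{-\alpha|\bk|_1}$. For each fixed $\bk \in \NNd_0$, the supports of the hat functions $\varphi_{\bk,\bs}$ are the dyadic boxes $\prod_{i=1}^{d}[s_i 2^{-k_i},(s_i+1)2^{-k_i}]$, whose interiors are pairwise disjoint, and $\varphi_{\bk,\bs}$ vanishes on the boundary of its support; hence at every point $\bx \in \IId$ at most one summand in $q_\bk(f)(\bx) = \sum_{\bs} \lambda_{\bk,\bs}(f)\varphi_{\bk,\bs}(\bx)$ is nonzero, and since $\|\varphi_{\bk,\bs}\|_\infty = 1$, the a priori bound \eqref{eq:lambda-estimate} gives the claim. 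Grouping multi-indices with $|\bk|_1 = n$ (there are $\binom{n+d-1}{d-1}$ of them in $\NNd_0$) and applying the triangle inequality then yields
$$\|f - R_m(f)\|_\infty \le \sum_{|\bk|_1 > m}\|q_\bk(f)\|_\infty \le 2^{-\alpha d}\sum_{n=m+1}^{\infty}\binom{n+d-1}{d-1}2^{-\alpha n}.$$

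The only non-routine step is the sharp tail inequality
$$\sum_{n=m+1}^{\infty}\binom{n+d-1}{d-1}x^n \le \binom{m+d}{d-1}\frac{x^{m+1}}{(1-x)^d}, \qquad 0<x<1,$$
applied at $x = 2^{-\alpha}$, where $(1-x)^{-d} = 2^{\alpha d} B^d$. I would derive this from the integral form of Taylor's remainder for $g(x) = (1-x)^{-d}$ together with the substitution $u = (x-t)/(1-t)$ (so that $1-t = (1-x)/(1-u)$), under which the remainder collapses to
$$(m+1)\binom{m+d}{d-1}(1-x)^{-d}\int_0^x u^m(1-u)^{d-1}\,du,$$
and the crude estimate $(1-u)^{d-1} \le 1$ on $[0,1]$ finishes the claim. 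Substituting into the previous display produces exactly $\|f - R_m(f)\|_\infty \le 2^{-\alpha}B^d 2^{-\alpha m}\binom{m+d}{d-1}$, as stated. The main difficulty is precisely this tail estimate: a naive bound of the form $\binom{n+d-1}{d-1} \le C_d (n/(m+1))^{d-1}\binom{m+d-1}{d-1}$ combined with a geometric sum produces an extra polynomial factor in $m$, and only the careful Taylor-remainder manipulation (or an equivalent Vandermonde-convolution identity) recovers the clean prefactor $\binom{m+d}{d-1}$ without pollution.
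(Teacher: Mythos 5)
Your proof is correct, and it is essentially the standard argument: the paper itself gives no proof of this lemma, deferring to the reference [DN21], and your route (tail of the Faber series, the block bound $\|q_\bk(f)\|_\infty \le 2^{-\alpha d}2^{-\alpha|\bk|_1}$ from the disjoint supports of the $\varphi_{\bk,\bs}$ together with \eqref{eq:lambda-estimate}, then summation over the $\binom{n+d-1}{d-1}$ multi-indices at each level) is exactly what that reference does. Your tail inequality $\sum_{n>m}\binom{n+d-1}{d-1}x^n \le \binom{m+d}{d-1}x^{m+1}(1-x)^{-d}$, obtained from the integral Taylor remainder of $(1-x)^{-d}$, checks out and is the sharpened, tail version of the full-series bound \eqref{eq-DT20} that the paper quotes from [DuTh20, Lemma 2.2]; evaluating at $x=2^{-\alpha}$ with $(1-2^{-\alpha})^{-d}=2^{\alpha d}B^d$ reproduces the stated constant $2^{-\alpha}B^d2^{-\alpha m}\binom{m+d}{d-1}$ exactly.
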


We make use the abbreviations: 
$\bx_j := (x_1,\ldots,x_j) \in \RR^j$;  $\bar{\bx}_j := (x_{j+1},\ldots,x_d) \in \RR^{d-j}$ with the convention $\bx_0 := 0$ for 
$\bx \in \RRd$ and $j = 0,1,\ldots, d-1$.
When $j=1$ we denote $x_1$ instead of $\bx_1$. 

For  $f \in \mathring{U}^{\alpha,1}_{\infty}$ we explicitly construct the function $S_f \in \mathring{C}(\II)$ by
\begin{equation} \label{approx:S_f}
	S_f := \ \sum_{s \in Z_*(m)} 2^{-\alpha (m+1)}l_s(f) \varphi^*_{m,s},
\end{equation}
where we put $l_0(f)=0$  and assign the values $ S_f(2^{-m-1}s) = 2^{-\alpha (m+1)}l_s (f)$ from left to right closest to $f(2^{-m-1}s)$ for $s = 1,\ldots, 2^{m+1}-1$. If there are two possible choices for $l_s(f)$ we choose $l_s(f)$ 
that is closest to the already determined $l_{s-1}(f)$.
We define 
\begin{equation} \label{Ss^{alpha}(m)}
	\Ss^{\alpha}(m) := \big\{S_f: f\in \mathring{U}^{\alpha,1}_{\infty} \big\}.
\end{equation}

It has been proved that the set $\Ss^{\alpha}(m) $ is finite and it holds the estimate $|\Ss^{\alpha}(m) |\leq 3^{2^{m+1}}$, see \cite{DN21}. Moreover, by Lemma \ref{thm-DT20} and \cite[Lemma 2.3]{DN21} for $f\in \mathring{U}^{\alpha,1}_{\infty}$ and $m\in \NN_0$ we have
\begin{equation}\label{eq-f-sf}
	\|f-S_f\|_{\infty} \leq 	\|f-R_m(f)\|_{\infty}+ \|R_m(f)-S_f\|_{\infty}\leq  2^{-(m+1)\alpha-\frac{1}{2}} + \frac{2^{-(m+1)\alpha}}{2^\alpha-1}  \,.
\end{equation}
In case of high dimensions we have the following.

\begin{lemma}\label{lem:pattern}
	Let   $m>1$, $d \ge 2$ and $0<\alpha \leq 1$.  For $f \in \Uas$, let the function ${S}_m(f)$ be defined by
\begin{equation} \label{S_f}
{S}_m(f)(\bx):= \sum_{|\bar{\bk}_1|\leq m }  2^{-\alpha (|\bar{\bk}_1|_1 + d - 1)}
\ \sum_{\bar{\bs}_1\in Z(\bar{\bk}_1)} 
\varphi_{\bar{\bk}_1,\bar{\bs}_1}(\bar{\bx}_1) S_{K_{\bar{\bk}_1,\bar{\bs}_1}(f)}(x_1),
\end{equation}
 where $S_{K_{\bar{\bk}_1,\bar{\bs}_1}(f)}\in \Ss^\alpha(m - |\bar{\bk}_1|_1)$  is  as in \eqref{approx:S_f} for the function $K_{\bar{\bk}_1,\bar{\bs}_1}(f)$. Then  it holds the inequality 	
\begin{equation*} \label{f-S<}
\|f-{S}_m(f)\|_{\infty} \leq   B^{d}  2^{-\alpha m} \binom{m+d}{d-1}.
\end{equation*}
 Moreover, for the set
\begin{equation*} \label{Ss^{d,alpha}(m)}
\Ss^{\alpha,d}(m):=\big\{{S}_m(f): \ f\in \Uas \big\},
\end{equation*}
we have 
 $N_{d}(m):=|\Ss^{\alpha,d}(m)| \le 3^{2^{m+1}\binom{m+d-1}{d-1}}$.
\end{lemma}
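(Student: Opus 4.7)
The plan is to establish the two claims—the sup-norm error bound and the cardinality estimate—by reducing them to univariate facts already at our disposal via a decomposition along the first coordinate.

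First I would split the $d$-variate Faber expansion along the $x_1$-direction. By Lemma \ref{lemma[convergence(d)]} together with \eqref{eq:lambda-estimate}, one can write
\begin{equation*}
f(\bx) \ = \ \sum_{\bar{\bk}_1 \in \NN_0^{d-1}} \sum_{\bar{\bs}_1 \in Z(\bar{\bk}_1)} 2^{-\alpha(|\bar{\bk}_1|_1 + d-1)} \varphi_{\bar{\bk}_1,\bar{\bs}_1}(\bar{\bx}_1) K_{\bar{\bk}_1,\bar{\bs}_1}(f)(x_1),
\end{equation*}
where $K_{\bar{\bk}_1,\bar{\bs}_1}(f)$ is the renormalized univariate slice obtained by collecting all Faber terms of $f$ with the last $d-1$ indices fixed. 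The crucial preparatory step is to verify that $K_{\bar{\bk}_1,\bar{\bs}_1}(f) \in \mathring{U}^{\alpha,1}_\infty$. After the normalization by $2^{\alpha(|\bar{\bk}_1|_1 + d - 1)}$, the estimate \eqref{eq:lambda-estimate} shows that its univariate Faber coefficients obey $|\lambda_{k_1,s_1}| \le 2^{-\alpha(k_1+1)}$, matching the bound for a member of the univariate class; a direct computation using mixed second differences of $f$ in the $\bar{\bx}_1$-variables then confirms that the H\"older-Nikol'skii semi-norm is at most $1$.

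Next I would decompose $f - S_m(f) = E_1 + E_2$, where $E_1$ collects the terms with $|\bar{\bk}_1|_1 \leq m$ (which matches the truncation defining $S_m(f)$) and $E_2$ the tail $|\bar{\bk}_1|_1 > m$. For $E_1$, the univariate estimate \eqref{eq-f-sf} applied to each slice with truncation level $m - |\bar{\bk}_1|_1$ gives $\|K_{\bar{\bk}_1,\bar{\bs}_1}(f) - S_{K_{\bar{\bk}_1,\bar{\bs}_1}(f)}\|_\infty \leq C\, 2^{-\alpha(m - |\bar{\bk}_1|_1 + 1)}$; multiplying by the prefactor $2^{-\alpha(|\bar{\bk}_1|_1 + d-1)}$ produces an $|\bar{\bk}_1|_1$-independent decay $2^{-\alpha(m+d)}$, and using that for fixed $\bar{\bx}_1$ the tents satisfy $\sum_{\bar{\bs}_1 \in Z(\bar{\bk}_1)} \varphi_{\bar{\bk}_1,\bar{\bs}_1}(\bar{\bx}_1) \leq 1$ (disjoint interiors of supports in each direction), summing over the $\binom{m+d-1}{d-1}$ indices gives the head contribution. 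For $E_2$, I would use $\|K_{\bar{\bk}_1,\bar{\bs}_1}(f)\|_\infty \leq 1$ together with the same tent bound, reducing the estimate to a $(d-1)$-dimensional geometric tail $\sum_{|\bar{\bk}_1|_1 > m} 2^{-\alpha |\bar{\bk}_1|_1}$ of order $B^{d-1} 2^{-\alpha m} \binom{m+d-1}{d-2}$. Collecting constants with $B = (2^\alpha-1)^{-1}$, both pieces combine into $B^d \, 2^{-\alpha m} \binom{m+d}{d-1}$.

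For the cardinality bound, $S_m(f)$ is determined exactly by the family of choices $S_{K_{\bar{\bk}_1,\bar{\bs}_1}(f)} \in \Ss^\alpha(m - |\bar{\bk}_1|_1)$ ranging over $(\bar{\bk}_1, \bar{\bs}_1)$ with $|\bar{\bk}_1|_1 \leq m$. Applying $|\Ss^\alpha(n)| \leq 3^{2^{n+1}}$ and $|Z(\bar{\bk}_1)| = 2^{|\bar{\bk}_1|_1}$ factor by factor gives
\begin{equation*}
|\Ss^{\alpha,d}(m)| \ \leq \ \prod_{|\bar{\bk}_1|_1 \leq m} \bigl( 3^{2^{m - |\bar{\bk}_1|_1 + 1}} \bigr)^{2^{|\bar{\bk}_1|_1}} \ = \ 3^{2^{m+1} \cdot |\{\bar{\bk}_1 \in \NN_0^{d-1} : |\bar{\bk}_1|_1 \leq m\}|} \ = \ 3^{2^{m+1} \binom{m+d-1}{d-1}}.
\end{equation*}
The main obstacle is the first step: while the Faber coefficient bound is immediate from \eqref{eq:lambda-estimate}, the verification that $K_{\bar{\bk}_1,\bar{\bs}_1}(f) \in \mathring{U}^{\alpha,1}_\infty$ (needed to legitimately invoke \eqref{eq-f-sf}) requires a careful expression of the slice in terms of $d$-dimensional mixed differences of $f$, and this is where most of the technical work in the proof is concentrated.
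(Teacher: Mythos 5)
The paper itself does not prove Lemma \ref{lem:pattern}; it cites \cite{DN21} for the proof, so there is no in-text argument to compare against. Judged on its own merits, your argument is correct and is essentially the intended one. Your reconstruction of $K_{\bar{\bk}_1,\bar{\bs}_1}(f)$ as the renormalized slice $2^{\alpha(|\bar{\bk}_1|_1+d-1)}\prod_{i=2}^d\lambda_{k_i,s_i}(f)$ is the right reading of \eqref{S_f}, and the membership $K_{\bar{\bk}_1,\bar{\bs}_1}(f)\in\mathring{U}^{\alpha,1}_{\infty}$ does follow from the mixed-difference computation you indicate: each factor $-\tfrac12\Delta^2_{2^{-k_i-1}}$ contributes a factor $2^{-\alpha(k_i+1)}$ both to the sup norm and to the univariate $H^\alpha_\infty$ seminorm of the slice, exactly cancelling the normalization, which also gives the $\|K_{\bar{\bk}_1,\bar{\bs}_1}(f)\|_\infty\le 1$ you need for the tail. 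The head/tail split then works quantitatively: the head is at most $2^{-\alpha(m+d)}(2^{-1/2}+B)\binom{m+d-1}{d-1}$ by \eqref{eq-f-sf} applied at level $m-|\bar{\bk}_1|_1$, the tail is at most $2^{-\alpha}B^{d-1}2^{-\alpha m}\binom{m+d-1}{d-2}$ by the same geometric-series estimate that underlies Lemma \ref{thm-DT20}, and since $2^{-\alpha d}(2^{-1/2}+B)\le B^d$ and $2^{-\alpha}\le B$ for $0<\alpha\le 1$, Pascal's rule $\binom{m+d}{d-1}=\binom{m+d-1}{d-1}+\binom{m+d-1}{d-2}$ yields exactly the stated bound $B^d2^{-\alpha m}\binom{m+d}{d-1}$ --- this is the one step you assert rather than carry out, and it should be written down explicitly since the lemma claims a specific constant. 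The cardinality count via $\prod_{|\bar{\bk}_1|_1\le m}\bigl(3^{2^{m-|\bar{\bk}_1|_1+1}}\bigr)^{2^{|\bar{\bk}_1|_1}}$ is clean and correct.
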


For a proof of the above lemma we refer the reader to \cite{DN21}.

%%%%%%%%%%%%%%%%%%%%%%%%%%%%%%%%%%%%%%%%%%%%%%%%%%%%%%%%%%%%%

\section{Deep ReLU network approximation -  a nonapdaptive method}\label{sec-linear-method}
In this section, we explicitly construct a   nonapdaptive deep ReLU neural network having an output that approximates  every  function  $f\in \Uas$ in  the  $L_\infty(\IId)$-norm  with a prescribed accuracy $\varepsilon$ and prove  dimension-dependent error estimates of  its size and depth. Nonadaptivity means that its architecture is the same for all $f\in \Uas$.
Our technique is first to approximate $f$ by its truncation of Faber series $R_n(f)$ and then approximate $R_n(f)$ by a deep ReLU network. Since the case $d=1$  was already  considered  (see, e.g.,  \cite{AlNo20,DDF.19,GKNV19} ), we study the high dimension case when  $d\geq 2$.  Our main result in this section is read as follows.

\begin{theorem}\label{thm-deep-NN1}
Let $d\in \NN$, $d\geq 2$ and $\alpha \in (0,1]$. Then there is $\varepsilon_0 =\varepsilon_0(d,\alpha)\in (0,1]$ such that for every $\varepsilon\in (0,\varepsilon_0)$ we can explicitly construct a deep neural network architecture ${\mathbb A}_\varepsilon$ with the following property.
For every $f \in \Uas$,  we can explicitly construct a deep ReLU neural network  $\Phi_\varepsilon(f)$ having the architecture ${\mathbb A}_\varepsilon$ such that	
	
	$$\|f-\Phi_\varepsilon(f)\|_\infty \leq \varepsilon.$$
	Moreover, we have
\begin{equation}\label{eq-thm1-size}
	W({\mathbb A}_\varepsilon)\leq C_\alpha d   \bigg( \frac{K_1^d}{(d-1)!}\bigg)^{\frac{1}{\alpha}+1} \varepsilon^{-\frac{1}{\alpha}} \log(2\varepsilon^{-1})^{(d-1)(\frac{1}{\alpha}+1)+1} 
\end{equation}
	and
	$$
	L({\mathbb A}_\varepsilon) \leq C \log d \log(2\varepsilon^{-1}),
	$$
	where $K_1=B^{1/(\alpha+1)}4\alpha^{-1}$ with $B$ given in Lemma \ref{thm-DT20} and $C_\alpha$ depends only on $\alpha$.
\end{theorem}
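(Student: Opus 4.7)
The plan is to carry out the two-step strategy announced before the theorem: first approximate $f$ by its truncated Faber series $R_n(f)$ via Lemma \ref{thm-DT20}, then build a deep ReLU network that approximately evaluates $R_n(f)(\bx)=\sum_{|\bk|_1\le n}\sum_{\bs\in Z(\bk)}\lambda_{\bk,\bs}(f)\,\varphi_{\bk,\bs}(\bx)$. I would first choose $n=n(\varepsilon,d,\alpha)$ as the smallest integer satisfying $2^{-\alpha}B^d 2^{-\alpha n}\binom{n+d}{d-1}\le \varepsilon/2$, so that Lemma \ref{thm-DT20} gives $\|f-R_n(f)\|_\infty\le \varepsilon/2$ for every $f\in\Uas$. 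For $\varepsilon$ small enough in terms of $(d,\alpha)$, the defining inequality yields $n\le C\alpha^{-1}\log(2\varepsilon^{-1})$, the correction in $(d-1)\log n$ being absorbed into a later polylog.

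Next I would construct a fixed architecture $\mathbb{A}_\varepsilon$ that represents every basis product $\varphi_{\bk,\bs}$. Each univariate hat $\varphi_{k_i,s_i}$ is piecewise linear with three breakpoints and hence realized exactly as $\sigma(a_1 x_i+c_1)-2\sigma(a_2 x_i+c_2)+\sigma(a_3 x_i+c_3)$ by a two-layer ReLU network of constant size. The $d$-fold tensor product $\varphi_{\bk,\bs}(\bx)=\prod_{i=1}^d\varphi_{k_i,s_i}(x_i)$ is then approximated by feeding these univariate outputs into a standard binary-tree ReLU multiplication network $\Pi_d^\delta$ approximating the product of $d$ inputs in $[0,1]$ to accuracy $\delta$ with $O(d\log(d\delta^{-1}))$ weights and depth $O(\log d\cdot\log(d\delta^{-1}))$. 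Invoking Lemma \ref{lem:parallel} to combine these sub-networks with coefficients $\lambda_{\bk,\bs}(f)$ over all $(\bk,\bs)$ with $|\bk|_1\le n$ produces $\Phi_\varepsilon(f)$; only the edge weights depend on $f$, so $\mathbb{A}_\varepsilon$ is common to the entire class $\Uas$.

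For the error analysis I would exploit the tiling of $\II$ by the supports $[s/2^{k},(s+1)/2^{k}]$, $s\in Z(k)$: at each $\bx\in\IId$ and each fixed $\bk$ at most one $\bs\in Z(\bk)$ satisfies $\varphi_{\bk,\bs}(\bx)\ne 0$. Combined with the bound $|\lambda_{\bk,\bs}(f)|\le 2^{-\alpha d}2^{-\alpha|\bk|_1}$ from \eqref{eq:lambda-estimate} and the generating-function identity $\sum_{j\ge 0}\binom{j+d-1}{d-1}2^{-\alpha j}=(1-2^{-\alpha})^{-d}=2^{\alpha d}B^d$, the product-approximation error at $\bx$ is bounded by $\delta\cdot B^d$. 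Taking $\delta:=\varepsilon/(2B^d)$ therefore gives $\|R_n(f)-\Phi_\varepsilon(f)\|_\infty\le \varepsilon/2$, and combining with the first step yields total error $\le\varepsilon$.

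The size bound \eqref{eq-thm1-size} then follows by counting active sub-networks, $\sum_{j=0}^n 2^j\binom{j+d-1}{d-1}\le C\cdot 2^n(n+d-1)^{d-1}/(d-1)!$, multiplied by the $O(d\log(d\delta^{-1}))$ weights per sub-network. The depth bound follows from $L(\Phi_\varepsilon)=O(\log d\cdot\log(d\delta^{-1}))$, which under $\varepsilon<\varepsilon_0(d,\alpha)$ simplifies to $C\log d\log(2\varepsilon^{-1})$. The main obstacle, as I anticipate it, is the final bookkeeping: one must balance $n$ against the binomial factor $\binom{n+d-1}{d-1}\approx n^{d-1}/(d-1)!$ and the exponential $2^n$, and use the algebraic identity $B^{d/\alpha}\cdot(4\alpha^{-1})^{d(1+1/\alpha)}=K_1^{d(1+1/\alpha)}$ to collapse the dimensional constant to precisely the stated form $(K_1^d/(d-1)!)^{1/\alpha+1}$ with $K_1=B^{1/(\alpha+1)}\cdot 4\alpha^{-1}$.
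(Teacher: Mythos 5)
Your proposal is correct and follows essentially the same route as the paper: choose $n$ from the threshold $2^{-\alpha}B^d2^{-\alpha n}\binom{n+d}{d-1}\le\varepsilon/2$ via Lemma \ref{thm-DT20}, approximate $R_n(f)$ by parallelizing product networks for the $\varphi_{\bk,\bs}$ with accuracy $\delta\asymp B^{-d}\varepsilon$ (this is exactly Lemma \ref{lem-Phi-Rnf}, using the disjoint-support observation, the bound \eqref{eq:lambda-estimate} and the generating-function estimate \eqref{eq-DT20}), and then perform the same weight count and the same absorption of $B^{d/\alpha}(4\alpha^{-1})^{d(1+1/\alpha)}$ into $K_1^{d(1/\alpha+1)}$. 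No essential difference from the paper's argument.
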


To prepare for proving Theorem \ref{thm-deep-NN1} we recall results of approximating the product $\prod_{j=1}^{d}x_j$ and $\varphi_{\bk,\bs}$ by deep ReLU neural networks, see \cite{ScZe19} and \cite{DN20arXiv}.

\begin{lemma} \label{lem:multi}
	For  every $\delta \in (0,1)$, $d\in \N$, $d\geq 2$, we can explicitly construct  a deep  ReLU neural network  $\Phi_P$  so that 
	$$
	\sup_{ \bx \in [-1,1]^d} \Bigg|\prod_{j=1}^d x_j - \Phi_P(\bx) \Bigg| \leq \delta. 
	$$
	Furthermore, if $x_j=0$ for some $j\in \{1,\ldots,d\}$ then $\Phi_P(\bx)=0$ and there exists a constant $C>0$ independent of $\delta$ and $d$ such that
	$$
	W(\Phi_P) \leq C d\log (d\delta^{-1}) 
	\qquad \text{and}\qquad
	L(\Phi_P)  \leq C\log d\log(d\delta^{-1}) \,.
	$$
\end{lemma}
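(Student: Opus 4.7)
The plan is to construct $\Phi_P$ as a balanced binary tree of pairwise-multiplication subnetworks, each built from a ReLU approximation of the squaring function in the style of Yarotsky. First, I would construct a subnetwork $\Phi_{\rm sq}$ approximating $t \mapsto t^2$ on $[-1,1]$ to accuracy $\eta$. The standard construction uses the piecewise-linear interpolants of $t^2$ at dyadic nodes $s/2^m$: the interpolation error at level $m$ is $4^{-m-1}$, and the successive differences between these interpolants can be written as scaled iterates of the hat function $\sigma(2t)-2\sigma(2t-1)+\sigma(2t-2)$, each iterate implementable by a constant-size ReLU block. Summing $m = O(\log(1/\eta))$ such blocks yields $\Phi_{\rm sq}$ with $W(\Phi_{\rm sq}),L(\Phi_{\rm sq}) = O(\log(1/\eta))$, and crucially $\Phi_{\rm sq}(0)=0$ since every block vanishes at $0$. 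Setting $\tilde\times(x,y) := \tfrac14\bigl(\Phi_{\rm sq}(x+y) - \Phi_{\rm sq}(x-y)\bigr)$ yields a bivariate multiplier with $|\tilde\times(x,y)-xy|\le \eta/2$ on $[-1,1]^2$ and $\tilde\times(0,y)=\tilde\times(x,0)=0$.

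Next, I would arrange the $d$ coordinates at the leaves of a balanced binary tree with $L_0:=\lceil\log_2 d\rceil$ levels, applying $\tilde\times$ at each internal node, padding empty leaves with the constant $1$ when $d$ is not a power of two. Composing these $\tilde\times$ blocks through the tree gives $\Phi_P$ with $L(\Phi_P) = L_0\cdot O(\log(1/\eta))$ and at most $d-1$ multiplier blocks, hence $W(\Phi_P) \le (d-1)\cdot O(\log(1/\eta))$ (using Lemma \ref{lem:parallel} or Lemma \ref{lem-concatenation} for the level-wise assembly). Zero-preservation carries up the tree: if any original $x_j=0$, then the corresponding leaf-to-root path yields $0$ at every node it passes through.

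The main obstacle is the error propagation analysis. Writing the value at each internal node as an exact partial product plus an accumulated error $e_\ell$, and bounding intermediate inputs to $\tilde\times$ inductively in $[-1-\rho,\,1+\rho]$ for some small $\rho$, one obtains a recursion of the form $|e_{\ell+1}|\le (1+\rho)|e_\ell| + \eta/2$; after $L_0$ levels this gives a total error bounded by $C\eta L_0$ (or a worst-case $C\eta d$), provided $\eta$ is small enough that $\rho$ stays bounded. If the intermediate range is a concern, one can insert a ReLU-implementable clip $t \mapsto \sigma(t+2)-\sigma(t-2)-2$ at each level at the cost of only constant overhead per node.

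Finally, choosing $\eta$ of order $\delta/d$ yields sup-error at most $\delta$ on $[-1,1]^d$, and substituting into the size and depth tallies produces
\[
W(\Phi_P) \le Cd\log(d\delta^{-1}), \qquad L(\Phi_P) \le C\log d\,\log(d\delta^{-1}),
\]
which matches the claimed bounds. The zero-preservation property is retained throughout, completing the construction asserted by Lemma \ref{lem:multi}.
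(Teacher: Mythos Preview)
The paper gives no proof of Lemma~\ref{lem:multi}; it simply recalls the result from \cite{ScZe19} and \cite{DN20arXiv}. Your sketch reproduces exactly the construction in those references (Yarotsky squaring subnetwork, polarization to a bivariate multiplier, balanced binary tree), so the approach is the right one.

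Two technical points deserve tightening. First, the zero-preservation claim $\tilde\times(0,y)=0$ does \emph{not} follow from $\Phi_{\rm sq}(0)=0$ alone: with your polarization formula one gets $\tilde\times(0,y)=\tfrac14\bigl(\Phi_{\rm sq}(y)-\Phi_{\rm sq}(-y)\bigr)$, which vanishes only if $\Phi_{\rm sq}$ is \emph{even}. This is easy to arrange --- precompose the Yarotsky square on $[0,1]$ with $t\mapsto|t|=\sigma(t)+\sigma(-t)$ --- but it must be said. (This also fixes the domain issue: with the even square on $[-1,1]$ one uses $xy=\Phi_{\rm sq}\bigl(\tfrac{x+y}{2}\bigr)-\Phi_{\rm sq}\bigl(\tfrac{x-y}{2}\bigr)$, keeping all arguments in $[-1,1]$ rather than $[-2,2]$.) Second, the recursion $|e_{\ell+1}|\le(1+\rho)|e_\ell|+\eta/2$ misses that each internal node receives errors from \emph{two} children; the correct bound is $E_{\ell+1}\le 2E_\ell+E_\ell^2+\eta/2$, which after $L_0=\lceil\log_2 d\rceil$ levels yields $E_{L_0}\le C\eta\,2^{L_0}=C\eta d$ rather than $C\eta L_0$. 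Your parenthetical ``worst-case $C\eta d$'' and the choice $\eta\asymp\delta/d$ already lead to the correct final estimate, so only the displayed recursion is off.
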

\begin{lemma} \label{lem:product}
	For  every dimension $d\geq 2$, $\delta \in(0,1)$ and for the $d$-variate hat functions $\varphi_{\bk,\bs}$, $\bk \in \NNd_0$, $\bs \in Z(\bk)$,   defined as in \eqref{hat-function}, we can explicitly construct a deep neural network  $\Phi_\delta(\varphi_{\bk,\bs})$  so that 
	\begin{equation*} \label{varphi-Phi}
		\|\varphi_{\bk,\bs} - \Phi_\delta(\varphi_{\bk,\bs})\|_{\infty}
		\ \le \ \delta
	\end{equation*}
	and 
	\begin{equation} \label{N_w}
		W(\Phi_\delta(\varphi_{\bk,\bs})) \leq C d\log(d\delta^{-1})\quad \text{and}\quad L(\Phi_\delta(\varphi_{\bk,\bs})) \leq C\log d\log(d\delta^{-1})\,.
	\end{equation}
	Moreover,  $\supp  \Phi_\delta(\varphi_{\bk,\bs}) \subset \supp \varphi_{\bk,\bs}$.
\end{lemma}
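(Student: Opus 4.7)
The plan is to realize $\Phi_\delta(\varphi_{\bk,\bs})$ as the concatenation of two blocks: a shallow network $\Psi$ that exactly computes the vector of univariate factors $\bigl(\varphi_{k_1,s_1}(x_1),\ldots,\varphi_{k_d,s_d}(x_d)\bigr)$, followed by the product network $\Phi_P$ from Lemma \ref{lem:multi} applied with tolerance $\delta$, which approximates $\prod_{j=1}^d u_j$ on $[-1,1]^d$ within $\delta$.

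For the first block, I would use the elementary identity
$$
\varphi(x) = (1-|x-1|)_+ = \sigma(x) - 2\sigma(x-1) + \sigma(x-2),
$$
which shows that each univariate hat function $\varphi_{k_j,s_j}(x_j) = \varphi(2^{k_j+1}x_j - 2s_j)$ is realized exactly by a two-layer ReLU subnetwork with three hidden neurons (the affine rescaling is absorbed into the first weight matrix and bias). Placing these $d$ subnetworks side by side on a common input layer of dimension $d$, each reading only the $j$-th coordinate through its first affine map and with no final linear combination performed, produces a single shallow ReLU network $\Psi$ of input dimension $d$ and $d$-dimensional vector output with $W(\Psi) = O(d)$ and $L(\Psi) = 2$. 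This is a straightforward variant of the construction in Lemma \ref{lem:parallel}, keeping the individual scalar outputs rather than summing them.

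Setting $\Phi_\delta(\varphi_{\bk,\bs}) := \Phi_P \circ \Psi$ via Lemma \ref{lem-concatenation}, the error estimate is immediate: since each factor $u_j := \varphi_{k_j,s_j}(x_j)$ lies in $[0,1] \subset [-1,1]$, Lemma \ref{lem:multi} gives
$$
\bigl|\varphi_{\bk,\bs}(\bx) - \Phi_\delta(\varphi_{\bk,\bs})(\bx)\bigr| \,=\, \Bigl|\prod_{j=1}^d u_j \,-\, \Phi_P(\bu)\Bigr| \,\leq\, \delta.
$$
The support inclusion follows from the second clause of Lemma \ref{lem:multi}: if $\bx \notin \supp \varphi_{\bk,\bs}$ then $u_j = 0$ for some $j$, which forces $\Phi_P(\bu) = 0$.

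The size and depth bounds are obtained by combining Lemma \ref{lem:multi} ($W(\Phi_P) \leq C d \log(d\delta^{-1})$ and $L(\Phi_P) \leq C \log d \log(d\delta^{-1})$) with $W(\Psi) = O(d)$ and $L(\Psi) = 2$, using the concatenation estimates $W(\Phi) \leq 2W(\Psi) + 2W(\Phi_P)$ and $L(\Phi) = L(\Psi) + L(\Phi_P)$; this yields exactly the claimed bounds. The construction is entirely modular, so there is no real obstacle; the only care required is the bookkeeping to check that the constants absorb correctly and that the vector-valued parallelization of the univariate hat subnetworks stays within the $O(d)$ weight budget.
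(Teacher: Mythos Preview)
Your proof is correct and is the natural construction; this is essentially the argument given in \cite{DN20arXiv}, which the paper cites without reproducing the proof. The paper itself states the lemma as a recalled result and does not supply its own argument, so there is nothing further to compare.
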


The above result allows us to construct a deep ReLU network $\Phi_\varepsilon\big({R}_n(f)\big)$ to approximate ${R}_n(f)$.
\begin{lemma}\label{lem-Phi-Rnf} 
	Let  $d\in \NN$, $d\geq 2$, $n\in \NN$, $\alpha\in (0,1]$ and $ \varepsilon\in (0,1)$. Then for every $f\in \Uas$ we can explicitly construct a deep ReLU network $\Phi_\varepsilon\big({R}_n(f)\big)$  of the same architecture $\mathbb{A}_\varepsilon$ so that

	\begin{equation}\label{eq:approximate-S}
		\big\| {R}_n(f) - \Phi_\varepsilon\big({R}_n(f)\big) \big\|_\infty   \leq  \varepsilon.
	\end{equation}
Moreover, we have
\begin{equation}\label{eq-W(A-epsilon)}
	W\big(\Phi_\varepsilon({R}_n(f))\big)\leq  C d 2^{n}    \binom{n+d-1}{d-1} \log(d B^{d} \varepsilon^{-1})
\end{equation}
and
\begin{equation}\label{eq-L(A-epsilon)}
 L\big(\Phi_\varepsilon({R}_n(f))\big) \leq C  \log d   \log(d B^{d} \varepsilon^{-1}).
\end{equation}
The estimates \eqref{eq-W(A-epsilon)} and \eqref{eq-L(A-epsilon)} also hold for $W(\mathbb{A}_\varepsilon)$ and $L(\mathbb{A}_\varepsilon)$ respectively.
\end{lemma}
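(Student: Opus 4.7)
The plan is to approximate $R_n(f)$ by replacing each tensor-product hat function $\varphi_{\bk,\bs}$ in its Faber expansion by the deep ReLU network $\Phi_\delta(\varphi_{\bk,\bs})$ from Lemma~\ref{lem:product}, for a tolerance $\delta>0$ to be chosen, and then to combine these subnetworks by Parallelization (Lemma~\ref{lem:parallel}) with weights $\lambda_{\bk,\bs}(f)$. Concretely, I would set
\[
\Phi_\varepsilon\bigl(R_n(f)\bigr) \;:=\; \sum_{|\bk|_1\leq n}\sum_{\bs\in Z(\bk)} \lambda_{\bk,\bs}(f)\,\Phi_\delta(\varphi_{\bk,\bs}).
\]
The crucial observation is that the underlying graph of this network and the set of potentially non-zero weights depend only on $n$, $d$ and $\varepsilon$; only the final-layer edge weights $\lambda_{\bk,\bs}(f)$ change with $f$. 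Hence a single architecture $\mathbb{A}_\varepsilon$ accommodates every $f\in \Uas$.

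For the error estimate I would exploit two facts. First, by Lemma~\ref{lem:product}, $\Phi_\delta(\varphi_{\bk,\bs})$ is supported in $\supp \varphi_{\bk,\bs}$; moreover, for a fixed level $\bk$, the supports $\supp \varphi_{\bk,\bs}$, $\bs\in Z(\bk)$, have pairwise disjoint interiors, since the one-dimensional supports $(s_i 2^{-k_i},(s_i+1)2^{-k_i})$ are disjoint for distinct $s_i$. Thus at any point $\bx\in \IId$ only one index $\bs$ per level $\bk$ contributes to $R_n(f)(\bx)-\Phi_\varepsilon(R_n(f))(\bx)$. Second, the coefficient bound $|\lambda_{\bk,\bs}(f)|\leq 2^{-\alpha d}2^{-\alpha|\bk|_1}$ from \eqref{eq:lambda-estimate} yields
\[
\|R_n(f)-\Phi_\varepsilon(R_n(f))\|_\infty \;\leq\; \delta\sum_{j=0}^n 2^{-\alpha d}\,2^{-\alpha j}\binom{j+d-1}{d-1} \;\leq\; \delta\, 2^{-\alpha d}(1-2^{-\alpha})^{-d} \;=\; \delta B^d,
\]
where I used $\sum_{j\geq 0} 2^{-\alpha j}\binom{j+d-1}{d-1}=(1-2^{-\alpha})^{-d}$. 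Choosing $\delta := \varepsilon B^{-d}$ then delivers \eqref{eq:approximate-S}.

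For the complexity estimates I would bound the total number of subnetworks by
\[
N \;:=\; \sum_{|\bk|_1\leq n}|Z(\bk)| \;=\; \sum_{j=0}^n 2^j\binom{j+d-1}{d-1} \;\leq\; 2^{n+1}\binom{n+d-1}{d-1},
\]
using that the ratio of consecutive terms $2(j+d)/(j+1)$ always exceeds $2$. Lemma~\ref{lem:product} with $\delta=\varepsilon B^{-d}$ gives $W(\Phi_\delta(\varphi_{\bk,\bs}))\leq Cd\log(dB^d\varepsilon^{-1})$ and $L(\Phi_\delta(\varphi_{\bk,\bs}))\leq C\log d\log(dB^d\varepsilon^{-1})$ uniformly in $(\bk,\bs)$. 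Parallelization via Lemma~\ref{lem:parallel} then yields a single network whose size and depth match \eqref{eq-W(A-epsilon)} and \eqref{eq-L(A-epsilon)} up to absolute constants; the same bounds pass to $W(\mathbb{A}_\varepsilon)$ and $L(\mathbb{A}_\varepsilon)$, since $\mathbb{A}_\varepsilon$ has the same underlying graph.

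The main obstacle, which a naive triangle inequality would miss, is the error control: blindly summing over all $(\bk,\bs)$ would produce a penalty proportional to $N\delta$, far too large to allow $\delta^{-1}$ to remain merely logarithmic in $\varepsilon^{-1}$. The observation that at each $\bx$ only one $\bs$ per level $\bk$ activates, combined with the geometric decay $|\lambda_{\bk,\bs}(f)|\lesssim 2^{-\alpha|\bk|_1}$, collapses the penalty to the purely dimensional factor $B^d$; this is precisely what keeps $\log(d\delta^{-1})$ of the order $\log(dB^d\varepsilon^{-1})$ and leads to the claimed size bound \eqref{eq-W(A-epsilon)}.
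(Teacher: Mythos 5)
Your proposal is correct and follows essentially the same route as the paper: replace each $\varphi_{\bk,\bs}$ by $\Phi_\delta(\varphi_{\bk,\bs})$ with $\delta=\varepsilon B^{-d}$, parallelize, use the disjointness of the supports at each fixed level $\bk$ together with the decay $|\lambda_{\bk,\bs}(f)|\le 2^{-\alpha d}2^{-\alpha|\bk|_1}$ and the binomial series bound to control the error, and observe that only the scalar weights $\lambda_{\bk,\bs}(f)$ depend on $f$ so that a single architecture serves all of $\Uas$. The complexity bookkeeping also matches the paper's, so no further comment is needed.
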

\begin{proof} For every pair $\bk, \bs $ with $|\bk|_1 \leq n$ and $\bs\in Z(\bk)$, by applying Lemma  \ref{lem:product} with $\delta :=  B^{-d} \varepsilon$, we explicitly construct 
	a deep ReLU neural network $\Phi_\delta(\varphi_{\bk,\bs})$	so that $\supp  \Phi_\delta(\varphi_{\bk,\bs}) \subset \supp \varphi_{\bk,\bs}$,
	\begin{equation} \label{varphi_{k,s} - Phi_delta}
		\| \varphi_{\bk,\bs} - \Phi_\delta(\varphi_{\bk,\bs}) \|_{\infty} \leq  B^{-d} \varepsilon,
	\end{equation}
	and it holds the estimates \eqref{N_w} for $W(\Phi_\delta(\varphi_{\bk,\bs}))$ and $L(\Phi_\delta(\varphi_{\bk,\bs}))$.
	We approximate ${R}_n(f)$ by  the  output
	\begin{equation*}\label{eq:phi-S}
		\Phi_\varepsilon\big({R}_n(f)\big)(\bx)  = \sum_{|\bk|_1 \leq n} \sum_{\bs\in Z(\bk)} \lambda_{\bk,\bs}(f) \Phi_\delta(\varphi_{\bk,\bs}) (\bx)
	\end{equation*}
	of the network $\Phi_\varepsilon\big({R}_n(f)\big)$ which is a parallelization of the networks $\{\Phi_\delta(\varphi_{\bk,\bs})\}_{|\bk|_1 \leq n,\, \bs\in Z(\bk)} $. Notice that the interiors of $\supp \Phi_\delta( \varphi_{\bk,\bs})$ and $\supp \Phi_\delta( \varphi_{\bk,\bs'})$ have empty intersection if $\bs \not= \bs'$. Moreover, for every $\bx \in \IId$, there is an $\bs \in Z(\bk)$ such that $\bx \in \supp \varphi_{\bk,\bs}$, and
	hence,  by using \eqref{eq:lambda-estimate} and  \eqref{varphi_{k,s} - Phi_delta} we get the estimates
	\begin{equation*}
		\begin{aligned}
			|{R}_n(f)(\bx) -  \Phi_\varepsilon\big({R}_n(f)\big)(\bx)| 
			&
			\ = \
			\sum_{|\bk|_1 \leq n} \big|\lambda_{\bk,\bs}(f) \big(\varphi_{\bk,\bs} (\bx)  -  \Phi_\delta(\varphi_{\bk,\bs})(\bx) \big)\big| 
			\\
			&
			\leq 
			2^{-\alpha d}\sum_{|\bk|_1\leq n} 2^{-\alpha |\bk|_1 }  \varepsilon B^{-d} 
			\\
			&\leq  \varepsilon(1-2^{-\alpha})^d\sum_{j=0}^n 2^{-\alpha j} \binom{j+d-1}{d-1}.
		\end{aligned}
	\end{equation*}
From
\begin{equation}\label{eq-DT20}
	\sum_{j=0}^\infty\binom{j+m}{m}t^j \leq (1-t)^{-m-1},\qquad t\in(0,1),
\end{equation} 
see \cite[Lemma 2.2]{DuTh20}, we  obtain \eqref{eq:approximate-S}.
	
	By using Lemma \ref{lem:parallel} and the estimates \eqref{N_w}, the size   and the depth of $\Phi_\varepsilon\big({R}_n(f)\big)$ can be estimated as
	\begin{equation*}\label{eq:PhiS-estimate}
		\begin{aligned}
			W(\Phi_\varepsilon\big({R}_n(f)\big))&\leq  C |\{(\bk,\bs): |\bk|_1\leq n,\bs \in Z(\bk)\}| \max_{|\bk|_1\leq n,\bs \in Z(\bk)} W(\Phi_\delta(\varphi_{\bk,\bs}))
			\\
			&=C\sum_{\ell=0}^n 2^{\ell} \binom{\ell+d-1}{d-1}d  \log(d B^{d} \varepsilon^{-1})
			\\
			&
			\leq  C d 2^{n}    \binom{n+d-1}{d-1} \log(d B^{d} \varepsilon^{-1}),
		\end{aligned}
	\end{equation*}
	and
	$$
	L(\Phi_\varepsilon\big({R}_n(f)\big)) 
	\leq \max_{|\bk|_1\leq n, \bs\in Z(\bk)} L(\Phi_\delta(\varphi_{\bk,\bs})) 
	\leq C \log d   \log(d B^{d} \varepsilon^{-1}).
	$$

	The proof is completed by noticing that $\Phi_\varepsilon\big({R}_n(f)\big)$ has the architecture $\mathbb{A}_\varepsilon$ (independent of $f$) which is defined as the minimal architecture of the deep ReLU neural network $\Phi_\varepsilon$ obtained by  parallelization of the networks $\{\Phi_\delta(\varphi_{\bk,\bs})\}_{|\bk|_1 \leq n,\, \bs\in Z(\bk)} $ with the output
$$		\Phi_\varepsilon(\bx)  = \sum_{|\bk|_1 \leq n} \sum_{\bs\in Z(\bk)}  \Phi_\delta(\varphi_{\bk,\bs}) (\bx),\quad \bx\in \IId.$$
Hence, the estimates \eqref{eq-W(A-epsilon)} and \eqref{eq-L(A-epsilon)} also hold for $W(\mathbb{A}_\varepsilon)$ and $L(\mathbb{A}_\varepsilon)$ respectively.
	\hfill
\end{proof}

We are ready to prove Theorem \ref{thm-deep-NN1}.

\begin{proof}Denote $n_0$ the natural point from which the function $h(n)=2^{-\alpha}B^d2^{-\alpha n} \binom{n+d}{d-1}$ is decreasing and $h(n-1)\leq 2^{-\alpha n/2} $ for all $n> n_0$. We put $\varepsilon_0=\min\{h(n_0),h(d)\}$. For $\varepsilon\in (0,\varepsilon_0)$ we define $n>\max\{n_0,d\}$ by  
\begin{equation}\label{eq-varepsilon}
	2^{-\alpha}B^{d}2^{-\alpha n} \binom{n+d}{d-1}\leq \frac{\varepsilon}{2} < 2^{-\alpha}B^{d}2^{-\alpha (n-1)} \binom{n-1+d}{d-1}.
\end{equation}
	With $\varepsilon'=\varepsilon/2$ in Lemma \ref{lem-Phi-Rnf} and $\Phi_\varepsilon(f)=\Phi_{\varepsilon'}({R}_n(f))$  we have
	\begin{align*} 
		\|f-\Phi_\varepsilon(f)\|_\infty \leq \|f-R_n(f)\|_\infty + \|{R}_n -  \Phi_{\varepsilon'}({R}_n(f)) \|_\infty \leq 2^{-\alpha}B^d2^{-\alpha n} \binom{n+d}{d-1}+\frac{\varepsilon}{2}\leq \varepsilon.
	\end{align*}
We define $\mathbb{A}_\varepsilon$ as the minimal architecture of the deep ReLU neural network $\Phi_\varepsilon$ obtained by  parallelization of the networks $\{\Phi_\delta(\varphi_{\bk,\bs})\}_{|\bk|_1 \leq n,\, \bs\in Z(\bk)} $ with the output
$$		\Phi_\varepsilon(\bx)  = \sum_{|\bk|_1 \leq n} \sum_{\bs\in Z(\bk)}  \Phi_\delta(\varphi_{\bk,\bs}) (\bx),\quad \bx\in \IId.$$
Then $\Phi_\varepsilon(f)$ has the architecture for all $f\in \Uas$. From Lemma \ref{lem-Phi-Rnf} we have
	\begin{equation*}\label{eq-W-phi-Rn}
W(\mathbb{A}_\varepsilon) \leq  C d 2^{n}  \log(2d\varepsilon^{-1}B^d)  \binom{n+d-1}{d-1}.
	\end{equation*}
	
	From the choice of $n$ we have
	$$
	2d\varepsilon^{-1}B^d \leq d2^\alpha 2^{\alpha n}\binom{n+d}{d-1}^{-1}\leq   2^{\alpha n} 2d\binom{n+d}{d-1}^{-1} \leq 2^{\alpha n}\,.
	$$
By this and \eqref{eq-varepsilon} we get
	\begin{align*}
		W(\mathbb{A}_\varepsilon)
		&
		\leq  C d \bigg(2\varepsilon^{-1}B^d\binom{n+d-1}{d-1}\bigg)^{1/\alpha} \alpha n  \binom{n+d-1}{d-1}
		\\
		&\leq  C d \big(2\varepsilon^{-1}B^d\big)^{1/\alpha}  n  \binom{n+d-1}{d-1}^{\frac{1}{\alpha}+1}
		\\
		&
		\leq C d \big(\varepsilon^{-1}B^d\big)^{1/\alpha}  n  \bigg( \frac{(2n)^{d-1}}{(d-1)!}\bigg)^{\frac{1}{\alpha}+1}.
	\end{align*}
Now $h(n-1)\leq 2^{-\alpha n/2} $  and \eqref{eq-varepsilon} lead to 
$ 
\frac{\varepsilon}{2}  \leq  2^{-\alpha n/2}$ which implies $n\leq \frac{2}{\alpha} \log(2\varepsilon^{-1})
$. Therefore we get
\begin{align*}
	W(\mathbb{A}_\varepsilon)
	& \leq Cd \big(\varepsilon^{-1}B^d\big)^{1/\alpha} \log(2\varepsilon^{-1}) \bigg( \frac{(4\alpha^{-1}\log(2\varepsilon^{-1}) )^{d-1}}{(d-1)!}\bigg)^{\frac{1}{\alpha}+1}
	\\
	& = Cd \big(B^d\big)^{1/\alpha} \bigg( \frac{(4\alpha^{-1} )^{d-1}}{(d-1)!}\bigg)^{\frac{1}{\alpha}+1} \varepsilon^{-\frac{1}{\alpha}} \log(2\varepsilon^{-1})^{(d-1)(\frac{1}{\alpha}+1)+1} 
\end{align*}
and \eqref{eq-thm1-size} follows. We also have
	$$
	L(\mathbb{A}_\varepsilon) 
	\leq C \log d \log\big(d2\varepsilon^{-1}B^d\big)\leq C \alpha n \log d  \leq C \log d \log(2\varepsilon^{-1}).
	$$
\hfill
\end{proof}

\section{Deep ReLU network approximation - an adaptive method}\label{sec-adaptive-method}
In this section,  we explicitly construct an adaptive method of approximation with accuracy $\varepsilon > 0$  by deep ReLU neural networks of functions $f \in \Uas$. This method  reduces the computation complexity expressing as the size and the depth of the 
	 approximating deep ReLU networks comparing with the computation complexity of the nonadaptive method given  in Theorem \ref{thm-deep-NN1}.  As mentioned the univariate case was already considered in \cite{DDF.19} ($0<\alpha<1)$) and \cite{Ya17b} ($\alpha=1$), we focus our attention on multivariate case when $d\geq 2$. The main result of this section is read as follows.

	\begin{theorem}\label{thm-deep-NN}
		Let $d\in \NN$,  $d\geq 2$, $\alpha \in (0,1]$. Then there is  $\varepsilon_0 =\varepsilon_0(d,\alpha)\in (0,1/2]$ such that for  every $\varepsilon\in (0,\varepsilon_0)$ and for  every $f\in \Uas$ we can explicitly construct an adaptive deep ReLU neural network $\Phi_\varepsilon(f)$ so that
		\begin{equation*} \label{|f(cdot)-}
		\|f-\Phi_\varepsilon(f)\|_{\infty} \leq \varepsilon.
		\end{equation*}
		Moreover, we have
		\begin{equation} \label{size}
		W(\Phi_\varepsilon(f))\leq C_{\alpha} d^2\bigg(\frac{K_2^d}{(d-1)!}\bigg)^{\frac{2}{\alpha}+2} \varepsilon^{-
			\frac{1}{\alpha}}    \big(\log(2\varepsilon^{-1})\log  \log (2\varepsilon^{-1})\big)^{(1+\frac{1}{\alpha})(d-1)}
		\end{equation}
		and
		\begin{equation} \label{depth}
		L(\Phi_\varepsilon(f)) \leq C'_\alpha \varepsilon^{-\frac{1}{d\alpha}} (\log (2\varepsilon^{-1}))^{\frac{d-1-\alpha}{d\alpha}}  (\log\log(2\varepsilon^{-1}))^{\frac{(\alpha+1)(d-1)}{d\alpha}} ,
		\end{equation}
		where 
		\begin{equation*} \label{C_{d,alpha}}
		\begin{aligned}
		K_2
		:=  4(2^{\alpha+3}B)^{\frac{1}{2\alpha+2}} (\alpha^{-1}\log(2\alpha^{-1}) )^{1/2}
		\end{aligned}
		\end{equation*}
		and positive constants $C_{\alpha}, C'_{\alpha}$ depend  on $\alpha$ only. 
	\end{theorem}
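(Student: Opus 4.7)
The plan is to follow the approach of \cite{Ya17b,DDF.19} for the univariate case but adapted to multivariate mixed smoothness. The central idea is that the set $\Ss^{\alpha,d}(m)$ from Lemma \ref{lem:pattern} has only $3^{2^{m+1}\binom{m+d-1}{d-1}}$ elements, so any specific element $S_m(f)$ can be encoded by a string of $O(2^{m+1}\binom{m+d-1}{d-1})$ ternary symbols. A ``bit-extraction'' ReLU sub-network can then decode this string and compute the corresponding function values, using a number of nonzero weights proportional to the string length rather than to the string length times $\log(\varepsilon^{-1})$ as would be required if one stored each Faber coefficient independently. This is the source of the $\log \mapsto \log\log$ improvement over the nonadaptive bound \eqref{eq-thm1-size}. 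The tradeoff is that the bit-extraction circuit has depth growing as a power of $\varepsilon^{-1}$, which accounts for the growth in \eqref{depth}. Adaptivity enters because, for each given $f$, only the nonzero symbols of its encoding are realized by active weights.

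First, I would fix $m = m(\varepsilon,d,\alpha)$ so that the estimate in Lemma \ref{lem:pattern} gives $\|f-S_m(f)\|_\infty \leq \varepsilon/2$; as in the proof of Theorem \ref{thm-deep-NN1}, this forces $m \lesssim \alpha^{-1}\log(2\varepsilon^{-1})$ up to lower-order terms. It then suffices to approximate $S_m(f)$ to accuracy $\varepsilon/2$ in the $L_\infty$-norm by a ReLU network. Using the representation \eqref{S_f}, write $S_m(f)$ as a double sum, over $\bar{\bk}_1$ with $|\bar{\bk}_1|_1\leq m$ and $\bar{\bs}_1\in Z(\bar{\bk}_1)$, of the scaled products $\varphi_{\bar{\bk}_1,\bar{\bs}_1}(\bar{\bx}_1)\,S_{K_{\bar{\bk}_1,\bar{\bs}_1}(f)}(x_1)$ weighted by $2^{-\alpha(|\bar{\bk}_1|_1+d-1)}$, where each univariate factor lies in $\Ss^\alpha(m-|\bar{\bk}_1|_1)$.

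For each pair $(\bar{\bk}_1,\bar{\bs}_1)$ I would construct a ReLU sub-network that: (i) realizes $\varphi_{\bar{\bk}_1,\bar{\bs}_1}(\bar{\bx}_1)$ via Lemma \ref{lem:product} at a small error $\delta$; (ii) realizes the univariate factor $S_{K_{\bar{\bk}_1,\bar{\bs}_1}(f)}(x_1)$ via a bit-extraction gadget whose input is the ternary index of length at most $2^{m+1-|\bar{\bk}_1|_1}$ selecting the pattern in $\Ss^\alpha(m-|\bar{\bk}_1|_1)$ chosen by $f$; and (iii) multiplies these outputs via Lemma \ref{lem:multi}. I would then parallelize all sub-networks using Lemma \ref{lem:parallel} (or Lemma \ref{lem:special}) to obtain $\Phi_\varepsilon(f)$. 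Balancing $\delta$ against the bit-extraction resolution and aggregating over $\bar{\bk}_1,\bar{\bs}_1$, with the level counts $\binom{m+d-1}{d-2}$ and $|Z(\bar{\bk}_1)|=2^{|\bar{\bk}_1|_1}$, the summation identity \eqref{eq-DT20}, and the chosen $m$, yields the bounds \eqref{size} and \eqref{depth}.

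The main obstacle is the multivariate bit-extraction construction. In the univariate case a single pattern of length $O(2^{m+1})$ suffices, but here the total encoding is distributed across the roughly $\binom{m+d-1}{d-2}$ levels $\bar{\bk}_1$, each with its own per-level length $2^{m+1-|\bar{\bk}_1|_1}$; the extraction depth at each level must be balanced against this length, and the choices of precision $\delta$ in Lemmas \ref{lem:multi} and \ref{lem:product} must be tuned so that all approximation errors aggregate to at most $\varepsilon/2$. The precise accounting of how depths and sizes combine across levels is what produces the exponents $\tfrac{2}{\alpha}+2$ on $K_2^d/(d-1)!$ and the $\bigl(\log(2\varepsilon^{-1})\log\log(2\varepsilon^{-1})\bigr)^{(1+\tfrac{1}{\alpha})(d-1)}$ factor in \eqref{size}, as well as the $\varepsilon^{-1/(d\alpha)}$ term in \eqref{depth}. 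Verifying that such a bit-extraction gadget can actually be implemented with ReLU activations subject to these bounds, and that its contribution to the overall error can be controlled uniformly in $f\in\Uas$, is the heart of the argument.
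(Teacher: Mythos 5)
Your proposal takes a genuinely different route from the paper, and the route as written has a hole at its decisive step. You fix a single scale $m$ so that $\|f-S_m(f)\|_\infty\le\varepsilon/2$, which forces $m\sim\alpha^{-1}\log(2\varepsilon^{-1})$; at that scale the cardinality bound of Lemma \ref{lem:pattern} gives $|\Ss^{\alpha,d}(m)|\le 3^{2^{m+1}\binom{m+d-1}{d-1}}$ with an exponent of order $\varepsilon^{-1/\alpha}(\log\varepsilon^{-1})^{d-1}$, so the pattern set is far too large to enumerate and everything rests on the ``bit-extraction gadget'' that decodes a ternary string of length $2^{m+1-|\bar{\bk}_1|_1}$ within the claimed size and depth budget. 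You explicitly defer the construction and analysis of that gadget, and no lemma in the paper supplies such a decoder (every construction in the paper realizes functions coefficient-by-coefficient or pattern-by-pattern). Consequently the exponent $\frac{2}{\alpha}+2$, the constant $K_2$, the $\bigl(\log(2\varepsilon^{-1})\log\log(2\varepsilon^{-1})\bigr)^{(1+\frac{1}{\alpha})(d-1)}$ factor in \eqref{size}, and the depth \eqref{depth} are all asserted to emerge from a balancing that is never carried out; this is exactly ``the heart of the argument'' you acknowledge is missing.

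The paper avoids bit extraction entirely by a two-scale decomposition. It writes $f=R_n(f)+(f-R_n(f))$, handles $R_n(f)$ with the nonadaptive network of Lemma \ref{lem-Phi-Rnf}, and expands the remainder via \eqref{eq:representation} into terms $T_\bell(f)$ with $|\bell|_1\approx n\sim\alpha^{-1}\log(2\varepsilon^{-1})$. Each $T_\bell(f)$ is cut into rescaled localized pieces $T_{\bell,\bs}(f)\in\Uas$, and only these are fed to $S_m(\cdot)$; since the rescaling already contributes the factor $2^{-\alpha|\bell|_1}\approx\varepsilon$ to the error in \eqref{eq:Tkf}, the inner scale need only be $m\sim\log n\sim\log\log(2\varepsilon^{-1})$. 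Then $N_d(m)\le 2^{n/d}\approx\varepsilon^{-1/(d\alpha)}$ is small enough to build one subnetwork $\Phi_\varepsilon(S_\eta)$ per element of $\Ss^{\alpha,d}(m)$ and to reuse it across all translates $s_1\in\Lambda_\eta(\bar{\bs}_1)$ at cost $O(d|\Lambda_\eta(\bar{\bs}_1)|)$ by Lemma \ref{lem:concatinate}. This reuse is where adaptivity actually enters (the partition $\{\Lambda_\eta(\bar{\bs}_1)\}$ depends on $f$), the smallness of $m$ is what converts $\log$ into $\log\log$ in \eqref{size} (not bit extraction), and the serial combination of the $N_d(m)$ subnetworks is what produces the depth $\varepsilon^{-1/(d\alpha)}$ in \eqref{depth}. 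To salvage your single-scale plan you would need to construct and analyze the bit-extraction network from scratch and re-derive all constants; otherwise the two-scale decomposition is the way to close the gap.
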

	
	Let us explain the idea of the proof. Let $f \in \Uas$ and $\varepsilon \in (0,\varepsilon_0)$ ($\varepsilon_0$ will be specified latter) be given.  Using the writing
		\begin{equation*} \label{f=}
f  \ = \ R_n(f) \ + \  (f - R_n(f)),
	\end{equation*}
we  explicitly construct deep ReLU neural networks to approximate with accuracy $\varepsilon/2$ the  terms $R_n(f)$  and  $f - R_n(f)$ and evaluate the dimension-dependent computation complexity separately, and then take their sum to get an approximation with accuracy $\varepsilon$ to $f$ and its dimension-dependent computation complexity.
For approximation of the first term ${R}_n(f)$, we take the deep ReLU neural network  $\Phi_{\varepsilon/2}({R}_n(f))$ which has been constructed in Lemma \ref{lem-Phi-Rnf}. 

Thus, our main task is to explicitly construct a desired deep ReLU neural network $\Phi_{\varepsilon/2}\big(f - {R}_n(f)\big)$ for approximation of the second term $f- {R}_n(f)$.
 Our strategy is  to represent    the difference $f- {R}_n(f)$ in a special form and then approximate terms in this  representation by deep ReLU networks. To this end, we need some auxiliary preparation.
 	
%	we first construct the network $\Phi_{\varepsilon_1}\big({R}_n(f)\big)$ that approximates ${R}_n(f)$ with accuracy $\varepsilon_1$ and the networks $\Phi_{{\red {\varepsilon}'}}(F_{\bk_j})$ that approximate $F_{\bk_j}$ in  \eqref{eq:representation} with accuracy $\varepsilon_2'$ depending on $\varepsilon_2$. Then we approximate $f$ with the output 
%	\begin{equation}\label{eq-Phi-f}
%	\Phi_\varepsilon(f)(\bx):= \Phi_{\varepsilon_1}\big({R}_n(f)\big)(\bx) + \sum_{j=0}^{d-1} \sum_{|\bk_j|\leq n}   \Phi_{{\red {\varepsilon}'}}big(F_{\bk_j}\big)(\bx).
%	\end{equation} 
%	Here $\varepsilon_2$, $\varepsilon_1$ and $n$ are chosen depending on $\varepsilon$ such that 
%	\begin{equation}\label{eq:f-r-app}
%	\begin{aligned}
%	\| f - \Phi_\varepsilon(f)\|_\infty 
%	&\leq \big\| {R}_n(f) -  \Phi_{\varepsilon_1}\big({R}_n(f)\big) \big\|_{\infty} +  \sum_{j=0}^{d-1} \sum_{|\bk_j|\leq n}  \big\|  \Phi_{{\red {\varepsilon}'}}big(F_{\bk_j}\big)-F_{\bk_j}\big\|_{\infty}
%	\leq \varepsilon.
%	\end{aligned}  
%	\end{equation}
%	Then the size and depth of $\Phi_{\varepsilon}(f)$  are estimated explicitly in $d$ and $\varepsilon$ from the estimation of sizes and depths of $\Phi_{\varepsilon_1}\big({R}_n(f)\big)$ and $\Phi_{{\red {\varepsilon}'}}(F_{\bk_j})$. The network $\Phi_{\varepsilon_1}\big({R}_n(f)\big)$ was already constructed in Lemma \ref{lem-Phi-Rnf}.  
%	 
%	

For univariate functions $f \in \mathring{C}(\II)$, let the operator ${T}_k$, $k \in \NN_0$, be defined by 
\[
{T}_k(f):= f - {R}_{k-1}(f)
\]
with the operator ${R}_k$ defined as in \eqref{R_m-1} and  the convention ${R}_{-1}:=0$. From this definition we have ${T}_0$ is the identity operator.  Notice that for $f \in \mathring{U}^{\alpha,1}_{\infty}$, it holds the inequality $\|{T}_k(f)\|_{H^{\alpha}_{\infty}(\II)} \le 2$.

For a multivariate function $f \in \mathring{C}(\IId)$, the tensor product operator  ${T}_\bk$, $\bk=(k_1,\ldots,k_d) \in \NN_0^d$, is defined by
\[
{T}_{\bk}(f) := \prod_{j=1}^d {T}_{k_j}(f),
\]
where the univariate operator ${T}_{k_j}$ is applied to the univariate function $f$ by considering $f$ as a function of variable $x_j$ with the other variables held fixed.

For $n\in \NN$, it has been shown in \cite{DN21} that  $f- {R}_n(f)$ can be represented in the following special form 
\begin{equation}\label{eq:representation}
	\begin{aligned}
		f-{R}_n(f)& 
		= \sum_{j = 0}^{d-1} \ \sum_{|\bk_j|_1 \leq n}F_{\bk_j},
	\end{aligned}
\end{equation}	
where $F_{\bk_0}
:= 
{T}_{(n+1)\bee^{1}}$ and
$$
F_{\bk_j}
:= 
{T}_{(n+1-|\bk_j|_1)\bee^{j+1}}\big(q_{\bk_j}(f)\big),\ \ j = 1,\ldots,d-1,
$$
 or equivalently,
\begin{equation}\label{F_k_j}
F_{\bk_j} 
=\prod_{i=1}^j \big({T}_{(k_i-1)\bee^j}-{T}_{k_i\bee^j}\big) {T}_{(n+1-|\bk_j|_1)\bee^{j+1}} (f), \ \ \ \ j = 1,\ldots,d-1.
\end{equation}	
We 
shall explicitly construct deep ReLU neural networks $\Phi_{{{\varepsilon}'}}(F_{\bk_j})$ to approximate each term $F_{\bk_j}$ in the sum in \eqref{eq:representation}.  Due to \eqref{F_k_j} this is reduced to construct deep ReLU networks that approximate $T_{\bk}(f)$, $\bk\in \NN_0^d$. Put 
\[
I_{\bk,\bs}
:= 
{\mathlarger{\mathlarger{\mathlarger{\mathlarger{\times}}}}}_{j=1}^d I_{k_j, s_j} 
= 
{\mathlarger{\mathlarger{\mathlarger{\mathlarger{\times}}}}}_{j=1}^d  [2^{-k_j}s_j,2^{-k_j}(s_j+1)], \ \bk \in \NN_0^d, \ \bs \in Z(\bk),
\]
and
$$
{T}_{\bk,\bs}(f)(\bx):=  2^{\alpha|\bk|_1 - d}\big({T}_{\bk}(f)\chi_{I_{\bk,\bs}}\big)\big( 2^{-\bk}(\bx+\bs)\big).
$$ 
Since $\supp \big({T}_{\bk}(f)\chi_{I_{\bk,\bs}}\big) \subset I_{\bk,\bs}$ and $\|{T}_{\bk}(f)\chi_{I_{\bk,\bs}}\|_{\Lad}\leq 2^d$, we have that
\begin{equation*} \label{T_k,s}
	\supp \big({T}_{\bk,\bs}(f) \big)\subset \IId,\qquad  {T}_{\bk,\bs}(f) \in \Uas.
\end{equation*}
Take the function ${S}_m({{T}_{\bk,\bs}(f)})$ defined as in \eqref{S_f} for ${T}_{\bk,\bs}(f) \in \Uas$.
By Lemma \ref{lem:pattern} it holds the estimate
\[
\big\|{T}_{\bk,\bs}(f)-{S}_m({{T}_{\bk,\bs}(f)})\big\|_{\infty} 
\leq 
B^{d}  2^{-\alpha m} \binom{m+d}{d-1}.
\] 
Define 
\begin{equation}\label{eq-S-Tkf}
	{S}_{\bk,m}(f)(\bx):= 2^{-\alpha |\bk|_1 + d}\sum_{\bs\in Z(\bk)} {S}_m\big({{T}_{\bk,\bs}(f)}\big)\big(2^\bk\bx-\bs\big).
\end{equation}
We then get
\begin{equation*}
	\begin{aligned}
		\big\| {T}_{\bk}(f) - {S}_{\bk,m}(f) \big\|_{\infty}
		& =\Bigg \| \sum_{\bs \in Z(\bk)} \Big[
		{T}_{\bk}(f)\chi_{I_{\bk,\bs}}(\cdot) -2^{-\alpha |\bk|_1 + d}  {S}_m\big({{T}_{\bk,\bs}(f)}\big)\big(2^\bk\cdot-\bs\big)\Big] \Bigg\|_{\infty}  \\
		&= 2^{-\alpha |\bk|_1 + d} \Bigg \| \sum_{\bs \in Z(\bk)} \Big[
		{T}_{\bk,\bs} (f) -  {S}_m\big({{T}_{\bk,\bs}(f)}\big)\Big]\big(2^\bk \cdot-\bs\big) \Bigg\|_{\infty} . 
	\end{aligned}
\end{equation*}
Since support of 
$ {T}_{\bk,\bs} (f) -  {S}_m\big({{T}_{\bk,\bs}(f)}\big)$ is contained in $ \IId$, we finally obtain
\begin{equation}\label{eq:Tkf}
	\begin{aligned}
		\big\| {T}_{\bk}(f) - {S}_{\bk,m}(f) \big\|_{\infty}
		&\leq   (2B)^{d} \big(2^{m} 2^{|\bk|_1}\big)^{-\alpha}\binom{m+d}{d-1}.
	\end{aligned}
\end{equation}

%{\red In what follows, with  abusion we use $\dung{\Phi_\varepsilon(f)}$ to denote a deep neural network depending on a function $f$ and  $\dung{\Phi_\varepsilon(f)}(\cdot)$ the output function.}
 Considering $ {S}_{\bk,m}(f)$ as an intermediate approximation of 
 $T_{\bk}(f)$ we shall construct deep ReLU networks approximating  $ {S}_{\bk,m}(f)$. Since ${S}_{\bk,m}(f)$ is a sum of  functions in $\Ss^{\alpha,d}(m)$, we shall construct a deep ReLU neural network $\Phi_\varepsilon(S)$ for  approximating  $S\in \Ss^{\alpha,d}(m)$ with accuracy $\varepsilon$ and estimate its size.

\begin{lemma}\label{lem-Phi-S}
Let  $d\in \NN$, $d\geq 2$, $m\in \NN$, $\alpha\in (0,1]$, and $ \varepsilon\in (0,1)$. Then for  every $S\in \Ss^{\alpha,d}(m)$, we can explicitly construct a deep ReLU neural network $\Phi_\varepsilon(S)$   so that $\supp  \Phi_\varepsilon(S)\subset \IId$ and 
\begin{align} \label{|S-Phi_S|}
\|S-\Phi_\varepsilon(S) \|_{\infty}  
& \leq    \varepsilon.
\end{align}
Moreover, there is a positive constant $C$ such that
\begin{align} \label{W(Phi_S)}
W(\Phi_\varepsilon(S))\leq Cd\log d 2^m \binom{m+d-1}{d-1} \log(d B^{d}\varepsilon^{-1}),
\quad
L(\Phi_\varepsilon(S)) 
\leq   C2^m\log d \log(d B^{d} \varepsilon^{-1}),
\end{align} 
 where $B$ is given in Lemma \ref{thm-DT20}.
\end{lemma}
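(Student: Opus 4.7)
The plan is to expand an arbitrary $S\in\Ss^{\alpha,d}(m)$ as a linear combination of tensor-product hat functions and realize each by a small ReLU subnetwork via Lemma \ref{lem:product}. Substituting the univariate representation \eqref{approx:S_f} (applied to each profile $S_{K_{\bar{\bk}_1,\bar{\bs}_1}(f)}$) into the definition \eqref{S_f} collapses the prefactors to $2^{-\alpha(m+d)}$ and gives
\begin{equation*}
S(\bx)=2^{-\alpha(m+d)}\sum_{|\bar{\bk}_1|_1\le m}\sum_{\bar{\bs}_1\in Z(\bar{\bk}_1)}\sum_{s_1\in Z_*(m-|\bar{\bk}_1|_1)}l_{\bar{\bk}_1,\bar{\bs}_1,s_1}\,\varphi_{\bar{\bk}_1,\bar{\bs}_1}(\bar{\bx}_1)\,\varphi^*_{m-|\bar{\bk}_1|_1,s_1}(x_1),
\end{equation*}
where the integer coefficients satisfy $|2^{-\alpha(m+d)}l_{\bar{\bk}_1,\bar{\bs}_1,s_1}|\le 2^{1-\alpha(|\bar{\bk}_1|_1+d-1)}$, by the uniform bound $\|S_{K_{\bar{\bk}_1,\bar{\bs}_1}(f)}\|_\infty\le 2$ implicit in \eqref{eq-f-sf}. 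Each product is a $d$-variate tensor hat (modulo trivial relabeling of the $\varphi^*$ factor), so Lemma \ref{lem:product} produces a subnetwork $\Phi_\delta$ of $L_\infty$-error $\delta$, support contained in the support of the original hat, size $O(d\log(d\delta^{-1}))$ and depth $O(\log d\log(d\delta^{-1}))$.

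Next I would combine the subnetworks in a two-stage scheme keyed to the level $\ell=|\bar{\bk}_1|_1$. For each $(\bar{\bk}_1,\bar{\bs}_1)$ at level $\ell$, concatenate the $\le 2^{m-\ell+1}$ subnetworks in $s_1$ by the special construction (Lemma \ref{lem:special}); this produces an inner block of common depth $\sim 2^{m-\ell}\log d\log(d\delta^{-1})$. Because all inner blocks of level $\ell$ share this depth, I parallelize them via Lemma \ref{lem:parallel} paying no padding, and then concatenate the $m+1$ level blocks across $\ell$ again by Lemma \ref{lem:special}. The geometric sum $\sum_{\ell=0}^m 2^{m-\ell}\le 2^{m+1}$ keeps the total depth at $C 2^m\log d\log(d\delta^{-1})$, and choosing $\delta=cB^{-d}\varepsilon$ for a suitable constant $c$ matches \eqref{|S-Phi_S|}: indeed Lemma \ref{lem:product}'s support preservation forces at most $O(2^d)$ subnetworks per level $\bar{\bk}_1$ to be active at a given $\bx\in\IId$, so
\begin{equation*}
\|S-\Phi_\varepsilon(S)\|_\infty\le C 2^d\delta\sum_{\ell=0}^m 2^{-\alpha\ell}\binom{\ell+d-2}{d-2}\le CB^d\delta\le\varepsilon
\end{equation*}
by \eqref{eq-DT20}. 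The size bound \eqref{W(Phi_S)} follows by accumulating inner sizes and the additive $(d+1)L$ overhead of Lemma \ref{lem:special}, using $\sum_\ell 2^\ell\binom{\ell+d-2}{d-2}\cdot 2^{m-\ell}=2^m\binom{m+d-1}{d-1}$.

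The main obstacle is the two-stage grouping: pure parallelization over all $(\bar{\bk}_1,\bar{\bs}_1,s_1)$ triples would incur $\Omega(2^{2m})$ padding because the inner depths vary by a factor $2^m$, while pure special concatenation would inflate the depth to $\Omega\bigl(2^m\binom{m+d-1}{d-1}\bigr)$. Grouping by $\ell$ lets the equal-depth branches parallelize for free and lets the $2^{m-\ell}$ geometric decay telescope through the outer special construction, which is exactly what produces the clean $2^m\log d\log(\cdot)$ depth and $d\log d\,2^m\binom{m+d-1}{d-1}\log(\cdot)$ size demanded by the statement.
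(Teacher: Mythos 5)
Your construction is correct in substance but follows a genuinely different route from the paper's. The paper keeps each univariate profile $S_{\bar{\bk}_1,\bar{\bs}_1}\in\Ss^{\alpha}(m-|\bar{\bk}_1|_1)$ intact and realizes it \emph{exactly} as a piecewise-linear function by a deep, narrow network of size and depth $O(2^{m-|\bar{\bk}_1|_1})$ (citing \cite[Theorem 3.1]{DDF.19}), multiplies it by the $(d-1)$ remaining hat factors through the product network $\Phi_P$ of Lemma \ref{lem:multi}, combines over $\bar{\bs}_1$ by the special construction, and finally parallelizes over $\bar{\bk}_1$; the $2^m$ in the depth bound comes from the serial realization of the longest profile, not from stacking levels. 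You instead expand each profile via \eqref{approx:S_f} into $\le 2^{m-\ell+1}$ shifted hats, approximate every resulting $d$-variate tensor hat by Lemma \ref{lem:product}, and recover the same depth budget through your three-stage grouping (special over $s_1$, free parallelization over equal-depth branches at fixed level $\ell$, special over $\ell$), with the geometric sum $\sum_\ell 2^{m-\ell}$ replacing the paper's single long chain. Both yield \eqref{W(Phi_S)}; your version trades the external piecewise-linear realization theorem for a more delicate combination scheme, and requires the (trivial but worth stating) observation that Lemma \ref{lem:product} applies verbatim to products involving the half-shifted hats $\varphi^*_{k,s}$, which are not literally of the form \eqref{hat-function}.

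Two details need tightening. First, your active-term count $O(2^d)$ per level forces $c\sim 2^{-d}$ in $\delta=cB^{-d}\varepsilon$, and then $\log(d\delta^{-1})$ acquires an additive $d$ that is \emph{not} absorbed into $C\log(dB^{d}\varepsilon^{-1})$ when $\alpha=1$ (so $B=1$) and $d\gg\log(d\varepsilon^{-1})$; you should instead use, as the paper does, that for fixed $\bar{\bk}_1$ the supports of the $\varphi_{\bar{\bk}_1,\bar{\bs}_1}$ have disjoint interiors and both the hats and their approximants vanish on the boundaries of those supports, so only the factor $2$ from overlapping consecutive $\varphi^*_{m-\ell,s_1}$ survives and $\delta = cB^{1-d}\varepsilon$ with an absolute $c$ suffices. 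Second, the uniform coefficient bound should be $\|S_{K_{\bar{\bk}_1,\bar{\bs}_1}(f)}\|_\infty\le 4$ (as in the paper, via \eqref{eq-f-sf}) rather than $2$; this only changes constants.
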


\begin{proof}
By Lemma \ref{lem:pattern}, for   every function $S\in \Ss^{\alpha,d}(m)$, there is a function $f\in \Uas$ such that 
\begin{equation*} 		
S(\bx) =   
{S}_m(f)(\bx)= \sum_{|\bar{\bk}_1|_1\leq m }  2^{-\alpha (|\bar{\bk}_1|_1 + d - 1)}
\ \sum_{\bar{\bs}_1\in Z(\bar{\bk}_1)} 
\varphi_{\bar{\bk}_1,\bar{\bs}_1}(\bar{\bx}_1) S_{{\bar{\bk}_1,\bar{\bs}_1}}(x_1),
\end{equation*}
where 
$
S_{{\bar{\bk}_1,\bar{\bs}_1}}  := S_{K_{\bar{\bk}_1,\bar{\bs}_1}(f)} \in \Ss^{\alpha}(m - |\bar{\bk}_1|_1).
$
Since $S_{\bar{\bk}_1,\bar{\bs}_1}$ is a piecewise linear continuous function, see \eqref{Ss^{alpha}(m)} and \eqref{approx:S_f},
according to  \cite[Theorem 3.1]{DDF.19} we can explicitly construct a deep ReLU neural network $\Phi\big(S_{\bar{\bk}_1,\bar{\bs}_1}\big)$  with one-dimensional input so that 
$\Phi\big(S_{\bar{\bk}_1,\bar{\bs}_1}\big)(x_1)=S_{{\bar{\bk}_1,\bar{\bs}_1}}(x_1) $, $x_1 \in \II$,  and 
\begin{equation}\label{eq-W-Sks}
W\big(\Phi\big(S_{\bar{\bk}_1,\bar{\bs}_1}\big)\big)
\leq
C 2^{m-|\bar{\bk}_1|_1},\qquad L\big(\Phi\big(S_{\bar{\bk}_1,\bar{\bs}_1}\big)\big)
\leq 
C 2^{m-|\bar{\bk}_1|_1}.
\end{equation}
Each univariate function $\varphi_{k_j,s_j}$ in the tensor product 
$
 \varphi_{\bar{\bk}_1,\bar{\bs}_1} = \otimes_{j=2}^d\varphi_{k_j,s_j}
 $
  can be expressed as an output of a neural network $\Phi(\varphi_{k_j,s_j})$ with one-dimensional input, deep 3 and 8 weights.  Adding layers (with one node in each layer) putting forward $x_j$ to each network $\Phi(\varphi_{k_j,s_j})$ such that it has the length $L\big(\Phi\big(S_{\bar{\bk}_1,\bar{\bs}_1}\big)\big)$. We still denote these new networks by $\Phi(\varphi_{k_j,s_j})$. Then we obtain
$$
W(\Phi(\varphi_{k_j,s_j})) \leq C2^{m-|\bar{\bk}_1|_1}.
$$  
 We approximate the $d$-univariate function 
$ \varphi_{\bar{\bk}_1,\bar{\bs}_1}(\bar{\bx}_1)S_{{\bar{\bk}_1,\bar{\bs}_1}}(x_1)$ 
by the output of the network ${\Phi}_{\bar{\bk}_1,\bar{\bs}_1}$ with $d$-dimensional input which is explicitly constructed as a concatenation of the networks 
$\Phi\big(S_{\bar{\bk}_1,\bar{\bs}_1}\big)$, $\Phi(\varphi_{k_j,s_j})$, $j=2,\ldots,d$, with product network $\Phi_P$ in Lemma \ref{lem:multi}. With $\delta= \varepsilon  B^{1-d}$ in Lemma \ref{lem:multi} we have 
\begin{equation} \label{|varphi_}
 \big\|\varphi_{\bar{\bk}_1,\bar{\bs}_1}S_{{\bar{\bk}_1,\bar{\bs}_1}} - {\Phi}_{\bar{\bk}_1,\bar{\bs}_1}\big\|_{\infty}\leq  \varepsilon B^{1-d}.
\end{equation}
Since  $|\varphi_{\bar{\bk}_1,\bar{\bs}_1}(\bar{\bx}_1)|\leq 1 $ for $\bar{\bx}_1\in \II^{d-1}$ and $|S_{{\bar{\bk}_1,\bar{\bs}_1}}(x_1)|\leq 4$ for $x_1\in \II$ by \eqref{eq-f-sf}, from Lemmata \ref{lem-concatenation}, \ref{lem:multi}   and \eqref{eq-W-Sks}  we derive that
\begin{equation} \label{W(tilde{Phi}}
\begin{aligned}
W({\Phi}_{\bar{\bk}_1,\bar{\bs}_1})
&\leq C\Bigg(\sum_{j=2}^dW\big(\Phi(\varphi_{k_j,s_j})\big) + W\big(\Phi(S_{\bar{\bk}_1,\bar{\bs}_1})\big)+ W(\Phi_P)\Bigg)
\\
& \leq C d\big(2^{m-|\bar{\bk}_1|_1}  + \log(d B^{d}\varepsilon^{-1} )\big),
\end{aligned}
\end{equation}
and
\begin{equation}\label{L(tilde{Phi}}
	L({\Phi}_{\bar{\bk}_1,\bar{\bs}_1})\leq L\big(\Phi\big(S_{\bar{\bk}_1,\bar{\bs}_1}\big)\big) + L(\Phi_P)  \leq C\big( 2^{m-|\bar{\bk}_1|_1} + \log d \log(d B^{d}\varepsilon^{-1})\big).
\end{equation}
Moreover $\supp ({\Phi}_{\bar{\bk}_1,\bar{\bs}_1})\subset \supp (\varphi_{\bar{\bk}_1,\bar{\bs}_1}S_{{\bar{\bk}_1,\bar{\bs}_1}})$ by Lemma \ref{lem:multi}.

Let the network ${\Phi}_{\bar{\bk}_1}$  with output 
$$
{\Phi}_{\bar{\bk}_1}(\bx)= \sum_{\bar{\bs}_1\in Z(\bar{\bk}_1)}  {\Phi}_{\bar{\bk}_1,\bar{\bs}_1}(\bx)
$$ 
 be explicitly constructed as  a  combination of the networks $\big\{{\Phi}_{\bar{\bk}_1,\bar{\bs}_1} \big\}_{\bar{\bs}_1\in Z(\bar{\bk}_1)}$ by  the special construction. Then  by Lemma \ref{lem:special},  \eqref{W(tilde{Phi}} and \eqref{L(tilde{Phi}} we obtain that
\begin{equation} \label{L(tilde{Phi_k}}
	\begin{aligned}
L({\Phi}_{\bar{\bk}_1})
& \leq \sum_{\bar{\bs}_1\in Z(\bar{\bk}_1)}L({\Phi}_{\bar{\bk}_1,\bar{\bs}_1})
 \leq C 2^{|\bar{\bk}_1|_1} \big(2^{m-|\bar{\bk}_1|_1}  + \log d \log(d B^{d}\varepsilon^{-1} )\big)
 \\
 &
\leq 
C2^m\log d \log(d B^{d}\varepsilon^{-1}) 
\end{aligned}
\end{equation}
and
\begin{equation} \label{W(tilde{Phi_k}}
\begin{aligned}
	W({\Phi}_{\bar{\bk}_1})
	&\leq \sum_{\bar{\bs}_1\in Z(\bar{\bk}_1)}W({\Phi}_{\bar{\bk}_1,\bar{\bs}_1}) 
	+ 
(d+1)L({\Phi}_{\bar{\bk}_1})
	\\
&\leq C d 2^{|\bar{\bk}_1|_1} \big(2^{m-|\bar{\bk}_1|_1}  + \log(d B^{d}\varepsilon^{-1}) \big) + C(d\log d) 2^m \log(d B^{d}\varepsilon^{-1}) 
\\
&\leq C(d\log d) 2^m \log(d B^{d}\varepsilon^{-1}).
\end{aligned}
\end{equation}
Since $\bx \in \IId$, we can construct a standard network with the same output as ${\Phi}_{\bar{\bk}_1}$ and the estimates \eqref{L(tilde{Phi_k}} and \eqref{W(tilde{Phi_k}} hold, see \eqref{eq-special-l} and \eqref{eq-special-w}. We still denote this network by ${\Phi}_{\bar{\bk}_1}$. 
Now we define the network $\Phi_\varepsilon(S)$ as  a parallelization of the networks $({\Phi}_{\bar{\bk}_1})_{|\bar{\bk}_1|_1\leq m}$ with output
$$
\Phi_\varepsilon(S)(\bx)=\sum_{|\bar{\bk}_1|_1\leq m}2^{-\alpha (|\bar{\bk}_1|_1 + d - 1)}{\Phi}_{\bar{\bk}_1}(\bx).
$$
 Since $\supp \big({\Phi}_{\bar{\bk}_1,\bar{\bs}_1}\big)\subset \supp (\varphi_{\bar{\bk}_1,\bar{\bs}_1}S_{{\bar{\bk}_1,\bar{\bs}_1}})$ and for a given $\bar{\bk}_1$,
$\supp (\varphi_{\bar{\bk}_1,\bar{\bs}_1}S_{{\bar{\bk}_1,\bar{\bs}_1}})$ and $\supp (\varphi_{\bar{\bk}_1,\bar{\bs}'_1}S_{{\bar{\bk}_1,\bar{\bs}'_1}})$ are disjoint if $\bar{\bs}'_1 = \bar{\bs}_1$,
it holds by \eqref{|varphi_} and \eqref{eq-DT20}  that
\begin{align*}
\|S-\Phi_\varepsilon(S) \|_{\infty} 
& \leq  
\Bigg\|\sum_{|\bar{\bk}_1|_1 \leq m}\sum_{\bar{\bs}_1\in Z(\bar{\bk}_1)}2^{-\alpha (|\bar{\bk}_1|_1 + d - 1)}|\varphi_{\bar{\bk}_1,\bar{\bs}_1}S_{{\bar{\bk}_1,\bar{\bs}_1}} - {\Phi}_{\bar{\bk}_1,\bar{\bs}_1} |  \Bigg\|_{\infty}
\\
 &   =  
\sum_{|\bar{\bk}_1|_1 \leq m}2^{-\alpha (|\bar{\bk}_1|_1 + d - 1)}\max_{\bar{\bs}_1\in Z(\bar{\bk}_1)}\big\|\varphi_{\bar{\bk}_1,\bar{\bs}_1}S_{{\bar{\bk}_1,\bar{\bs}_1}} - {\Phi}_{\bar{\bk}_1,\bar{\bs}_1} \big\|_{\infty}
\\
& \leq \sum_{|\bar{\bk}_1|_1
	\leq m} 2^{-\alpha (|\bar{\bk}_1|_1 + d - 1)} \varepsilon B^{1-d} 
\\
&= \varepsilon  (1-2^{-\alpha})^{d-1} \sum_{\ell=0}^m 2^{-\ell \alpha} \binom{\ell+d-2}{d-2} \leq  \varepsilon.
\end{align*}
By Lemma \ref{lem:parallel} and \eqref{L(tilde{Phi_k}}, \eqref{W(tilde{Phi_k}} we obtain 
\begin{align*} 
W(\Phi_\varepsilon(S))\leq 3\big|\{\bar{\bk}_1:|\bar{\bk}_1|_1\leq m\}\big|\max_{|\bar{\bk}_1|_1\leq m}W({\Phi}_{\bar{\bk}_1})
&
 \leq C(d\log d)2^m \binom{m+d-1}{d-1}\log(d B^{d}\varepsilon^{-1}),
\end{align*} 
and
\begin{align*}
L(\Phi_\varepsilon(S))\leq \max_{|\bar{\bk}_1|_1\leq m}L({\Phi}_{\bar{\bk}_1})
&
\leq   C (\log d) 2^m\log(d B^{d}\varepsilon^{-1}).
\end{align*} 
Finally, the inclusion $\supp \Phi_\varepsilon(S) \subset \IId$ follows from Lemmata \ref{lem:multi} and \ref{lem:product}. 
\hfill
\end{proof}

The following result is a generalization of \cite[Lemma 5.1]{DDF.19} to $d$-dimensional case.

\begin{lemma}\label{lem:concatinate}
	Let $k\in \NN$, $\Lambda\subset Z(k)$ and $j\in \{1,\ldots,d\}$. Let $\Phi$ be a deep ReLU network with input dimension $d$ such that $\supp\Phi\subset  \IId$. Denote 
	$$
	f(\bx):=\sum_{s\in \Lambda} \Phi(x_1,\ldots,2^{k}x_j-s,\ldots,x_d),\quad \bx \in  \IId.
	$$  
	Then we can explicitly construct a deep ReLU network  $\Phi_\Lambda$ with output $f(\bx)$ and  
	\begin{equation}\label{eq-Phi-Lambda}
		W(\Phi_\Lambda)\leq C(d|\Lambda|+ W(\Phi)),\qquad L(\Phi_\Lambda)\leq 5+L(\Phi).
	\end{equation}
	%	Moreover, we can explicitly construct a deep neural network architecture $\mathbb{A}^*$  so that 
	%	$$
	%	W(\mathbb{A}^*) \leq C(d2^k+W(\Phi)),\qquad L(\mathbb{A}^*) \leq 5+L(\Phi)
	%	$$
	%	and
	%	$\Phi_\Lambda$ has architecture $\mathbb{A}^*$ for all $\Lambda\subset Z(k)$.  
\end{lemma}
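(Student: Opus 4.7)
The plan is to realize $\Phi_\Lambda$ as the concatenation $\Phi \circ \Psi$, where $\Psi$ is a short preprocessing ReLU network of constant depth and size $O(d+|\Lambda|)$, and then invoke Lemma \ref{lem-concatenation} to obtain the bounds in \eqref{eq-Phi-Lambda}. The structural fact I would exploit is that, since $\supp \Phi \subset \IId$ and $\Phi$ is continuous (ReLU networks produce continuous functions), $\Phi$ vanishes on $\partial \IId$; consequently, the shifted copies $\Phi(x_1, \ldots, 2^k x_j - s, \ldots, x_d)$ for $s \in \Lambda$ have essentially disjoint supports in the $x_j$-direction (each supported in the interval $I_s := [s\cdot 2^{-k}, (s+1)\cdot 2^{-k}]$) and vanish at the endpoints of these intervals. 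Hence the sum defining $f(\bx)$ reduces pointwise to a single term (or to $0$) depending on which $I_s$ contains $x_j$.

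I would construct $\Psi : \RR^d \to \RR^d$ so that it sends $\bx$ to $(x_1, \ldots, x_{j-1}, g(x_j), x_{j+1}, \ldots, x_d)$, where the piecewise linear gate function $g : \II \to \RR$ is designed so that $g(x) = 2^k x - s$ on each $I_s$ with $s \in \Lambda$, while on each gap interval $I_{s'}$ with $s' \notin \Lambda$ the graph of $g$ is routed outside $[0,1]$ so that $\Phi$ applied to the preprocessed input returns $0$. Continuity at the breakpoints $s\cdot 2^{-k}$ is absorbed by the vanishing of $\Phi$ at the boundary values $y_j \in \{0,1\}$. A continuous piecewise linear function with $O(|\Lambda|)$ pieces admits an explicit representation as a linear combination of $O(|\Lambda|)$ ReLU units realized in depth at most $2$. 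The remaining coordinates $x_i$, $i \neq j$, are carried forward by identity channels using $x_i = \sigma(x_i) - \sigma(-x_i)$, contributing $O(d)$ additional weights over a depth of $2$. Altogether I get $L(\Psi) \leq 3$ and $W(\Psi) \leq C(d+|\Lambda|) \leq Cd|\Lambda|$.

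Once $\Phi(\Psi(\bx)) = f(\bx)$ is verified for all $\bx \in \IId$, I would apply Lemma \ref{lem-concatenation} to the pair $(\Psi, \Phi)$ to form $\Phi_\Lambda := \Phi \circ \Psi$, obtaining $L(\Phi_\Lambda) = L(\Psi) + L(\Phi) \leq 3 + L(\Phi) \leq 5 + L(\Phi)$ and $W(\Phi_\Lambda) \leq 2W(\Psi) + 2W(\Phi) \leq C(d|\Lambda| + W(\Phi))$, which yields \eqref{eq-Phi-Lambda} after absorbing constants into $C$.

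The hardest step will be the explicit design of $g$: it must be a single continuous piecewise linear function that simultaneously reproduces the correct linear shift $2^k x - s$ on each $\Lambda$-interval $I_s$ and steers the composition $\Phi(x_1,\ldots,g(x_j),\ldots,x_d)$ to $0$ on every gap interval $I_{s'}$, $s' \notin \Lambda$, with consistency at the breakpoints dictated by the boundary-vanishing of $\Phi$. This is the multivariate extension of the one-dimensional construction of \cite[Lemma 5.1]{DDF.19}; the new ingredient in dimension $d$ is only bookkeeping for the $d-1$ passed-through coordinates, while the combinatorial core (routing the shifts through the boundary of $\IId$) is essentially one-dimensional in the variable $x_j$.
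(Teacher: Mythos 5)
Your architecture $\Phi_\Lambda=\Phi\circ\Psi$ with a single continuous piecewise linear gate $g$ cannot work once $|\Lambda|\ge 2$, and the obstruction sits exactly at the step you flag as ``the hardest'': no continuous $g$ with the required properties exists. On each interval $I_s=[2^{-k}s,2^{-k}(s+1)]$ with $s\in\Lambda$ you need $g(x)=2^kx-s$, so $g$ reaches the value $1$ at the right endpoint of $I_s$; before the next $\Lambda$-interval it must return to the value $0$. By the intermediate value theorem $g$ then sweeps through all of $(0,1)$ on some subinterval of the gap, and there $\Phi(x_1,\ldots,g(x_j),\ldots,x_d)$ is $\Phi$ evaluated at \emph{interior} points of $\IId$, where the hypothesis $\supp\Phi\subset\IId$ gives you nothing; the composition therefore produces spurious nonzero values on gap intervals where $f$ must vanish. (If $s$ and $s+1$ both lie in $\Lambda$, the situation is worse still: $g$ would have to jump from $1$ to $0$ at $2^{-k}(s+1)$, i.e.\ be discontinuous.) The boundary-vanishing of $\Phi$ only helps at the two values $g=0$ and $g=1$, not on the open segment in between, so it cannot ``absorb'' the transition as you claim.

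The paper's proof circumvents precisely this by using \emph{two} shifted copies of $\Phi$ rather than one. After splitting $\Lambda$ into the three residue classes mod $3$ (so that consecutive elements are at least $3$ apart) and reducing to $\Phi\ge 0$ via $\Phi=\sigma(\Phi)-\sigma(-\Phi)$, it realizes $f$ as $\sigma\bigl(\Phi(\sum_{s\in\Lambda}H_s(x_1),\bar{\bx}_1)-\Phi(1-\sum_{s\in\Lambda}H_{s+1}(x_1),\bar{\bx}_1)\bigr)$ with hat functions $H_s$: on the descent interval $I_{s+1}$ the two arguments coincide and the two evaluations of $\Phi$ cancel, while on the ascent interval $I_{s+2}$ the difference is $\le 0$ and is killed by the outer $\sigma$ (this is where $\Phi\ge 0$ is needed). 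This cancellation between two copies of $\Phi$, together with the mod-$3$ and sign decompositions, is the idea your single-composition construction is missing; the rest of your outline (identity channels for the other $d-1$ coordinates, the concatenation lemma, the $O(d|\Lambda|)$ weight count) is fine but does not repair this core defect.
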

\begin{proof} Without loss of generality we assume that $j=1$. Set $H_{2^{k}-1}(t):= \sigma(t-2^{-k}s)/(1-2^{-k}s)$, $H_{2^k}(t) := 0$ and 
	$H_s:=\varphi_{k,s+1}^*$ for $s\in Z(k)\setminus\{ 2^k-1\}$, where  	$\varphi_{k,s+1}^*$ is defined as in \eqref{phi^*_ks}. Let 
	$$
	Z_i(k):=\{s\in Z(k): s=3r+i,\, r\in \N_0\},\ \ i=0,1,2.
	$$
	To make the proof simple, we divide it into  several cases of $\Lambda$ and $\Phi(\cdot)$.
 
	{\it Case 1.} The case $\Lambda\subset Z_i(k)$ for some $i\in \{0,1,2\}$ and $\Phi(\bx)\geq 0$ for all $\bx\in  \IId$. We will show that 
	\begin{equation}\label{eq-f(x)}
		f(\bx)=\sigma \Bigg(\Phi\bigg(\sum_{s\in \Lambda} H_s(x_1),\bar{\bx}_1\bigg) -\Phi\bigg(1-\sum_{s\in \Lambda} H_{s+1}(x_1),\bar{\bx}_1\bigg) \Bigg),
	\end{equation} 
	for all $\bx\in\IId$. Indeed, if $x_1\not \in \cup_{s\in \Lambda} [2^{-k}s,2^{-k}(s+3)]$ we have 
	$\sum_{s\in \Lambda} H_{s+1}(x_1)=  \sum_{s\in \Lambda} H_s(x_1)=0$. Since $\supp \Phi(\cdot) \subset \IId$  we get
	$$
	f(\bx)
	=0=
	\sigma\big(\Phi\big(0,\bar{\bx}_1\big) -\Phi\big(1,\bar{\bx}_1\big) \big).
	$$
	If $x_1\in [2^{-k}s_0,2^{-k}(s_0+1)]$ for some $ s_0\in \Lambda$ we have $\sum_{s\in \Lambda} H_{s+1}(x_1)= 0$ and $\sum_{s\in \Lambda} H_s(x_1)=2^{k}x_1-s_0$. Since $\Phi(\bx)\geq 0$ and $\supp \Phi(\cdot)\subset  \IId$ we obtain
	\begin{align*}
		f(\bx)
		=\Phi\big(2^{k}x_1-s_0,\bar{\bx}_1\big)=
		\sigma\big(\Phi\big(2^{k}x_1-s_0,\bar{\bx}_1\big) -\Phi\big(1,\bar{\bx}_1\big) \big) .
	\end{align*}  
	If $x_1\in [2^{-k}(s_0+2),2^{-k}(s_0+3)]$ for some $s_0\in \Lambda$ we have $\sum_{s\in \Lambda} H_s(x_1)=0$. Again from $\Phi(\bx)\geq 0$ and $\supp \Phi(\cdot)\subset						  \IId$ we get
	\begin{align*}
		f(\bx)=0=\sigma\bigg(\Phi\big(0,\bar{\bx}_1\big) -\Phi\bigg(1-\sum_{s\in \Lambda} H_{s+1}(x_1),\bar{\bx}_1\bigg) \bigg).
	\end{align*} 
	If $x_1\in [2^{-k}(s_0+1),2^{-k}(s_0+2)]$, $s_0\in \Lambda$, it is easy to see that $\sum_{s\in \Lambda} H_s(x_1)=1-\sum_{s\in \Lambda} H_{s+1}(x_1)$. Hence, the equality \eqref{eq-f(x)} holds. 	
	We have 
	$$
	H_s(x_1)=\sigma\big(1-\sigma\big(2^kx_1-s-1\big)-\sigma\big(s+1-2^{k}x_1\big)\big)
	$$
	for $s\in Z(k)\backslash\{2^k-1\}$ and $H_{2^{k}-1}(x_1)=\frac{1}{1-2^{-k}s}\sigma(x_1-2^{-k}s)$.

	Denote  the neural networks on the right side by $ \Phi(H_s)$. Then the functions $\sum_{s\in \Lambda} H_s(x_1)$ and $1-\sum_{s\in \Lambda} H_{s+1}(x_1)$ can be realized exactly by two networks $\Phi_1$ and $\Phi_2$ constructed by parallelization of $ \Phi_{H_s}$. By Lemma \ref{lem:parallel}, the length of $\Phi_1$ and $\Phi_2$ is 3 and their sizes are bounded $C|\Lambda|$. Since $\Phi_1(x_1)\geq 0$ and $\Phi_2(x_1)\geq 0$ when $x_1\in \II$, we can write
	$$f(\bx)=\sigma \big[\Phi\big(\sigma(\Phi_1(x_1)),\sigma(\sigma(\sigma(\bar{\bx}_1)))\big) -\Phi\big(\sigma(\Phi_2(x_1)),\sigma(\sigma(\sigma(\bar{\bx}_1)))\big) \big].$$
	Therefore, the network $\Phi_\Lambda$ is a concatenation of $\Phi_1$, $\Phi_2$, $ \sigma(\sigma(\sigma(\bar{\bx}_1)))$, and $\Phi$. 
	It is clear that we have the estimate
	$$
	W(\Phi_\Lambda)\leq C(d|\Lambda|+W(\Phi)) ,\qquad L(\Phi_\Lambda) \leq 4+L(\Phi). 
	$$
	
	{\it Case 2.} 
	The case $\Lambda\subset Z_i(\bk)$ for some $i\in \{0,1,2\}$ and $\Phi(\bx)$   changing sign when $\bx\in  \IId$. In this case, we write
	$\Phi(\bx)=\sigma\big(\Phi(\bx)\big) - \sigma\big(-\Phi(\bx)\big).$
	Hence
	$$
	f(\bx):=\sum_{s\in\Lambda  } \sigma \big(\Phi(2^{k}x_1-s,\bar{\bx}_1)\big)-\sum_{s\in\Lambda  }\sigma \big(-\Phi(2^{k}x_1-s,\bar{\bx}_1)\big),\quad \bx \in \IId.
	$$ 
	Applying the construction in Case 1 for each sum on the right side with $\Phi$ replaced by $ {\rm Id}(\sigma(\Phi(\cdot)))$ and ${\rm Id}(\sigma(-\Phi(\cdot)))$ respectively we obtain two neural networks $\Phi_\Lambda^+$ and $\Phi_\Lambda^-$. Here ${\rm Id}$ is the identity operator. Concatenating these two network by parallelization, see Lemma \ref{lem:parallel}, we obtain $\Phi_\Lambda$. Note that
	$$
	W\big({\rm Id}(\sigma(\Phi(\cdot)))\big)= W\big({\rm Id}(\sigma(-\Phi(\cdot)))\big) =W(\Phi)+1
	$$
	and
	$$
	L\big({\rm Id}(\sigma(\Phi(\cdot)))\big)= L\big({\rm Id}(\sigma(-\Phi(\cdot)))\big) =L(\Phi)+1.
	$$
	Therefore, the estimates \eqref{eq-Phi-Lambda} still hold true. 

	{\it Case 3.} General case.  We rewrite $f$ in the form:
	$$
	f(\bx)=\sum_{j=0,1,2}\sum_{s\in\Lambda\cap Z_j(k)} \Phi(2^{k}x_1-s, \bar{\bx}_1) .
	$$ 	
	To construct the network $\Phi_\Lambda$, we first  construct the network $\Phi_{\Lambda_j}$, $j=0,1,2$,  by using  the procedure in Case 2 to have that
	$$
	\Phi_{\Lambda_j}(\bx) = \sum_{s\in\Lambda\cap Z_j(k)} \Phi(2^{k}x_1-s, \bar{\bx}_1).
	$$
	Then by parallelizing  $(\Phi_{\Lambda_j})_{j=0,1,2}$ we obtain the network $\Phi_\Lambda$. From Lemma \ref{lem:parallel} we prove \eqref{eq-Phi-Lambda}.
	\hfill
\end{proof}

\begin{lemma} \label{prop-Phi-Skm}
Let $d,m\in \NN$, $d\geq 2$, $\bk\in \NN^d$, $\alpha\in (0,1]$ and $ \varepsilon\in (0,1)$. Assume that $\Phi_\varepsilon(S)$ is the neural network constructed in Lemma \ref{lem-Phi-S} to approximate $S\in \Ss^{\alpha,d}(m)$ with accuracy $\varepsilon$  and computation complexity as in \eqref{|S-Phi_S|} and \eqref{W(Phi_S)}. Then for  every $f\in \Uas$ we can explicitly construct a deep ReLU neural network $\Phi_\varepsilon\big({S}_{\bk,m}(f)\big)$  so that 
\begin{equation}\label{eq:approx-phi-skm}
	\big\| \Phi_\varepsilon\big({S}_{\bk,m}(f)\big) - {S}_{\bk,m}(f) \big\|_{\infty}
 \leq 2^{-\alpha |\bk|_1+d}  \varepsilon \,.
\end{equation}
Moreover,
\begin{equation}\label{eq:size-phi-skm}
\begin{aligned}
W( \Phi_\varepsilon\big({S}_{\bk,m}(f)\big))\leq Cd\bigg( 2^{|\bk|_1} +  \log d  2^{|\bk|_1-|\bk|_{\infty}}      N_d(m) 2^m \binom{m+d-1}{d-1} \log(d B^{d} \varepsilon^{-1})  \bigg), 
\end{aligned}
\end{equation}
and
\begin{equation}\label{eq:depth-phi-skm}
L\big( \Phi_\varepsilon\big({S}_{\bk,m}(f)\big)\big)\leq   C\log d \, N_d(m) 2^m\log(d B^{d} \varepsilon^{-1}),
\end{equation}
where $N_d(m)$ is given in Lemma \ref{lem:pattern} and  $B$ is given in Lemma \ref{thm-DT20}.
\end{lemma}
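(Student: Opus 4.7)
The plan is to exploit that although the sum in \eqref{eq-S-Tkf} has $2^{|\bk|_1}$ summands indexed by $\bs\in Z(\bk)$, each summand is a rescaled shifted copy of some element of the finite set $\Ss^{\alpha,d}(m)$, whose cardinality is at most $N_d(m)$. First I would choose $j_0\in\{1,\ldots,d\}$ with $k_{j_0}=|\bk|_\infty$, split $\bs=(s_1,\ldots,s_d)$ into $s_{j_0}$ and $\bs':=(s_j)_{j\neq j_0}$, and for each pattern $S\in\Ss^{\alpha,d}(m)$ and each $\bs'\in\prod_{j\neq j_0}Z(k_j)$ set
\[
\Lambda_{S,\bs'}:=\{s_{j_0}\in Z(k_{j_0}):\,S_m(T_{\bk,\bs}(f))=S\}.
\]
There are at most $N_d(m)\cdot 2^{|\bk|_1-|\bk|_\infty}$ nonempty such sets, and $\sum_{S,\bs'}|\Lambda_{S,\bs'}|=2^{|\bk|_1}$.

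For each pattern $S$ that actually appears, I would invoke Lemma~\ref{lem-Phi-S} to build one ``template'' network $\Phi_\varepsilon(S)$ with $\supp\Phi_\varepsilon(S)\subset\IId$, $\|S-\Phi_\varepsilon(S)\|_\infty\leq\varepsilon$, and size/depth given by \eqref{W(Phi_S)}. For each pair $(S,\bs')$ with $\Lambda_{S,\bs'}\neq\varnothing$ I would precompose $\Phi_\varepsilon(S)$ with the affine input map $x_j\mapsto 2^{k_j}x_j-s_j$ for $j\neq j_0$ and identity on the $j_0$-th slot, at the cost of only $O(d)$ additional weights and one extra linear layer, then apply Lemma~\ref{lem:concatinate} in direction $j_0$ at scale $2^{k_{j_0}}$ with index set $\Lambda_{S,\bs'}$ to obtain a subnetwork $\Psi_{S,\bs'}$ of size $C(d|\Lambda_{S,\bs'}|+W(\Phi_\varepsilon(S)))$ and depth $C+L(\Phi_\varepsilon(S))$ that realises
\[
\Psi_{S,\bs'}(\bx)=\sum_{s_{j_0}\in\Lambda_{S,\bs'}}\Phi_\varepsilon(S)(2^\bk\bx-\bs).
\]
Next, for each fixed pattern $S$, I would parallelize (Lemma~\ref{lem:parallel}) the $\Psi_{S,\bs'}$ over $\bs'$ to form $\Upsilon_S$, whose depth remains $O(L(\Phi_\varepsilon(S)))$ while its size is at most $Cd\sum_{\bs'}|\Lambda_{S,\bs'}|+C\cdot 2^{|\bk|_1-|\bk|_\infty}W(\Phi_\varepsilon(S))$. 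Finally, I would combine the at most $N_d(m)$ networks $\Upsilon_S$ with coefficient $2^{-\alpha|\bk|_1+d}$ via the special construction (Lemma~\ref{lem:special}); this yields $\Phi_\varepsilon(S_{\bk,m}(f))$, whose depth is $N_d(m)\cdot O(L(\Phi_\varepsilon(S)))$ but whose size grows only additively in $N_d(m)$.

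The error bound \eqref{eq:approx-phi-skm} follows at once: since $\supp\Phi_\varepsilon(S)\subset\IId$, only one shifted template is active on each cube $I_{\bk,\bs}=2^{-\bk}(\bs+\IId)$, so the pointwise discrepancy is at most $2^{-\alpha|\bk|_1+d}\varepsilon$. For the size bound \eqref{eq:size-phi-skm}, the ``shift'' contributions sum to $Cd\cdot 2^{|\bk|_1}$ since $\sum_{S,\bs'}|\Lambda_{S,\bs'}|=2^{|\bk|_1}$, while the ``template'' contributions amount to at most $N_d(m)\cdot 2^{|\bk|_1-|\bk|_\infty}$ copies of $W(\Phi_\varepsilon(S))$, and inserting \eqref{W(Phi_S)} produces the claimed bound. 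The depth bound \eqref{eq:depth-phi-skm} comes from the specialisation-across-patterns at the last step, which contributes a multiplicative factor of $N_d(m)$ to the base depth $L(\Phi_\varepsilon(S))$ given in \eqref{W(Phi_S)}.

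The main obstacle is that Lemma~\ref{lem:concatinate} collapses shifts only along a single coordinate, so a single template network cannot be reused across all of $Z(\bk)$ at once; choosing $j_0$ as the argmax of $k_j$ reduces the number of ``slices'' $\bs'$ to $2^{|\bk|_1-|\bk|_\infty}$ and produces the characteristic factor in \eqref{eq:size-phi-skm}, and the remaining $d-1$ coordinate shifts must be absorbed into a cheap affine input layer rather than by duplicating networks, otherwise the size would blow up by an extra factor of $2^{|\bk|_\infty}$. A secondary subtlety is calibrating the mix of parallelization (Lemma~\ref{lem:parallel}) and special construction (Lemma~\ref{lem:special}) at the final aggregation step so that both \eqref{eq:size-phi-skm} and \eqref{eq:depth-phi-skm} hold simultaneously: parallelization over $\bs'$ keeps the depth low per pattern, while special construction across patterns is what transfers the $N_d(m)$ count from the size bound into the depth bound.
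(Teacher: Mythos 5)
Your proposal is correct and follows essentially the same route as the paper: group the $2^{|\bk|_1}$ shifted copies by pattern in $\Ss^{\alpha,d}(m)$ and by the slice $\bs'$ transverse to the argmax coordinate, collapse the shifts along that coordinate with Lemma~\ref{lem:concatinate}, and then aggregate with a mix of parallelization and the special construction, using disjointness of supports for the error bound. The only (immaterial) difference is the order of the two outer aggregation steps — the paper applies the special construction over patterns first and then parallelizes over $\bar{\bs}_1$, while you parallelize over $\bs'$ first and apply the special construction over patterns last — and both orderings yield the same size and depth bounds.
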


\begin{proof}
We can assume without loss of generality that $k_1=|\bk|_{\infty}$. By the definition \eqref{eq-S-Tkf}, for  $f \in \Uas$ we have that
$$  
{S}_{\bk,m}(f)(\bx) :=
2^{-\alpha |\bk|_1 +d }\sum_{\bar{\bs}_1\in Z(\bar{\bk}_1)} \sum_{s_1\in Z(k_1)}{S}_m\bigs({{T}_{\bk,\bs}(f)}\bigs)\big(2^\bk\bx-\bs\big).
$$
We number the elements of  the set $\Ss^{\alpha,d}(m)$  from $1$ to $N_{d}(m)$ as $S_1,\ldots,S_{N_{d}(m)}$. 
 For $\bar{\bs}_1\in Z(\bar{\bk}_1)$ and $\eta = 1,\ldots,N_{d}(m)$,  we define
$$
\Lambda_\eta(\bar{\bs}_1):= \big\{s_1 \in Z(k_1): {S}_m\big({{T}_{\bk,\bs}(f)}\big) = S_\eta \in \Ss^{\alpha,d}(m) \big\}.
$$
Hence, we can write
$$  {S}_{\bk,m}(f)(\bx)=
2^{-\alpha |\bk|_1 +d }\sum_{\bar{\bs}_1\in Z(\bar{\bk}_1)} \sum_{\eta=1}^{N_{d}(m)}\sum_{s_1\in \Lambda_\eta(\bar{\bs}_1)}  S_\eta\big(2^{\bk}\bx-\bs\big).
$$
To approximate 
$
{S}_{\bk,m}(f)
$
we use  the  output
\begin{equation}\label{eq-output-PhiSkm}
\Phi_\varepsilon\big({S}_{\bk,m}(f)\big)(\bx)
:=
2^{-\alpha |\bk|_1 +d }\sum_{\bar{\bs}_1\in Z(\bar{\bk}_1)} \sum_{\eta=1}^{N_{d}(m)}\sum_{s_1\in \Lambda_\eta(\bar{\bs}_1)}  \Phi_\varepsilon(S_\eta)(2^{\bk}\bx-\bs\big)
\end{equation} 
of a deep ReLU neural network $\Phi_\varepsilon\big({S}_{\bk,m}(f)\big)$. Let us first   show explicitly how to   construct such a network $\Phi_\varepsilon\big({S}_{\bk,m}(f)\big)$ and then estimate  its size and  depth.  Denote  by $\Phi_{S_\eta,\bar{\bs}_1}$ the network constructed by adding a layer of $d$ nodes before the input layer of $\Phi_\varepsilon(S_\eta)$. Computations at nodes in the first layer of $\Phi_{S_\eta,\bar{\bs}_1}$ are $\sigma(x_1)$ and $\sigma(2^{k_j}x_j-s_j)$, $j=2,\ldots,d$. Then by \eqref{W(Phi_S)} we have
$$ 
W(\Phi_{S_\eta,\bar{\bs}_1}) \leq 2(d-1)+1 + 
W(\Phi_\varepsilon(S_\eta)) \leq CW(\Phi_\varepsilon(S_\eta))
$$
and
$$
L(\Phi_{S_\eta,\bar{\bs}_1}) \leq 1+L(\Phi_\varepsilon(S_\eta))\,.
$$
Since $\supp\Phi_\varepsilon({S_\eta})\subset  \IId$, we have  $\Phi_{S_\eta,\bar{\bs}_1}(\bx)=\Phi_\varepsilon(S_\eta)\big(x_1,2^{\bar{\bk}_1}\bar{\bx}_1-\bar{\bs}_1\big)$. Hence we can write
$$  \Phi_\varepsilon\big({S}_{\bk,m}(f)\big)(\bx)=
2^{d-\alpha |\bk|_1  }\sum_{\bar{\bs}_1\in Z(\bar{\bk}_1)} \sum_{\eta=1}^{N_{d}(m)}\sum_{s_1\in \Lambda_\eta(\bar{\bs}_1)}  \Phi_{S_\eta,\bar{\bs}_1}\big(2^{k_1}x_1-s_1, \bar{\bx}_1\big).$$
Applying Lemma \ref{lem:concatinate}   to the function $\sum_{s_1\in \Lambda_\eta(\bar{\bs}_1)}  \Phi_{S_\eta,\bar{\bs}_1}\big(2^{k_1}x_1-s_1, \bar{\bx}_1\big)$, we can explicitly construct  a network $\Phi_{\Lambda_\eta(\bar{\bs}_1)}$ with the output
$$
\Phi_{\Lambda_\eta(\bar{\bs}_1)}(\bx)=\sum_{s_1\in \Lambda_\eta(\bar{\bs}_1)}  \Phi_{S_\eta,\bar{\bs}_1}\big(2^{k_1}x_1-s_1, \bar{\bx}_1\big)
$$
 so that  its size and depth satisfy
\begin{align*}
W\big(\Phi_{\Lambda_\eta(\bar{\bs}_1)}\big)
&
\leq  C \big(d|\Lambda_\eta(\bar{\bs}_1)|+ W(\Phi_{S_\eta,\bar{\bs}_1})\big)
\leq  C \big(d|\Lambda_\eta(\bar{\bs}_1)|+ W(\Phi_\varepsilon(S_\eta))\big)
\end{align*} 
 and
$$
L(\Phi_{\Lambda_\eta(\bar{\bs}_1)})\leq CL(\Phi_{S_\eta,\bar{\bs}_1})  \leq CL(\Phi_\varepsilon(S_\eta)).
$$
 
Let $\Phi_{\bar{\bs}_1}$ be the special network combining $(\Phi_{\Lambda_\eta(\bar{\bs}_1)})_{\eta =1,\ldots,N_{d}(m)}$ with output 
$$
\Phi_{\bar{\bs}_1}(\bx)=\sum_{\eta=1}^{N_{d}(m)}\Phi_{\Lambda_\eta(\bar{\bs}_1)}(\bx).
$$ 
By  Lemmata \ref{lem:special} and \ref{lem-Phi-S}  its length is bounded as
\begin{align*}
L(\Phi_{\bar{\bs}_1}) \leq \sum_{\eta=1}^{N_{d}(m)}L\big(\Phi_{\Lambda_\eta(\bar{\bs}_1)}\big) \leq C\sum_{\eta=1}^{N_{d}(m)} L(\Phi_\varepsilon(S_\eta)) \leq C \log d  N_{d}(m) 2^m\log(d B^{d} \varepsilon^{-1})
\end{align*}
and its size
\begin{align*}
W(\Phi_{\bar{\bs}_1})
&\leq 
\sum_{\eta=1}^{N_{d}(m)}W\big(\Phi_{\Lambda_\eta(\bar{\bs}_1)}\big) + (d+1)L(\Phi_{\bar{\bs}_1})
\\
&\leq 
\sum_{\eta=1}^{N_{d}(m)} C  \Big(d|\Lambda_\eta(\bar{\bs}_1)|+ W(\Phi_\varepsilon(S_\eta))\Big) + (d+1)L(\Phi_{\bar{\bs}_1})
\\
&\leq 
 C \Bigg(\sum_{\eta=1}^{N_{d}(m)}d|\Lambda_\eta(\bar{\bs}_1)|+ (d\log d)N_d(m)2^m  \log(d B^{d} \varepsilon^{-1})\binom{m+d-1}{d-1}\Bigg)
\\
& \leq Cd \bigg(
2^{k_1} +     (\log d)   N_d(m) 2^m \binom{m+d-1}{d-1} \log(d B^{d} \varepsilon^{-1}) \bigg)\,.
\end{align*}
Since $\bx \in \IId$, the network $\Phi_{\bar{\bs}_1}$ can be  
transformed to a standard ReLU neural network with the same output and estimation for depth and size (by adjusting the constants), see \eqref{eq-special-l} and \eqref{eq-special-w}. We still denote this new network by $\Phi_{\bar{\bs}_1}$.

The network $\Phi_\varepsilon\big({S}_{\bk,m}(f)\big)$ is a parallelization of $\big(\Phi_{\bar{\bs}_1}\big)_{\bar{\bs}_1\in \bar{\bk}_1}$ which has output \eqref{eq-output-PhiSkm}
and by Lemma \ref{lem:parallel}
\begin{align*}
W\big( \Phi_\varepsilon\big({S}_{\bk,m}(f)\big)\big)
&
\leq 2\cdot 2^{|\bar{\bk}_1|_1} \max_{\bar{\bs}_1\in \bar{\bk}_1}  W(\Phi_{\bar{\bs}_1})
\\
&
\leq 
C d 2^{|\bar{\bk}_1|_1}\bigg(
2^{k_1} +     \log d   N_d(m) 2^m \binom{m+d-1}{d-1} \log(d B^{d} \varepsilon^{-1}) \bigg)
\end{align*}
and
\begin{align*}
L\big( \Phi_\varepsilon\big({S}_{\bk,m}(f)\big)\big) \leq \max_{\bar{\bs}_1 \in Z(\bar{\bk}_1)}L(\Phi_{\bar{\bs}_1}) \leq  C\log d \, N_d(m) 2^m\log(d B^{d} \varepsilon^{-1}).
\end{align*} 
 Thus, \eqref{eq:size-phi-skm} and \eqref{eq:depth-phi-skm} have been proven.
Next, we prove the estimate of the approximation error  \eqref{eq:approx-phi-skm}.
 Notice that by the assumptions of the lemma and Lemma \ref{lem-Phi-S} $\supp S  \subset\IId$ and $\supp \Phi_\varepsilon(S)\subset \IId$ for all $S\in \Ss^{\alpha,d}(m)$, and it holds the estimate \eqref{|S-Phi_S|}. Moreover,  for different pairs $(\bs,\eta)$ and $(\bs',\eta')$, the supports of the functions 
$(\Phi_\varepsilon(S_\eta)-S_\eta)\big(2^\bk\cdot-\bs\big)$ and $(\Phi_\varepsilon(S_{\eta'})-S_{\eta'})\big(2^\bk\cdot-\bs'\big)$ are disjoint. Hence, by  \eqref{|S-Phi_S|} we obtain 
\begin{align*}
\big\| \Phi_\varepsilon\big({S}_{\bk,m}(f)\big) - {S}_{\bk,m}(f) \big\|_{\infty}
&= 2^{d-\alpha |\bk|_1   }\Bigg \| \sum_{\bar{\bs}_1\in Z(\bar{\bk}_1)} \sum_{\eta=1}^{N_{d}(m)}\sum_{s_1\in \Lambda_\eta(\bar{\bs}_1)}
(\Phi_\varepsilon(S_\eta)-S_\eta)\big(2^\bk\cdot-\bs\big)\Bigg\|_{\infty}
\\
&= 2^{d-\alpha |\bk|_1   }\max_{\bar{\bs}_1\in Z(\bar{\bk}_1)} \max_{1 \le \eta \le N_{d}(m)}\max_{s_1\in \Lambda_\eta(\bar{\bs}_1)}
\big \| (\Phi_\varepsilon(S_\eta)-S_\eta)\big(2^\bk\cdot-\bs\big)\big\|_{\infty}
\\
&
\leq 2^{d-\alpha |\bk|_1}  \varepsilon
\end{align*}
which proves \eqref{eq:approx-phi-skm}.
\hfill
\end{proof}

 We are now in position to prove Theorem \ref{thm-deep-NN}.

\begin{proof} For convenience, we divide the proof into several steps. 
	 
	{\it Step 1. [Construction a deep ReLU neural networks $\Phi_{\varepsilon}(f)$]}.
	Let us recall our plan of the proof. 
To approximate  $f \in \Uas$, we will construct a deep ReLU neural network with an output of the form
	\begin{equation} \label{Phi(f)=}
\Phi_{\varepsilon}(f)	 \ = \ \Phi_{\varepsilon/2}\big({R}_n(f)\big)  \ + \  \Phi_{\varepsilon/2}\big(f - {R}_n(f)\big),
	\end{equation}
	%with $\varepsilon_1=\varepsilon_2=\varepsilon/2$,
	where $\Phi_{\varepsilon/2}\big({R}_n(f)\big)$ and   $\Phi_{\varepsilon/2}\big(f - {R}_n(f)\big)$ are deep ReLU neural networks approximating $R_n(f)$  and  $f - R_n(f)$ with accuracy $\varepsilon/2$, respectively.  
Then we have
	\begin{equation}\label{eq:f-r-app}
	\begin{aligned}
		\| f - \Phi_\varepsilon(f)\|_\infty 
		&\leq \big\| {R}_n(f) -  \Phi_{\varepsilon/2}\big({R}_n(f)\big) \big\|_{\infty} +  \big\|\big({f - R}_n(f)\big) - \Phi_{\varepsilon/2}\big({f - R}_n(f)\big)\big\|_\infty 
		\leq \varepsilon.
	\end{aligned}  
\end{equation}

For approximation of the first term ${R}_n(f)$, we take the deep ReLU neural network $\Phi_{\varepsilon/2}\big({R}_n(f)\big)$  which has been constructed in Lemma \ref{lem-Phi-Rnf}. Let us construct a deep ReLU neural network $\Phi_{\varepsilon/2}\big({f - R}_n(f)\big)$  for approximating $f - R_n(f)$ with accuracy $\varepsilon/2$.
	
	As noticed above, since  the difference  $f - R_n(f)$ is represented as in  \eqref{eq:representation}, we 
	shall explicitly construct deep ReLU neural networks $\Phi_{{{\varepsilon}'}}(F_{\bk_j})$ to approximate each term $F_{\bk_j}$ with  accuracy $\varepsilon'$ in the sum in \eqref{eq:representation}, where the value of $\varepsilon' $ will be chosen latter.
For ease of notation we consider the case $\supp(\bk_j) =j$  with $1\leq j\leq d-1$. The other cases are carried out similarly with a slight modification. From  \eqref{F_k_j}  we have 
\begin{align*}
F_{\bk_j} 
&
=\prod_{i=1}^j \big({T}_{(k_i-1)\bee^j}-{T}_{k_i\bee^j}\big) {T}_{(n+1-|\bk_j|_1)\bee^{j+1}} (f)
\\
&= \sum_{\bee \in \{0,1\}^j} (-1)^{|\bee|_1}  {T}_{\bk_j-\bee} {T}_{(n+1-|\bk_j|_1)\bee^{j+1}}(f)=\sum_{\bell\in \Lambda(\bk_j)} c_\bell T_\bell(f) \,,
\end{align*}
where
$$
\Lambda(\bk_j):=\Big\{\bell \in \NNd_0, \ \supp\bell\subset \{1,\ldots,j+1\}, \bell_j= \bk_j-\bee, \ell_{j+1}= n+1-|\bk_j|_1, \bee \in \{0,1\}^j\Big\}
$$
and $c_\bell$ is either $1$ or $-1$. It is easy to see that $|\Lambda(\bk_j)|\leq 2^j$ for all $\bk_j$ and if $\bell \in \Lambda(\bk_j)$ then $n+1-d\leq |\bell|_1\leq n+1$.

  We approximate $F_{\bk_j}$ by the output
\begin{equation*}
\Phi_{{{\varepsilon}'}}\big(F_{\bk_j}\big)(\bx):=  \sum_{\bell\in \Lambda(\bk_j)} c_\bell \Phi_{{{\varepsilon}'}} (S_{\bell,m}(f)),
\end{equation*} 
where  the networks $\Phi_{{{\varepsilon}'}}({S}_{\bell,m}(f))$  are constructed as in Lemma \ref{prop-Phi-Skm}. The network $\Phi_{{{\varepsilon}'}}\big(F_{\bk_j}\big)$ is a parallelization of  $\Phi_{{{\varepsilon}'}}({S}_{\bell,m}(f))$, $\bell \in \Lambda(\bk_j)$.

	We define  $\Phi_{\varepsilon/2}\big({f - R}_n(f)\big)$ as a deep ReLU neural network  with the output 
	\begin{equation}\label{eq-Phi-f}
	\Phi_{\varepsilon/2}\big({f - R}_n(f)\big):=  \sum_{j=0}^{d-1} \sum_{|\bk_j|_1\leq n}   \Phi_{{{\varepsilon}'}}\big(F_{\bk_j}\big)(\bx)
	\end{equation} 
	which is a 	parallelization of  $\Phi_{{ {\varepsilon}'}}\big(F_{\bk_j}\big)$, $|\bk_j|_1\leq n$, $j = 0, \ldots, d - 1$.
	It  approximates 	$f - R_n(f)$ with accuracy $\varepsilon/2$ by an appropriate choice of $\varepsilon'$.
	
	We put
	$$
  {{\varepsilon}'} =  B^{d}2^{-\alpha m}\binom{m+d}{d-1}
	$$
	with $m$ will be chosen later such that $\varepsilon' \in (0,1)$.
	We have from   \eqref{eq:Tkf} and Lemma \ref{prop-Phi-Skm}
\begin{equation*}\label{F_{k_j} - Phi}
	\begin{aligned}
		\big\| F_{\bk_j} - \Phi_{{{\varepsilon}'}}(F_{\bk_j}) \big\|_\infty 
		&\leq  \sum_{\bell \in \Lambda(\bk_j)} \Big( \big\| {T}_{\bell}(f)-{S}_{\bell,m}(f)\big\|_{\infty} + \big\| \Phi_{{ {\varepsilon}'}}\big(S_{\bell,m}(f)\big)-{S}_{\bell,m}(f)\big\|_{\infty}\Big)
		\\
	& \leq  \sum_{\bell \in \Lambda(\bk_j)}  
	\bigg(  (2B)^{d} \big(2^m 2^{|\bell|_1}\big)^{-\alpha}\binom{m+d}{d-1} + 2^{-\alpha |\bell|_1+d}    { {\varepsilon}'}\bigg)
	\\
	&
	\leq \sum_{\bell \in \Lambda(\bk_j)} 
	\bigg(  (2B)^{d}  \big(2^m 2^{n+1-d}\big)^{-\alpha}\binom{m+d}{d-1} + 2^{-\alpha (n+1-d)+d}    { {\varepsilon}'}\bigg)
	\\
	& \leq   2^{j+1-\alpha}(2^{\alpha+1} B)^{d}   \big(2^m 2^{n}\big)^{-\alpha}\binom{m+d}{d-1}. 
	\end{aligned}  
\end{equation*}
This leads to
\begin{equation*}
\begin{aligned}  
 \big\|\big({f - R}_n(f)\big) - \Phi_{\varepsilon/2}\big({f - R}_n(f)\big)\big\|_\infty 
&  \le \sum_{j=0}^{d-1} \sum_{|\bk_j|_1\leq n}  \big\|  \Phi_{{{\varepsilon}'}}\big(F_{\bk_j}\big)-F_{\bk_j}\big\|_{\infty}\\
 & \leq  \sum_{j=0}^{d-1} \sum_{|\bk_j|_1\leq n}  2^{j+1-\alpha}(2^{\alpha+1} B)^{d}   \big(2^m 2^{n}\big)^{-\alpha}\binom{m+d}{d-1}
 \\
  & \leq  \sum_{j=0}^{d-1} 
 2^j \binom{n+j}{j}  2^{1-\alpha}(2^{\alpha+1} B)^{d}   \big(2^m 2^{n}\big)^{-\alpha}\binom{m+d}{d-1}
 \\
 & \leq 2^{1-\alpha}(2^{\alpha+2} B)^{d}   \big(2^m 2^{n}\big)^{-\alpha}\binom{m+d}{d-1}\binom{n+d-1}{d-1}  =:A_{n,m}.
\end{aligned}  
\end{equation*}
In the next step, our task is to choose  $n,m$ (and therefore, $\varepsilon'$) depending on $\varepsilon$ such that $A_{n,m} \le \varepsilon/2$. Then we define the  deep ReLU neural network $\Phi_{\varepsilon}(f)$ as a parallelization of  the networks $ \Phi_{\varepsilon/2}\big({R}_n(f)\big) $ and $	\Phi_{\varepsilon/2}({f - R}_n(f))$ with the output \eqref{Phi(f)=}. From this \eqref{eq:f-r-app} follows. The size and depth of $\Phi_{\varepsilon}(f)$  are estimated explicitly in $d$ and $\varepsilon$ from the estimation of sizes and depths of $\Phi_{\varepsilon/2}\big({R}_n(f)\big)$ and $\Phi_{{{\varepsilon}'}}(F_{\bk_j})$ by the choice of $m,n$.

{\it Step 2. The choices of $\varepsilon_0$ and $n,\, m$.} Define $m_0\geq d$ as the smallest integer such that $   B^{d} 2^{-\alpha m_0}\binom{m_0+d}{d-1}<1$. Denote $n_0\in \NN$ from which the function 
	\begin{equation}\label{eq-h(n)}
	h(n):= K_{d,\alpha} 2^{-\alpha  n} n^{d-1-\alpha} (\log n)^{(\alpha+1)(d-1)}   , 
	\end{equation}
	where 
	\begin{equation}\label{eq-Kdalpha}
K_{d,\alpha}:=2 (2^{\alpha+2} B)^{d}  (4d \log 3)^\alpha \bigg(\frac{2^{d-1}}{(d-1)!}\bigg)^{\alpha+2}
	\end{equation}
	 is decreasing and $h(n-1)\leq 2^{-\alpha n/2}$ for all $n\geq n_0$. We put $n_1 = \lfloor (8d\log3)2^{m_0}\binom{m_0+d-1}{d-1} \rfloor +1$ and define $\varepsilon_0 =\min\{h(n_0),h(n_1),1/2\}$. For $\varepsilon\in (0,\varepsilon_0)$ we choose $n\in \NN$, $n\geq \max\{n_0,n_1\}$, such that $h(n)\leq \varepsilon/2<h(n-1)$ and then $m$ such that
	\begin{equation}\label{eq-Mdmm}
	( \log d)3^{2^{m+1}\binom{m+d-1}{d-1}} 2^m \binom{m+d-1}{d-1}m\leq  2^{\frac{n}{d}}<  	( \log d) 3^{2^{m+2}\binom{m+d}{d-1}} 2^{m+1} \binom{m+d}{d-1}(m+1).
	\end{equation}
These choices imply
	$$
	 3^{2^{m+1}\binom{m+d-1}{d-1}}  \leq  2^{\frac{n}{d}}<    3^{2^{m+3}\binom{m+d}{d-1}} 
	$$
and
	\begin{equation}\label{eq-m-n}
	2^{m+1}\binom{m+d-1}{d-1} \log 3 < \frac{n}{d} < (8 \log 3) 2^{m}\binom{m+d}{d-1}\quad \text{and}\quad m\leq \log n.
\end{equation}
Since  $n\geq (8d\log 3)2^{m_0}\binom{m_0+d}{d-1} $   we get $m\geq m_0\geq d$ and
\begin{align*}
A_{n,m}
& \leq   2^{1-\alpha}(2^{\alpha+2}B)^{d}    2^{-n\alpha}\bigg[dn^{-1}(8 \log 3) \binom{m+d}{d-1}\bigg]^{\alpha}\binom{m+d}{d-1} \binom{n+d-1}{d-1}
\\
& \leq 
2(2^{\alpha+2}B)^{d}       (4d \log 3)^\alpha   \bigg(\frac{2^{d-1}}{(d-1)!}\bigg)^{\alpha+2} 2^{-\alpha n } n^{d-1-\alpha} m^{(\alpha+1)(d-1)} \\
&   =h(n)
\leq \frac{\varepsilon}{2}.
\end{align*} 

{\it Step 3. Estimating the size and depth of $\Phi_{{{\varepsilon}'}} (S_{\bell,m}(f))$.} From  $n+1-d\leq |\bell|_1\leq n+1$ we have
$$
|\bell|_1-|\bell|_{\infty} \leq n+1-\frac{n+1-d}{d}\leq n-\frac{n}{d}+2
$$
which by 
\eqref{eq:size-phi-skm} leads to
\begin{equation*}
\begin{aligned}
W\big( \Phi_{{ {\varepsilon}'}}({S}_{\bell,m}(f))\big) 
& \leq  Cd\bigg( 2^{n} +  (\log d) 2^{n-\frac{n}{d}+m} \binom{m+d-1}{d-1} \log(d B^{d} {\varepsilon'}^{-1})  N_d(m) \bigg).
\end{aligned}
\end{equation*}
Note that by the choice of $\varepsilon'$  we get
$$
\log(d B^{d} {\varepsilon'}^{-1}) \leq \log\bigg(d 2^{\alpha m} \binom{m+d}{d-1}^{-1}\bigg) \leq \alpha m.
$$
It yields from \eqref{eq-Mdmm}
$$
(\log d)2^m \binom{m+d-1}{d-1}m N_d(m) \leq  3^{2^{m+1}\binom{m+d-1}{d-1}} 2^m \binom{m+d-1}{d-1}m\leq  2^{\frac{n}{d}}.
$$
Consequently
\begin{equation}\label{eq-phi-epsilon1}
	\begin{aligned}
		W\big( \Phi_{{{\varepsilon}'}}({S}_{\bell,m}(f))\big) 
		& \leq    C_\alpha d  2^{n}.
	\end{aligned}
\end{equation}
Similarly, we have
\begin{align*}
L\big( \Phi_{{{\varepsilon}'}}({S}_{\bell,m}(f))\big)
&\leq C \log d  \, N_d(m)2^m \log(d B^{d} {\varepsilon'}^{-1})
\leq C_\alpha (\log d)  3^{2^{m+1}\binom{m+d-1}{d-1}} 2^m m  \leq C_\alpha 2^{\frac{n}{d}} .
\end{align*}

{\it Step 4. Estimation of the size and depth of $\Phi_\varepsilon(f)$}.
 We recall that $\Phi_{\varepsilon/2}(f - R_n(f))$ is the network obtained by  parallelization of $\Phi_{{{\varepsilon}'}}({S}_{\bell,m}(f))$ with $\bell$ in the multi-set
$$
\Lambda=\big\{ \bell\in \Lambda(\bk_j),\ j=0,\ldots,d-1,\ |\bk_j|_1\leq n\big\} 
$$ and has the output equal to  the double sum on the right side of \eqref{eq-Phi-f}. We have
$$
|\Lambda| \leq  \sum_{j=0}^{d-1} \sum_{|\bk_j|_1\leq n} 2^j  =\sum_{j=0}^{d-1}   2^j\binom{n+j}{j} \leq 2^d\binom{n+d-1}{d-1}.
$$
The network $\Phi_{\varepsilon}(f)$ is a parallelization of $\Phi_{\varepsilon/2}({R}_n (f))$ and $\Phi_{\varepsilon/2}(f - R_n(f))$. Therefore, by Lemma \ref{lem:parallel} and the construction of  $\Phi_{\varepsilon/2}({R}_n (f))$ and  $\Phi_{\varepsilon/2}(f - R_n(f))$ we obtain
\begin{align*} 
W(\Phi_\varepsilon(f))
& \leq C\max\big\{W( \Phi_{\varepsilon/2}({R}_n (f))), W(\Phi_{\varepsilon/2}(f - R_n(f)) \big\}
\\
& 
\leq  C\max\bigg\{ W( \Phi_{\varepsilon/2}({R}_n (f))) \, , 2^d \binom{n+d-1}{d-1}\max_{\bell\in \Lambda}  W(  \Phi_{\varepsilon'}({S}_{\bell,m}(f)))\bigg\}.
\end{align*}

From Lemma \ref{lem-Phi-Rnf} and \eqref{eq-phi-epsilon1} we deduce that
\begin{align*} 
	W(\Phi_\varepsilon(f))
	&
	\leq  C_\alpha \max\bigg\{ d 2^{n}   \log(d B^{d}2 \varepsilon^{-1})  \binom{n+d-1}{d-1},  d 2^d 2^n\binom{n+d-1}{d-1}\bigg\}.
\end{align*}
Since $h(n)\leq \varepsilon/2$, from \eqref{eq-h(n)} by simple calculation we get
$$\log(dB^d\varepsilon^{-1}) \leq C_\alpha (d+ n) \leq C_\alpha dn $$
which implies
\begin{align*} 
	W(\Phi_\varepsilon(f))
\leq 
	C_\alpha d 2^d 2^{n}  n^d   \dfrac{2^{d-1}}{(d-1)!}\,.
\end{align*}
From $\varepsilon/2 \leq  h(n-1)\leq 2^{-\alpha n/2}$ we deduce  $   n \leq 2\alpha^{-1}\log( 2\varepsilon^{-1})$ and by $\eqref{eq-h(n)}$
\begin{equation}\label{eq-n}
2^{n}
\leq C_\alpha \Big(K_{d,\alpha} 2\varepsilon^{-1} n^{d-1-\alpha} (\log n)^{(\alpha+1)(d-1)} \Big)^{\frac{1}{\alpha}}.
\end{equation}
Consequently
\begin{align*} 
W(\Phi_\varepsilon(f))
& \leq  C_\alpha d2^d\frac{2^{d-1}}{(d-1)!} \Big(K_{d,\alpha} \varepsilon^{-1}  (n\log n)^{(\alpha+1)(d-1)} \Big)^{\frac{1}{\alpha}}
\\
&
\leq C_\alpha d 2^d  \frac{2^{d-1}}{(d-1)!}  {(K_{d,\alpha})}^{\frac{1}{\alpha}}  \varepsilon^{-\frac{1}{\alpha}} 
\Big(\log \big(2\alpha^{-1}\log( 2\varepsilon^{-1})\big) 2\alpha^{-1}\log( 2\varepsilon^{-1})\Big)^{(1+\frac{1}{\alpha})(d-1)}.
\end{align*}
We use the inequalities  with $p:=(1+\frac{1}{\alpha})(d-1)\geq 1$, $\varepsilon \in (0,1/2)$
\begin{equation}\label{eq-p}
\begin{aligned}
\big[\log \big(2\alpha^{-1}\log( 2\varepsilon^{-1})\big)\big]^{p}
& =\big[\log (2\alpha^{-1})+\log\log( 2\varepsilon^{-1})\big]^{p}
\\
&\leq \big[2\log \big(2\alpha^{-1})\big(\log\log( 2\varepsilon^{-1})\big)\big]^p
\end{aligned}
\end{equation}
to obtain
\begin{align*} 
	W(\Phi_\varepsilon(f))
	&
	\leq C_\alpha d 2^d  \frac{2^{d-1}}{(d-1)!}  {(K_{d,\alpha})}^{\frac{1}{\alpha}}  \varepsilon^{-\frac{1}{\alpha}} 
	\Big(4\alpha^{-1}\log (2\alpha^{-1})\log\log (2\varepsilon^{-1})\big)  \log( 2\varepsilon^{-1})\Big)^{(1+\frac{1}{\alpha})(d-1)}.
\end{align*}
Replacing $K_{d,\alpha}$ by the right-hand side of  \eqref{eq-Kdalpha}, we prove \eqref{size}.

Now we estimate the depth of $\Phi_\varepsilon(f)$. By applying Lemmata \ref{lem:special}, \ref{lem-Phi-Rnf} and by the construction of  $\Phi_{\varepsilon/2}(f - R_n(f))$  we have that
\begin{align*}
L(\Phi_\varepsilon(f)) 
&
=  C\max\big\{ L\big( \Phi_{\varepsilon/2}\big({R}_n(f)\big)\big),L\big( \Phi_{\varepsilon/2}(f - R_n(f)) \big) \big\}
\\
&
\leq C\max\big\{  \log d  \log(d B^{d} (\varepsilon/2)^{-1}),\max_{\bell\in \Lambda}L\big( \Phi_{\varepsilon'}({S}_{\bell,m}(f))\big) \big\}
\\
&\leq C_\alpha  \max\big\{\log d  \log(d B^{d} \varepsilon^{-1}) ,2^{\frac{n}{d}}\big\}
\\
&\leq C_\alpha  \max\big\{(d\log d)  n ,2^{\frac{n}{d}}\big\}.
\end{align*}
By \eqref{eq-Mdmm} and \eqref{eq-m-n} it is easily seen that $2^{\frac{n}{d}}$ dominates $(d\log d)  n$. 
From \eqref{eq-n}, the inequality $  n \leq 2\alpha^{-1}\log( 2\varepsilon^{-1})$, and \eqref{eq-p} we get
\begin{align*}
	2^{\frac{n}{d}}
	&
	\leq C_\alpha \Big(K_{d,\alpha} \varepsilon^{-1} n^{d-1-\alpha} (\log n)^{(\alpha+1)(d-1)} \Big)^{{1}/{(d\alpha)}}
	\\
& 	\leq C_\alpha \Big(K_{d,\alpha} \varepsilon^{-1}  (2\alpha^{-1}\log( 2\varepsilon^{-1}))^{d-1-\alpha} (\log (2\alpha^{-1}\log( 2\varepsilon^{-1})))^{(\alpha+1)(d-1)} \Big)^{{1}/{(d\alpha)}}
	\\
& 	\leq C_\alpha \Big(K_{d,\alpha} \varepsilon^{-1}  (2\alpha^{-1}\log( 2\varepsilon^{-1}))^{d-1-\alpha} (2\log (2\alpha^{-1})\log\log( 2\varepsilon^{-1})))^{(\alpha+1)(d-1)} \Big)^{{1}/{(d\alpha)}}\,.
\end{align*}
In view of \eqref{eq-Kdalpha}, we find that 
$$
\Big(K_{d,\alpha}   (2\alpha^{-1} )^{d-1-\alpha} (2\log (2\alpha^{-1}) )^{(\alpha+1)(d-1)} \Big)^{{1}/{(d\alpha)}}
$$
is bounded by a constant depending only on $\alpha$. Consequently
\begin{align*}
	L(\Phi_\varepsilon(f)) 
	&
 \leq C_\alpha   \varepsilon^{-\frac{1}{d\alpha}} (\log (2\varepsilon^{-1}))^{\frac{d-1-\alpha}{d\alpha}}  (\log\log(2\varepsilon^{-1}))^{\frac{(\alpha+1)(d-1)}{d\alpha}} 
\end{align*}
which proves \eqref{depth}.
\hfill
\end{proof}

\section{An application to numerical solving PDEs}\label{sec-app}

In this section, we apply the results on approximation by deep ReLU neural networks in Sections \ref{sec-linear-method} and \ref{sec-adaptive-method} for numerical approximation of the solution to elliptic PDEs. 	

Consider a modeled diffusion elliptic equation 
	\begin{equation*}
	- {\rm div} (a(\bx)\nabla u(\bx))
	\ = \
	f(\bx) \quad \text{in} \quad \IId,
	\quad u|_{\partial \IId} \ = \ 0, 
	\end{equation*}
	with a  function $f$ and a diffusion coefficient $a$ having sufficient regularity.
	Denote by $V:= H^1_0(\IId) = \mathring{W}^1_2(\IId)$ the energy space. If $a$ satisfies the ellipticity assumption
	\begin{equation} \nonumber
	0<a_{\min} \leq a(\bx) \leq a_{\max}<\infty,\ \forall \bx \in \IId,
	\end{equation}
	by the well-known Lax-Milgram lemma, there exists a unique solution $u \in V$ in weak form which satisfies the variational equation
	\begin{equation} \nonumber
	\int_{\IId} a(\bx)\nabla u(\bx) \cdot \nabla v(\bx) \, \text{d} \bx
	\ = \
	\int_{\IId}  f(\bx)  v(\bx) \, \text{d} \bx\,,  \quad \forall v \in V.
	\end{equation}
	%For a finite dimensional subspace $V_h \subset V$, let $u_h \in V_h$ be the Galerkin approximation to the solution $u$, i.e., $u_h$ is the solution to the variational equation
	%\begin{equation} \nonumber
	%\int_{\IId} a(\bx)\nabla u_h(\bx) \cdot \nabla v_h(\bx) \, \text{d} \bx
	%\ = \
	%\int_{\IId}  f(\bx)  v_h(\bx) \, \text{d} \bx  \quad \forall v_h \in V_h.
	%\end{equation}
	%Then by the well-known C\'ea's lemma for  the error of the approximation of $u$ by $u_h$, its holds the inequality
	%\begin{equation} \nonumber
	%\|u - u_h\|_V
	%\ \le \ M \inf_{v_h \in V_h} \|u - u_h\|_V, 
	%\end{equation}
	%where $M:= \sqrt{\frac{a_{\max}}{a_{\min}}}$.
	
We want to approximate the solution $u$ by deep ReLU neural networks. The approximation error is measured in the norm of $L_\infty(\IId)$.
Assume for the modeled case that $a$ and $f$ have H\"older-Nikol'skii mixed smoothness $1$, i.e., $a,f \in H^1_\infty(\IId)$. Then, the solution $u$ has at least mixed derivatives $\partial^{\balpha}u$ with $\balpha\in \NN_0^d$, $\max_{j=1,\ldots,d}\alpha_j \leq 1$,  belonging to $L_2(\IId)$ \cite{GK09}, and therefore, by embedding for function spaces of mixed smoothness, see \cite[Theorem 2.4.1]{ST87B}, $u$ belongs to  $\mathring{H}^{1/2}_\infty(\IId)$.
For simplicity we assume that $ u \in \mathring{U}^{1/2}_\infty$. 
	
	For the nonadaptive approximation, according to Theorem \ref{thm-deep-NN1},  for any $\varepsilon > 0$ sufficient small 
	one can  explicitly construct a deep neural network architecture ${\mathbb A}_\varepsilon$ independent of $f$ and $a$, and  a deep ReLU neural network  $\Phi_\varepsilon(u)$ having the architecture ${\mathbb A}_\varepsilon$ such that	
	$$\|u-\Phi_\varepsilon(u)\|_\infty \leq \varepsilon,$$
	\begin{equation*}
	W({\mathbb A}_\varepsilon)\leq C d   \bigg( \frac{K_1^d}{(d-1)!}\bigg)^3 \varepsilon^{-2} \log(2\varepsilon^{-1})^{3(d-1)+1}, 
	\end{equation*}
	and
	$$
	L({\mathbb A}_\varepsilon) \leq C \log d \log(2\varepsilon^{-1}),
	$$
where $K_1:= 8(\sqrt 2 +1)^{3/2}$.

	For the adaptive approximation, according to Theorem \ref{thm-deep-NN},  for any $\varepsilon > 0$ sufficient small 
one can  explicitly construct an adaptive deep ReLU neural network $\Phi_\varepsilon(u)$ so that
\begin{equation*} 
\|u-\Phi_\varepsilon(u)\|_{\infty} \leq \varepsilon,
\end{equation*}
\begin{equation*}
W(\Phi_\varepsilon(u))\leq C d^2 \bigg(\frac{K_2^d}{(d-1)!}\bigg)^6
 \varepsilon^{-2}    \big(\log(2\varepsilon^{-1})\log  \log (2\varepsilon^{-1})\big)^{3(d-1)},
\end{equation*}
and
\begin{equation*} 
 L(\Phi_\varepsilon(u)) \leq C' \varepsilon^{-\frac{2}{d}} (\log (2\varepsilon^{-1}))^{\frac{2d-3}{d}}  (\log\log(2\varepsilon^{-1}))^{\frac{3(d-1)}{d}} ,
\end{equation*}
where $K_2 :=  16((2 + \sqrt 2))^{1/3}$.

	%Consider the finite-dimensional space $V_{h(m)} = \Ff_\beta(m)$ given as in \eqref{Ff(m)} as a finite element space for  numerical approximation of the solution $u$. Then due to the inclusion $R_\beta(m,u) \in V_{h(m)} $ we have that 
	%\begin{equation} \nonumber
	%\|u - u_{h(m)}\|_V
	%\ \le \ M \|u - R_\beta(m,u)\|_V, 
	%\end{equation}
%This example can be extended to the case when the functions {\red $f$ and $a$ belong to $1/2 < \alpha < 1$???}.

\section{Conclusions}\label{sec:conclusion}
 We have presented both nonadaptive and adaptive methods for explicit construction of deep ReLU neural network $\Phi_\varepsilon(f)$ having an output that approximates functions  $f$ in the H\"older-Nikol'skii spaces with an arbitrary  prescribed accuracy $\varepsilon$ in the  $L_\infty$-norm. Nonadaptivity means that architecture of approximating deep ReLU neural networks is the same for all functions in $\Uas$. 
 For nonadaptive approximation, by using truncation of Faber series as a intermediate approximation, we have established  a dimension-dependent estimate for the computation complexity characterized by   the size $W(\Phi_\varepsilon(f))$ estimated by    
$$
	W(\Phi_\varepsilon(f))\leq C_\alpha d   \bigg( \frac{K_1^d}{(d-1)!}\bigg)^{\frac{1}{\alpha}+1} \varepsilon^{-\frac{1}{\alpha}} \log(2\varepsilon^{-1})^{(d-1)(\frac{1}{\alpha}+1)+1}, 
$$
where $K_1=B^{1/(\alpha+1)}4\alpha^{-1} $ with $B=(2^\alpha-1)^{-1}$. 

Concerning adaptive method,  for any $f\in \Uas$ we explicitly construct a deep ReLU neural network $\Phi_\varepsilon(f)$ of adaptive architecture having  the output  that approximates $f$ in  the $L_\infty(\IId)$-norm with a prescribed accuracy $\varepsilon$ and having   the size   estimated by 
$$
	W(\Phi_\varepsilon(f))\leq C_{\alpha} d^2\bigg(\frac{K_2^d}{(d-1)!}\bigg)^{\frac{2}{\alpha}+2} \varepsilon^{-
		\frac{1}{\alpha}}    \big(\log(2\varepsilon^{-1})\log  \log (2\varepsilon^{-1})\big)^{(1+\frac{1}{\alpha})(d-1)},
$$
where $K_2=4(2^{\alpha+3}B)^{\frac{1}{2\alpha+2}} (\alpha^{-1}\log(2\alpha^{-1}) )^{1/2}$. 

Construction of deep neural networks in the adaptive method is more involved but improves $\log (2\varepsilon^{-1})$  in the computation complexity of the approximating deep ReLU neural networks compared to the nonadaptive one. 

 Our theory is illustrated by an application to numerical approximation of the solution to elliptic PDEs.

\noindent
{\bf Acknowledgments.}  This work is funded by Vietnam National Foundation for Science and Technology Development (NAFOSTED) under  Grant No. 102.01-2020.03. A part of this work was done when  Dinh D\~ung and Van Kien Nguyen were working at the Vietnam Institute for Advanced Study in Mathematics (VIASM). They would like to thank  the VIASM  for providing a fruitful research environment and working condition.

\bibliographystyle{abbrv}

\bibliography{AllBib.bib}

\end{document}